\documentclass[reqno]{amsart} 

\usepackage{amsfonts, amsmath, amsthm, amssymb, latexsym, graphicx, geometry, xcolor, colortbl,  mathtools,  mathrsfs}

\theoremstyle{plain}
\newtheorem{theorem}{Theorem}[section]
\newtheorem{corollary}[theorem]{Corollary}
\newtheorem{lemma}[theorem]{Lemma}
\newtheorem{proposition}[theorem]{Proposition}
\newtheorem{def-theorem}[theorem]{Definition and Theorem}
\theoremstyle{definition}
\newtheorem{definition}[theorem]{Definition}

\newtheorem{remark}[theorem]{Remark}

\newtheorem{example}[theorem]{Example}

\numberwithin{equation}{section}

\newcommand{\R}{\mathbb{R}}

\newcommand{\N}{\mathbb{N}}
\newcommand{\Z}{\mathbb{Z}}
\newcommand{\ra}{\rightarrow}
\newcommand{\xra}{\xrightarrow}

\newcommand{\F}{\mathbb{F}}

\title[Freezing limits for Bessel functions and Bessel processes]{Some freezing limits for Bessel functions and Bessel processes with drift of type $B_N$}

\author{Jan Richter, Michael Voit} 
\address{Fakult\"at Mathematik, Technische Universit\"at Dortmund,
          Vogelpothsweg 87,
          D-44221 Dortmund, Germany}
\email{jrichter@mathematik.tu-dortmund.de, michael.voit@math.tu-dortmund.de}

\subjclass[2010]{Primary 33C52; Secondary 33C67, 33C10, 43A90,  60F05, 60K35, 82C22}
\keywords{Multivariate Bessel functions, Dunkl kernels, Bessel processes with drift,
  freezing limits, radial parts of Brownian motions with drift, limits for Jack polynomials.}

\begin{document}
\date{\today}

\begin{abstract}
  We use the series representation of the Bessel functions of type $B_N$ in terms of Jack polynomials and show that
  $$\lim_{k_1\to\infty} J_{(k_1,k_2)}^B(x,k_1y)^{1/k_1}= 2^N\prod\limits_{i=1}^N
  \Bigg( e^{\sqrt{1+x_i^2 y_i^2}-1}\cdot \frac{1}{\sqrt{1+x_i^2 y_i^2}+1} \Bigg)$$
  for  $x,y\in\mathbb R^N$ and $k_2\ge0$.
  Moreover, the known Laplace-type integral representations for $N\ge1$ and $k_2=0,1/2,1,2$
and for $N=2$ and $k_2>0$ by R\"osler and Demni respectively lead to related limits for
$$\lim_{k_1 \to \infty} \partial_{x_j} J_{(k_1,k_2)}^B(x,k_1y) /(k_1 \cdot J_{(k_1,k_2)}^B(x,k_1 y)) \quad (j=1,\ldots,N).$$
  These limits lead to weak limit results for the associated Bessel processes with drift.
  For $k_2=1/2,1,2$, these limit results have applications to radial parts of Brownian motions with drift on the
  $M\times N$-dimensional matrices over $\mathbb R,\mathbb C$, and the quaternions for $M\to\infty$.
  We also discuss these limits in the Dunkl case $N=1$.
\end{abstract}

\maketitle

\section{Introduction}

Several weak freezing limit theorems for Bessel processes on closed Weyl chambers $C_N\subset\mathbb R^N$
associated with the root system $A_{N-1}$ and $B_N$ were given in \cite{AKM1, AKM2, AM, AnV2} where these results partially
are based on limit results for the associated Bessel functions. The notion ``freezing limit'' here means that limits for
large  parameters $k\to\infty$ are considered where these parameters have the interpretation
of inverse temperatures for the particle systems described by these Bessel processes.
Further related freezing limit results like central limit theorems can be found
in \cite{DE, V1, AHV, GK}, where in all papers above the starting points are
fixed points $x\in C_N$. These multivariate Bessel processes
 are closely related to classical random matrix models;
 in particular the Bessel processes of type $A_{N-1}$ are the well-known Dyson Brownian motions;
 see e.g. \cite{AGZ, Kat, F}.
Furthermore, for starting points of the form $k\cdot x$ with $x\in C_N\setminus\{0\}$,
 freezing results are given in \cite{AnV1, VW} by using the stochastic differential equations of the Bessel processes.
One might ask whether it is possible to derive such limit theorems directly from the known transition probabilities
of the Bessel processes in an analytic way by using limits for the Bessel functions.
This demand on freezing-type limit results for Bessel functions
is supported by the fact that, besides the usual Bessel processes
on $C_N$, there exists the notion of  Bessel processes with drift vectors on $C_N$ in \cite{V2}.
This notion is quite natural, as for instance the usual radial parts of  Brownian motions on $\mathbb R^d$ with arbitrary
drift vectors are examples of one-dimensional Bessel processes with drift; see e.g. \cite{PY}. Moreover,
the ordered spectra of Wishart processes with drift on the $N$-dimensional positive semidefinite matrices
as in \cite{DDMY} are Bessel processes with drift of type $B_N$ on the Weyl chamber
$$C_N^B:=\{x\in \mathbb R^N: \quad x_1\ge x_2\ge\ldots\ge x_N\ge 0 \},$$
and the ordered spectra of
Brownian motions with arbitrary drift vectors on the vector spaces of  $N$-dimensional Hermitian matrices are
Bessel processes with drift of type $A_{N-1}$; see \cite{AuV, V2}.
In the following we refer to \cite{R2, RV} for Bessel processes associated with  root systems without drift,
to \cite{CGY} for stochastic analysis of these processes, and
to \cite{D, R3, A} for
the underlying Bessel functions. We recapitulate the following definition from \cite{V2}.

\begin{definition}\label{def-bessel-p-drift}
  Let $R$ be a root system on $\mathbb R^N$, $R_+$ some subset of positive roots, $W$ the associated reflection group,
  and $C_N$ the closed Weyl chamber associated with $R_+$. Let $ k\ge0$ be some multiplicity function, and
 let $w\in C_N$ be a ``drift vector''. Then the operator
   \begin{equation}\label{Bessel-laplace-drift-gen}
  L_k^w f(x):= \frac{1}{2}\Delta f(x)  + \sum_{\alpha\in R_+}
k(\alpha)
\frac{\langle\nabla f(x),\alpha\rangle}{\langle\alpha ,x\rangle}+\frac{\langle \nabla_x J_k(x,w), \nabla f(x)\rangle }{J_k(x,w)}
   \end{equation}
   for $W$-invariant $f\in C^{(2)}(\mathbb R^N)$ is the generator of a Feller diffusion on $C_N$ with reflecting boundaries.
   These diffusions have the transition densities
   \begin{equation}\label{density-transition-bessel-drift-gen}
  K_t^{w,k}(x,A)=\frac{|W|\cdot c_k\cdot e^{-\|w\|^2 t/2}}{t^{\gamma_k+N/2}} \int_A e^{-(\|x\|^2+\|y\|^2)/(2t)}
 \frac{ J_k(\frac{x}{\sqrt{t}}, \frac{y}{\sqrt{t}}) \cdot J_k(y,w)}{J_k(x,w)}
\cdot w_k(y)\> dy
\end{equation}
   for $t>0$, $x\in C_N$ and  Borel sets $A\subset  C_N$ with the weight function
   $w_k(y) := \prod_{\alpha\in R_+} |\langle\alpha,y\rangle|^{2k(\alpha)}$,
    the Bessel function $J_k$ associated with the root system $R$ and multiplicity $k$, and with suitable constants $c_k,\gamma_k>0$.
These diffusions  have modifications with continuous paths. They
will be called Bessel processes of type $R$ with multiplicity $k$ and drift $w$.   
\end{definition}

Eqs.~\eqref{Bessel-laplace-drift-gen} and \eqref{density-transition-bessel-drift-gen}
indicate that freezing limits for
these Bessel processes with drift might be derived from
freezing limits of the form
\begin{equation}\label{general-limit-functions}
  \lim_{\kappa\to\infty}\frac{\nabla_x J_{k(\kappa)}(x,\kappa y) }{\kappa \cdot J_{k(\kappa)}(x,\kappa y)}
\quad\text{and}\quad \lim_{\kappa\to\infty} J_{k(\kappa)}(x,\kappa y)^{1/\kappa}\end{equation}
for the Bessel functions  for  $x,y\in C_N$ and  multiplicities $k(\kappa)$  where all or some components of $k$ depend
linearly on $\kappa$.
These limits \eqref{general-limit-functions} are particularly interesting for the root systems $A_{N-1}$ with $k=\kappa$ as well as for
the root systems
$$B_N=\{\pm e_i:\> i=1,\dots,N\}\cup\{\pm e_i\pm e_j:\> i,j=1,\dots,N; \> i<j\}$$
 with $k=(k_1,k_2)=(\kappa,\kappa\nu)$  or  $k=(k_1,k_2)=(\kappa,k_2)$
 with constant $\nu,k_2\ge0$ where the multiplicities $k_1,k_2$ are associated with the sets 
$\{\pm e_i:\> i=1,\dots,N\}$ and $\{\pm e_i\pm e_j:\> i,j=1,\dots,N; \> i<j\}$.

Unfortunately, we are unable to compute these limits for $A_{N}$ and $B_N$ 
 for $N\ge2$ in general. For this reason, we here restrict our attention  to the root system $B_N$ with $k=(k_1,k_2)$,
 $k_2\ge0$ fixed, $k_1\to\infty$. The main result of this paper  is as follows:

\begin{theorem}\label{limit-BN-functions-intro}
Let  $N \in \N$ and $k_2\ge 0$. Then, locally uniformly for $x,y \in C_N^B$, 
\begin{align}\label{limit-BN-functions-intro1}
\lim_{k_1 \to \infty} \Bigg( J_{(k_1,k_2)}^{B}(x,k_1 y) \Bigg)^{1/k_1}
&= 2^N\prod\limits_{i=1}^N \Bigg( e^{\sqrt{1+x_i^2 y_i^2}-1}\cdot \frac{\sqrt{1+x_i^2 y_i^2}-1}{x_i^2 y_i^2} \Bigg)\notag\\
&= 2^N\prod\limits_{i=1}^N \Bigg( e^{\sqrt{1+x_i^2 y_i^2}-1}\cdot \frac{1}{\sqrt{1+x_i^2 y_i^2}+1} \Bigg).
\end{align}
Moreover, if  $N=1,2$ or if $N\ge3$ with $k_2=0,1/2,1,2$, then for all  $j=1,\ldots,N$, and locally uniformly for $x,y$ in the interior of $C_N^B$,
\begin{equation}\label{limit-BN-functions-intro2}
\lim_{k_1 \to \infty} \frac{\partial_{x_j} J_{(k_1,k_2)}^B(x,k_1 y) }{k_1 \cdot J_{(k_1,k_2)}^B( x,k_1 y)}
= \frac{\sqrt{x_j^2y_j^2+1}-1}{x_j}.
\end{equation}
\end{theorem}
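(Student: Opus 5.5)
The plan is to use the series expansion of $J^B_{(k_1,k_2)}$ in Jack polynomials (cf. the abstract):
\[
J^B_{(k_1,k_2)}(x,y)=\sum_{\lambda}\frac{C_\lambda^{(1/k_2)}(x^2)\,C_\lambda^{(1/k_2)}(y^2)}{4^{|\lambda|}\,[\mu]_\lambda^{(1/k_2)}\,|\lambda|!\,C_\lambda^{(1/k_2)}(\mathbf 1)},\qquad \mu=k_1+(N-1)k_2+\tfrac12,
\]
where $x^2=(x_1^2,\dots,x_N^2)$ and $[\mu]_\lambda$ is the generalized Pochhammer symbol. I will replace $y$ by $k_1y$ and compare with the decoupled case. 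The first step is to record the one-dimensional model. For $N=1$, $J_{k_1}(x,k_1y)=j_{k_1-1/2}(ik_1xy)$ is a modified Bessel function of large order with proportional argument. Debye's uniform asymptotics for $I_\nu(\nu z)$ give
\[
\lim_{k_1\to\infty} j_{k_1-1/2}(ik_1 t)^{1/k_1}=2\,e^{\sqrt{1+t^2}-1}\big/\big(\sqrt{1+t^2}+1\big).
\]
Concretely, $I_\nu(\nu z)\sim e^{\nu\eta}/\sqrt{2\pi\nu}(1+z^2)^{1/4}$ with $\eta=\sqrt{1+z^2}+\log\frac{z}{1+\sqrt{1+z^2}}$. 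Together with the normalization $\Gamma(\nu+1)(2/(\nu z))^\nu$ and Stirling's formula, this produces exactly that expression. The same limit can also be read off from the power series by locating the maximal term via the saddle point of $\sum_n (k_1^2t^2/4)^n/(n!(k_1)_n)$. In that computation $n\approx k_1 s$ with $s^2+s=t^2/4$, which gives $s=(\sqrt{1+t^2}-1)/2$.

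For general $N$, the second step is to show that the Jack series localizes at partitions $\lambda$ with $|\lambda|$ of order $k_1$. Since $[\mu]_\lambda=\prod_i(\mu-k_2(i-1))_{\lambda_i}$ and $\mu=k_1+O(1)$, each factor is $(k_1+O(1))_{\lambda_i}$. The terms of the series then have exponential size in $k_1$ governed by $\lambda/k_1$. I will bound $C_\lambda(x^2)C_\lambda(k_1^2y^2)/C_\lambda(\mathbf 1)$ above and below. The upper bound uses $C_\lambda(x^2)\le C_\lambda(\mathbf 1)\max$-type monomial estimates. The lower bound keeps only the leading monomial $x^{2\lambda}$ up to factors that are subexponential in $|\lambda|\sim k_1$, because the number of monomials and the Jack coefficients grow only polynomially in $|\lambda|$ for fixed $N$. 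The $k_1$-th root of the sum then equals the supremum over $s\in\R_+^N$ of
\[
\prod_i \frac{(x_i^2y_i^2/4)^{s_i}}{s_i^{s_i}(1+s_i)^{1+s_i}}\cdot(\text{const}),
\]
and this is the Laplace principle. The supremum factorizes across coordinates and coincides with the product of the $N=1$ limits, which gives \eqref{limit-BN-functions-intro1}. Local uniformity comes from monotonicity in $x_iy_i$ and continuity of the limit. The main obstacle is this second step: the polynomial (in $|\lambda|$) control of $C_\lambda^{(1/k_2)}(x^2)C_\lambda^{(1/k_2)}(y^2)/C_\lambda(\mathbf 1)$ against $\prod_i (x_iy_i)^{2\lambda_i}$, uniformly in $\lambda$. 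I would try to get it from the positivity of the monomial expansion of Jack polynomials and the dominance ordering, so that the leading term appears in the lower bound and a sum of $\le (|\lambda|+1)^N$ terms appears in the upper bound.

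For \eqref{limit-BN-functions-intro2}, $\log J$ is convex in suitable variables. One could then pass from convergence of $\frac1{k_1}\log J$ to convergence of its derivatives, but convexity in $x_j$ is not evident. Instead, I will use the Laplace-type integral representations: R\"osler's for $k_2=0,1/2,1,2$ and Demni's for $N=2$. These write $J(x,k_1y)=\int e^{k_1\langle \phi(x,\cdot),y\rangle}\,d\mu_{k_1}$, with the $k_1$-dependence entering via densities like $(1-|\cdot|^2)^{k_1-c}$. Differentiating under the integral gives $\partial_{x_j}J/(k_1J)$ as an expectation of $\partial_{x_j}\phi\cdot y$ under a tilted measure. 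By Laplace's method this tilted measure concentrates at the maximizer of the exponent, which is the same variational problem as above, and the ratio converges to $\partial_{x_j}$ of the rate function. That derivative is $\frac{d}{dx_j}\big(\sqrt{1+x_j^2y_j^2}-\log(\sqrt{1+x_j^2y_j^2}+1)\big)=(\sqrt{1+x_j^2y_j^2}-1)/x_j$. For $N=1$ I will use the recurrence $j_\alpha'$ with Bessel ratios and the Debye asymptotics directly.
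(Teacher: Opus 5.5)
Your architecture matches the paper's. For \eqref{limit-BN-functions-intro1} it is a Laplace principle over partitions in the Jack series (Sections 3--4). For \eqref{limit-BN-functions-intro2} it is Laplace integral representations with a ratio form of Laplace's method (Section 2). However, the estimate you single out as the main obstacle is false as stated. Consequently, the factorized variational problem you write down does not follow from your argument.

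With $\alpha=1/k_2$ one has $C_\lambda(x^2)C_\lambda(y^2)/C_\lambda(\mathbf 1)=\frac{|\lambda|!\,\alpha^{|\lambda|}}{c'_\lambda(\alpha)}\cdot\frac{P_\lambda(x^2)P_\lambda(y^2)}{P_\lambda(\mathbf 1)}$, and the scalar prefactor is exponentially large. For example, take $k_2=1$, $N=2$, $\lambda=(n,n)$. Then $P_\lambda(a,b)=(ab)^n$ and $C_\lambda=\mathrm{Cat}_n P_\lambda$ with the Catalan number $\mathrm{Cat}_n$, so the ratio equals $\mathrm{Cat}_n(x_1x_2y_1y_2)^{2n}\approx 4^n\prod_i(x_iy_i)^{2\lambda_i}$.

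So the coefficients of $C_\lambda$ do not grow polynomially. Only those of the monic $P_\lambda$ do, and even that does not follow from positivity and dominance order, which say nothing about the size of the $c_{\lambda,\mu}$. Indeed, if your estimate held, the rate would be $e^{2|s|}\prod_i z_i^{s_i}/\bigl(|s|^{|s|}\prod_i(1+s_i)^{1+s_i}\bigr)$ with $z_i=x_i^2y_i^2/4$. This does not factorize, and its supremum is strictly smaller as soon as two $z_i>0$. (Your displayed rate also lacks the factor $e^{2s_i}$; without it the maximizer is not at $s(1+s)=z$.)

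Two ingredients are missing:
\begin{itemize}
\item[(i)] You need $P_\lambda(\mathbf 1)^{1/k}\to 1$ for $\lambda/k$ bounded. This comes from the closed formula \eqref{P(1)} and the sum-versus-integral comparison of Lemma~\ref{Vorbereitung_Approximation}. Together with positivity, Muirhead and $\sum_\mu c_{\lambda,\mu}\le P_\lambda(\mathbf 1)$, this gives Theorem~\ref{Limes_1_Jackpolynom_Lemma}.
\item[(ii)] You need the exponential rate of the remaining scalar $k_1^{2|\lambda|}c_\lambda(\alpha)/\bigl((k_2N)^{(\alpha)}_\lambda\,c'_\lambda(\alpha)\,(k_1+(N-1)k_2+\tfrac12)^{(\alpha)}_\lambda\bigr)$, computed in Lemma~\ref{Limes_1_ohne_Jackpolynomanteil}. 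This is exactly where $1/|\lambda|!$ turns into $\prod_i 1/\lambda_i!$ at exponential scale.
\end{itemize}
Also, the series needs $k_2>0$, so the case $k_2=0$ of \eqref{limit-BN-functions-intro1} requires the product formula \eqref{Bessel-fct-k20}.

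For \eqref{limit-BN-functions-intro2}, R\"osler's representation covers only $k_2=1/2,1,2$. For $k_2=0$ you need the $S_N$-sum \eqref{Bessel-fct-k20}, plus a proof that for $x,y$ in the interior of $C_N^B$ the identity permutation is the unique dominant summand. The paper obtains this from $\partial_x\partial_y\ln J^{B_1}_{k_1}>0$ and a Monge-matrix argument.

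Moreover, for $k_2=d/2$ the maximizing set of the phase on $U_N^3\times[0,1]^N$ is not a single point. It is, e.g., invariant under $(u_1,u_3)\mapsto(Du_1,u_3D^{-1})$ for diagonal unitary $D$. So ``the tilted measure concentrates at the maximizer'' is not enough. Lemma~\ref{Laplace-method-fraction} needs $\mathrm{Re}\,(u_3\,\mathrm{diag}(y)\,u_1\,\mathrm{diag}(p)\,u_2)_{jj}$ to be constant on the whole maximizing set. The paper derives this from the equality case of the extended von Neumann inequality (Lemma~\ref{maxima-cone1}). Alternatively, Danskin's theorem together with differentiability of the explicit maximal value gives this constancy, and would make your envelope-type identification of the limit rigorous.

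Your coordinatewise-monotonicity argument for local uniformity and the Debye route for $N=1$ are fine.
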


Notice that \eqref{limit-BN-functions-intro2} formally is the logarithmic derivative of \eqref{limit-BN-functions-intro1}.
We expect that
\eqref{limit-BN-functions-intro2}  holds for all $N$ and $k_2\ge0$. We shall prove 
\eqref{limit-BN-functions-intro1} and
\eqref{limit-BN-functions-intro2}  by different methods. 
For  $k_2=1/2,1,2$ and general $N$ we use
the explicit Laplace integral representation  for $J_{(k_1,k_2)}^{B}$ in \cite{R4} which is based on the fact that here for certain discrete
$k_1$, the Bessel functions 
$J_{(k_1,k_2)}^B$ are spherical functions; see \cite{FK}. Moreover,  the case $J_{(k_1,0)}^{B_N}$ can be reduced to the case $N=1$.
Furthermore, for $N=2$, the Laplace integral representation for $J_{(k_1,k_2)}^{B_2}$ of Demni (see \cite{De, AD})
together with 
 \eqref{limit-BN-functions-intro1} and
\eqref{limit-BN-functions-intro2} for $N=1$ imply that for $\nu_1,\nu_2\ge0$ and $x,y\in C_2^B$ the limits
\begin{equation}\label{limit-BN-functions-intro3}
\lim_{\kappa \to \infty} \Bigg( J_{(\kappa \nu_1,\kappa \nu_2)}^{B_2}(x,\kappa y) \Bigg)^{1/\kappa} \quad\quad\text{and}\quad\quad
\lim_{\kappa \to \infty} \frac{\partial_{x_j} J_{(\kappa \nu_1,\kappa \nu_2)}^{B_2}(x,\kappa y) }{\kappa \cdot J_{(\kappa \nu_1,\kappa \nu_2)}^{B_2}(x,\kappa y)}  \quad(j=1,2)
\end{equation}
exist where these limits cannot be computed usually, as one would have to solve some complicated algebraic equations.
Moreover, the limits in \eqref{limit-BN-functions-intro3} hold locally uniformly in $x$ and $y$.
However, this approach and \eqref{limit-BN-functions-intro1} lead to \eqref{limit-BN-functions-intro2}  for $N=2$
and general $k_2>0$. For the details  see Section 2.

On the other hand, for arbitrary  $k_2>0$ and $N\ge3$
we  employ the series expansion of $J_{(k_1,k_2)}^B$ in terms of Jack polynomials (see \cite{Kan, FK, R4})
in order to derive \eqref{limit-BN-functions-intro1}.
This  proof of \eqref{limit-BN-functions-intro1} via Jack polynomials is more involved and is based on some limit result for Jack polynomials
which we  discuss in Section 3. Section 4 is then devoted to the proof of \eqref{limit-BN-functions-intro1} for general $k_2>0$.
Unfortunately, we are not able to extend the series approach also to the limit \eqref{limit-BN-functions-intro2}.

The limit \eqref{limit-BN-functions-intro1} should be compared with recent
uniform bounds for Bessel functions and Dunkl kernels for general root systems in \cite{L} and \cite{CLWW}.

The limit result 
\eqref{limit-BN-functions-intro1} will be used  in order to derive a weak limit theorem for the Bessel processes
$(X^{B,w}_{t,(k_1,k_2),x})_{t \geq 0}$ of type $B_N$ with multiplicity $(k_1,k_2)$,  drift $w \in C^N_B$, and starting point $x\in C^N_B$
for $k_1\to\infty$ and $k_2\ge0$ fixed. In order to motivate the limit, let us assume for a moment that
\eqref{limit-BN-functions-intro2} is proved for arbitrary $k_2\ge0$. Consider the renormalized
generators $\widetilde L_{(k_1,k_2)}^{B,w}:= \frac{1}{k_1} L_{(k_1,k_2)}^{B,w}$  with  $L_{(k_1,k_2)}^{B,w}$ the generator of a Bessel process of type $B_N$
with drift $w$ as  in Definition \ref{Bessel-laplace-drift-gen}. Then,
\begin{align}
\widetilde L_{(k_1,k_2)}^{B,w}f(x)&=\frac{1}{2k_1} \Delta f(x) + \frac{k_2}{k_1} \cdot \sum\limits_{i,j=1,\ldots,N, i \not=j} \bigg( \frac{1}{x_i-x_j}+\frac{1}{x_i+x_j} \bigg) \frac{\partial}{\partial x_i} f(x) \notag\\ 
&+  \sum\limits_{i=1}^N \frac{1}{x_i} \frac{\partial}{\partial x_i} f(x) + \frac{\langle \nabla_x J^B_k(x,w),\nabla f(x) \rangle}{k_1\cdot J^B_k(x,w)}
\end{align}
for all invariant $f\in C^{(2)}(\mathbb R^N)$. It now follows from
\eqref{limit-BN-functions-intro2} that for $k_1 \to \infty$,
\begin{equation}
  \widetilde L_{(k_1,k_2)}^{B,k_1 w}f(x) \to \widetilde{L}_{\infty}^{B,w}f(x) \coloneqq \sum\limits_{i=1}^N \frac{\sqrt{x_i^2w_i^2+1}}{x_i} \frac{\partial}{\partial x_i} f(x).
\end{equation}
As the $\widetilde  L_{(k_1,k_2)}^{B,k_1 w}$ are the generators of the renormalized Bessel processes
$$(\widetilde{X}^{B,k_1 w}_{t,(k_1,k_2),x} :=  X^{B,k_1 w}_{t/k_1,(k_1,k_2),x})_{t \geq 0},$$
and as these processes satisfy
stochastic differential equations associated with the  $\widetilde  L_{(k_1,k_2)}^{B,k_1 w}$, we conclude on an informal level that for $k_1 \to \infty$, the random variables 
$X^{B,k_1w}_{t/k_1,(k_1,k_2),x}$ converge in probability to the solution of the initial value problem
\begin{equation} \label{AWP y-intro}
y_i'(t)=\frac{\sqrt{y_i(t)^2w_i^2+1}}{y_i(t)}, \; y_i(0)=x_i \quad\quad(i=1,\ldots,N),\end{equation}
which can be solved easily; see Lemma  \ref{DGL y}. Hence, the following weak law is not surprising:

\begin{theorem}\label{limit-processes-intro}
Let $N \in \N$, $k_2 \geq 0$, $t \geq 0$, and $x,w \in C_N^B$. 
Then for $k_1 \to \infty$,  the random variables $X^{B,k_1 w}_{t/k_1,(k_1,k_2),x}$ tend in probability to
\begin{align*}
\left( \sqrt{x_1^2 + w_1^2 t^2 + 2t \sqrt{x_1^2 w_1^2 + 1}}, \ldots, \sqrt{x_N^2 + w_N^2 t^2 + 2t \sqrt{x_N^2 w_N^2 + 1}} \right) .
\end{align*}
\end{theorem}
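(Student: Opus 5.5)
Throughout, write $k=(k_1,k_2)$, $J=J^B_{(k_1,k_2)}$, and $X:=X^{B,k_1w}_{t/k_1,(k_1,k_2),x}$. The plan is to avoid \eqref{limit-BN-functions-intro2}, which is not available for all $k_2$. Instead I combine an exact exponential-moment identity for $X$ with \eqref{limit-BN-functions-intro1}, which holds for every $k_2\ge0$. This gives a large-deviation-type upper bound, and that bound forces concentration at the claimed point $z^*$.

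\textbf{Step 1: exact moment identity.} The Bessel functions satisfy the heat-semigroup identity
$$\int_{C_N^B}\frac{|W|c_k}{s^{\gamma_k+N/2}}e^{-(\|x\|^2+\|y\|^2)/(2s)}J\Big(\frac{x}{\sqrt s},\frac{y}{\sqrt s}\Big)J(y,z)w_k(y)\,dy=e^{s\|z\|^2/2}J(x,z),$$
obtained from the product formula / Gaussian integral for $J$. Inserting $g(y)=J(y,z)/J(y,w)$ into \eqref{density-transition-bessel-drift-gen} then gives
$$E\big[J(X_s,z)/J(X_s,w)\big]=e^{s(\|z\|^2-\|w\|^2)/2}\,J(x,z)/J(x,w).$$
I apply this with drift $k_1w$, time $s=t/k_1$ and $z=k_1u$ for $u\in C_N^B$:
$$E\Big[\frac{J(X,k_1u)}{J(X,k_1w)}\Big]=\exp\Big(k_1\tfrac t2(\|u\|^2-\|w\|^2)\Big)\frac{J(x,k_1u)}{J(x,k_1w)}.$$

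\textbf{Step 2: passage to the exponential scale.} Put
$$f(s)=\sqrt{1+s^2}-1-\log\big(\sqrt{1+s^2}+1\big)+\log 2,\qquad F(y,u)=\sum_i f(y_iu_i).$$
By \eqref{limit-BN-functions-intro1}, locally uniformly in $y,u$ we have $J(y,k_1u)=\exp\big(k_1(F(y,u)+o(1))\big)$.

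For a compact set $K\subset C_N^B$ and $z\in K$, restrict the expectation in Step 1 to a small ball $B_\delta(z)$. This gives
$$P\big(X\in B_\delta(z)\big)\le \exp\Big(-k_1\big[F(z,u)-F(z,w)-\Lambda(u)-\varepsilon(\delta)\big]+o(k_1)\Big),$$
where
$$\Lambda(u)=\tfrac t2(\|u\|^2-\|w\|^2)+F(x,u)-F(x,w).$$
Everything separates in the coordinates. So it suffices to study, for each $i$, the quantity
$$\sup_{u_i\ge0}\Big[f(z_iu_i)-f(x_iu_i)-\tfrac t2u_i^2\Big]-\Big[f(z_iw_i)-f(x_iw_i)-\tfrac t2w_i^2\Big]\ \ge0.$$
If this is strictly positive for every $z$ with $z_i\neq z_i^*$, then covering $K\setminus B_\eta(z^*)$ by finitely many balls gives exponentially small probability there.

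\textbf{Step 3: the one-dimensional computation.} One computes $f'(s)=(\sqrt{1+s^2}-1)/s$. Hence the $u_i$-derivative of the bracket vanishes at $u_i=w_i$ exactly when
$$\sqrt{1+z_i^2w_i^2}-\sqrt{1+x_i^2w_i^2}=tw_i^2,$$
that is, when $z_i^2=x_i^2+w_i^2t^2+2t\sqrt{1+x_i^2w_i^2}$, which is the claimed limit $z_i^*$. For $z_i\ne z_i^*$ the derivative at $w_i$ is nonzero, so the supremum strictly exceeds the value at $w_i$.

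The degenerate case $w_i=0$ is handled by expanding near $u_i=0$. There the second-order term $\tfrac{u_i^2}{4}(z_i^2-x_i^2-2t)$ gives the same conclusion, since $z_i^{*2}=x_i^2+2t$. Continuity in $z$ makes the positivity uniform on compacts away from $z^*$.

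\textbf{Step 4: tightness, the main obstacle.} One must still exclude escape of mass to infinity, where \eqref{limit-BN-functions-intro1} is only locally uniform. I would first try choosing $u$ with large components in Step 1, combined with a crude lower bound $J(y,k_1u)\ge$ const (from $J>0$ and monotonicity of $J(y,\cdot)$ along the chamber). Together with an upper bound $J(y,k_1w)\le e^{k_1\langle y,w\rangle}$ (as in \cite{L, CLWW}), this should yield $P(\|X\|>R)\to0$ for large $R$.

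If that is too weak, an alternative is the explicit second moment of $\|X\|^2$. Applying the generator to $\|y\|^2$ gives $E\|X\|^2=\|x\|^2+O(t)$ plus a drift term controlled by $\langle\nabla J,\cdot\rangle/J\le \|k_1w\|$.

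Controlling the boundary $\partial C_N^B$ and the uniformity of the $o(1)$ terms near points with $y_iu_i=0$ is the delicate part. The rest follows routinely from Steps 1–3.
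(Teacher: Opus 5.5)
\textbf{There is a genuine gap in Steps 2--3.} Your Step 1 identity is correct: it is the same Gaussian identity that makes the drift kernel a probability kernel. The Chernoff-type bound in Step 2 follows from it. For $w$ in the open chamber ($w_1>\dots>w_N>0$), Step 3 does give a positive rate at every $z\neq z^*$, because $u_i$ can be perturbed in either direction.

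The argument breaks down when $w$ lies on the boundary of $C_N^B$. The theorem includes this case, in particular zero drift $w=0$.

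\emph{Zero coordinates of $w$.} Put $\phi_i(s):=f(z_is)-f(x_is)-\tfrac t2 s^2$. Since $f$ is even, so is $\phi_i$. At $w_i=0$ there is therefore no linear term, and the quadratic coefficient $\tfrac14(z_i^2-z_i^{*2})$ is \emph{negative} for $z_i<z_i^*$. So your claim that the expansion ``gives the same conclusion'' holds only for $z_i>z_i^*$. Concretely, take $N=1$, $w=0$ and any $z<z^*=\sqrt{x^2+2t}$. Then $\phi'(s)<0$ for all $s>0$: trivially if $z\le x$, and if $x<z$ because $\sqrt{1+z^2s^2}-\sqrt{1+x^2s^2}<\tfrac12(z^2-x^2)s^2<ts^2$. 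Hence $\sup_{u\ge0}\phi(u)=0$, and your method gives no control at all of $P(X\le z^*-\varepsilon)$.

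\emph{Ties in $w$.} The claim that ``everything separates in the coordinates'' overlooks that $u$ must stay in $C_N^B$. The limit \eqref{limit-BN-functions-intro1} is only available there; off the chamber, $W$-invariance replaces $u$ by the decreasingly ordered vector of the $|u_i|$. If $w$ has ties, admissible perturbations $d$ of $u=w$ must satisfy $d_i\ge d_{i+1}$ inside each tied block. Example: $N=2$, $t=1$, $x=(1,0)$, $w=(1,1)$, and $z=(\zeta,\zeta)$ with $2\sqrt{1+\zeta^2}=3+\sqrt2$, so $\zeta\approx1.968$.
\begin{itemize}
\item Then $z_1<z_1^*=\sqrt{2+2\sqrt2}$ and $z_2>z_2^*=\sqrt3$, and $\phi_1'(1)=-\phi_2'(1)\neq0$.
\item Both $\phi_1$ and $\phi_2$ are unimodal, with maximizers on opposite sides of $1$.
\item $s\mapsto\phi_1(s)+\phi_2(s)$ is maximal at $s=1$.
\end{itemize}
Hence the supremum of $\phi_1(u_1)+\phi_2(u_2)$ over $u_1\ge u_2\ge0$ is attained at $u=w$, and your rate vanishes at a point $z\neq z^*$.

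The structural reason is that you only obtain a dual lower bound for the true rate, computed over the cone of admissible tilts. The test functions $y\mapsto J(y,k_1u)$ are coordinatewise increasing, so downward deviations in degenerate directions of $w$ are invisible to them.

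\textbf{How the paper avoids this.} It applies \eqref{limit-BN-functions-intro1} directly inside the explicit density \eqref{density-transition-bessel-drift-gen}, rather than inside exponential moments. With Stirling for the constants, the density of $X$ becomes $\exp(-k_1(H_{x,w,t}(y)+o(1)))V_{k_2}(y)$. Here $H_{x,w,t}$ is separable in the integration variable $y$ itself, and strictly convex in each $y_i$ with unique zero at $y_{x_i,w_i,t}$ (Lemma \ref{Minimum H}). The coordinatewise minimizer lies in $C_N^B$ automatically, so no constraint issue arises. Deviations in every direction and for every $w\in C_N^B$ are therefore controlled. Tightness comes from $1\le J\le e^{\|x\|\|y\|}$ (Lemma \ref{Abschaetzung 2 Besselfunktion B} and Corollary \ref{H-Schlange y gross}).

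\textbf{On your Step 4.} The first suggestion does not work. The bounds $J(y,k_1u)\ge c$ and $J(y,k_1w)\le e^{k_1\|y\|\|w\|}$ bound the ratio from below by something exponentially \emph{small} on $\{\|X\|>R\}$, which gives no tail estimate. The second-moment route, using $\|\nabla_y\ln J(y,k_1w)\|\le k_1\|w\|$, can be made to work. However, it does not close the lower-deviation gap above, which needs a genuinely different ingredient, most naturally the density itself.
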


Please notice that the argument above is informal only, as \eqref{limit-BN-functions-intro2}
is proved for $N=1,2$ or $k_2=0,1/2,1,2$ only, and as the arguments above are  informal. In fact, we shall prove
Theorem \ref{limit-processes-intro} in Section 5 by inserting  \eqref{limit-BN-functions-intro1} (which holds for all $k_2\ge0$)
into the densities  \eqref{density-transition-bessel-drift-gen} of the random variables $X^{B,k_1 w}_{t/k_1,(k_1,k_2),x}$
where we then use the Laplace method similar to the proof of   \eqref{limit-BN-functions-intro1} in Section 2.
When doing so, we must find the unique maximum of some concrete function, which seems to be a serious computational problem. However, having 
an idea of the limit in Theorem \ref{limit-processes-intro} by the informal argument above, we obtain a candidate for our maximation problem such that
this problem can be solved in Lemma 5.2 below.  This then will complete the proof of Theorem \ref{limit-processes-intro}.

We point out two further facts about  Theorem \ref{limit-processes-intro}.
First of all, as $J_k(cx,y)=J_k(x,cy)$ for all $c\in\mathbb C$, $x,y\in\mathbb C^N$ and all Bessel functions,
\eqref{density-transition-bessel-drift-gen} yields that Bessel processes with drift have the same 
homogeneity properties as classical Brownian motions with constant drift, i.e., the random variables $X^{B,k_1 w}_{t/k_1,(k_1,k_2),x}$ and
$X^{B,\sqrt{k_1}\cdot w}_{t,(k_1,k_2),\sqrt{k_1}x}/\sqrt{k_1}$ have the same distributions. Therefore, by Theorem \ref{limit-processes-intro}:

\begin{corollary} \label{limit-processes-intro-cor} 
Let  $N \in \N$, $k_2\ge0$, $t \geq 0$, and $x,w \in C_N^B$. 
Then for $k_1 \to \infty$,  the random variables $X^{B,\sqrt{k_1}\cdot w}_{t,(k_1,k_2),\sqrt{k_1}x}/\sqrt{k_1}$ tend in probability to
\begin{align*}
\left( \sqrt{x_1^2 + w_1^2 t^2 + 2t \sqrt{x_1^2 w_1^2 + 1}}, \ldots, \sqrt{x_N^2 + w_N^2 t^2 + 2t \sqrt{x_N^2 w_N^2 + 1}} \right) .
\end{align*}
\end{corollary}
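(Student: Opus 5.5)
The corollary is a direct consequence of Theorem \ref{limit-processes-intro} together with the scaling (homogeneity) property of Bessel processes with drift. Convergence in probability to a constant depends only on the distributions. So the plan is to show that $X^{B,\sqrt{k_1} w}_{t,(k_1,k_2),\sqrt{k_1}x}/\sqrt{k_1}$ has the same law as $X^{B,k_1 w}_{t/k_1,(k_1,k_2),x}$, and then invoke Theorem \ref{limit-processes-intro}.

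To verify the equality in law, we use the transition densities \eqref{density-transition-bessel-drift-gen} for type $B_N$, where $\gamma_k=k_1N+k_2N(N-1)$ is the degree of homogeneity of $w_k$. For a Borel set $A\subset C_N^B$ we compute $K^{\sqrt{k_1}w,k}_{t}(\sqrt{k_1}x,\sqrt{k_1}A)$ and substitute $y=\sqrt{k_1}z$.

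\emph{Jacobian and weight.} The Jacobian is $k_1^{N/2}$, and $w_k(\sqrt{k_1}z)=k_1^{\gamma_k}w_k(z)$. Together these cancel against $t^{\gamma_k+N/2}$, which turns into $(t/k_1)^{\gamma_k+N/2}$.

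\emph{Exponentials.} The factor $e^{-(k_1\|x\|^2+k_1\|z\|^2)/(2t)}$ equals $e^{-(\|x\|^2+\|z\|^2)/(2(t/k_1))}$. Likewise $e^{-k_1\|w\|^2t/2}=e^{-\|k_1w\|^2(t/k_1)/2}$.

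\emph{Bessel functions.} By $J_k(cx,y)=J_k(x,cy)$:
\[
J_k\Big(\tfrac{\sqrt{k_1}x}{\sqrt t},\tfrac{\sqrt{k_1}z}{\sqrt t}\Big)=J_k\Big(\tfrac{x}{\sqrt{t/k_1}},\tfrac{z}{\sqrt{t/k_1}}\Big).
\]
Similarly $J_k(\sqrt{k_1}z,\sqrt{k_1}w)=J_k(z,k_1w)$, and in the same way $J_k(\sqrt{k_1}x,\sqrt{k_1}w)=J_k(x,k_1w)$.

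Hence $K^{\sqrt{k_1}w,k}_{t}(\sqrt{k_1}x,\sqrt{k_1}A)=K^{k_1w,k}_{t/k_1}(x,A)$, which is exactly the claimed identity of distributions. Theorem \ref{limit-processes-intro} then gives convergence in distribution to the stated deterministic vector, and convergence in distribution to a constant is equivalent to convergence in probability.

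The only step needing care is this bookkeeping of the constants $c_k,\gamma_k$, and none of it is a real obstacle. For $t=0$ the statement is trivial, since the starting point $\sqrt{k_1}x/\sqrt{k_1}=x$ matches the limit at $t=0$.
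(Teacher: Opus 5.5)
Your proof is correct and follows the paper's own route. The paper also obtains the equality in law of $X^{B,\sqrt{k_1} w}_{t,(k_1,k_2),\sqrt{k_1}x}/\sqrt{k_1}$ and $X^{B,k_1 w}_{t/k_1,(k_1,k_2),x}$ from the transition densities and the identity $J_k(cx,y)=J_k(x,cy)$, and then applies Theorem \ref{limit-processes-intro}; your explicit bookkeeping of the Jacobian, of $w_k(\sqrt{k_1}z)=k_1^{\gamma_k}w_k(z)$, and of the exponential factors just spells out what the paper states in one line.
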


For $x=w=0$, this corollary fits to Eq. (20) in Theorem 2 of \cite{AKM2}.

 Furthermore, we
 show in Theorem \ref{schwacher Limes K-Schlange} that the convergence in Theorem \ref{limit-processes-intro}
 and  Corollary \ref{limit-processes-intro-cor} holds locally uniformly for
$x,w\in C_N^B$ and $t>0$. We also expect that the informal approach above for $N=2$ or $k=0,1/2,1,2$
can be made precise with much better types of convergence like for corresponding results with drift $0$ in \cite{AnV1, VW}.
We also mention that Theorem \ref{limit-processes-intro} is related with freezing limits for Bessel processes without drift in
\cite{AKM1, AKM2, AM, AHV,  AnV2, GK, V1, VW}.

We now illustrate Theorem \ref{limit-processes-intro}
and  Corollary \ref{limit-processes-intro-cor} for  $N=1$ for geometric parameters where the Bessel processes with drift are the $L^2$-norms
of Brownian motions on $\mathbb R^M$ with drift:

\begin{example}\label{1-dim-example} For $N=1$, the Bessel functions of type $B$ do not depend on $k_2$, and we simply write $J_{k_1}^B$ for them.
  These functions are related to the one-dimensional normalized Bessel functions
  $j_\alpha(z)=\>_0F_1(\alpha+1; -z^2/4)$ for $\alpha\ge-1/2$, $z\in\mathbb C$ by
  $$J_{k_1}^B(z,w)=j_{k_1-1/2}(izw) \quad\quad(k_1\ge0,\> z,w\in\mathbb C);$$
  see for instance \cite{R4}. For $k_1>0$,  this Bessel function has the integral representation
\begin{align}\label{Integralformel 1D Besselfunktion}
J_{k_1}^{B_1}(x,y)=\frac{\Gamma(k_1+1/2)}{\sqrt\pi\Gamma(k_1)} \int_{-1}^{1} e^{xyt} (1-t^2)^{k_1-1}\> dt.
\end{align}
      For integers $M\in\mathbb N$ and $k_1=(M-1)/2$,
  these Bessel functions and the associated Bessel processes (without and with drift) are related with the exponential function and Brownian motions
  (without and with drift) on $\mathbb R^M$ as follows (see Subsection 3.3 of \cite{V2} for $N=1$):

  If $\sigma_M$ is the uniform distribution on the sphere $S^{M-1}\subset \mathbb R^M$, then we have
\begin{equation}\label{int-rep-N1}
  \int_{S^{M-1}} e^{\langle x,y\rangle} \> d\sigma_M(y)= J^B_{k_1}(\|x\|_2,1) \quad\quad(k_1=(M-1)/2, \> x\in  \mathbb R^M).\end{equation}
  This leads to the following connection between Brownian motions on $\mathbb R^M$ and Bessel processes with $N=1$ and with drift:
  Let $\lambda\in\mathbb R^M$ be any drift vector and $x\ge0$. Consider the uniform distribution $\sigma_{M,x}$ on the sphere $x\cdot S^{M-1}$ (for $x=0$ it
  degenerates into $\delta_0$) and modify it via $\lambda$ into
  $$d\sigma_{M,x,\lambda}(y):=\frac{1}{J^B_{k_1}(x,\|\lambda\|_2)} e^{\langle \lambda,y\rangle} \> d\sigma_{M,x}(y) \quad\quad(y\in  \mathbb R^M).$$
  Then by \eqref{int-rep-N1}, $\sigma_{M,x,\lambda}$ is a probability measure.
  If $(B^M_{t,x,\lambda})_{t\ge0}$ is a Brownian motion on  $\mathbb R^M$ with initial distribution  $\sigma_{M,x,\lambda}$ and drift $\lambda$, then by \cite{V2},
  $(\|B^M_{t,x,\lambda}\|_2)_{t\ge0}$ is a Bessel process with $k_1=(M-1)/2$ with drift $\|\lambda\|_2$ and start in $x\ge0$.
In this way,  Theorem \ref{limit-processes-intro} implies:
 \end{example}

\begin{corollary} \label{limit-processes-intro-cor-1dim} 
  Let   $t,x,w \geq 0$. For each $M\in\mathbb N$, we choose vectors $\lambda_M\in \R^M$ with $\|\lambda_M\|_2=w$ as well as 
  Brownian motions  $(B^M_{t,x,(M-1)\lambda_M/2})_{t\ge0}$ on  $\mathbb R^M$ with initial distributions $\sigma_{M,x,(M-1)\lambda_M/2}$ and drift
  vectors $(M-1)\lambda_M/2$.
Then for $M \to \infty$, the random variables $\| B^M_{2t/(M-1),x,(M-1)\lambda_M/2}  \|_2$ tend in probability to
$$ \sqrt{x^2 + w^2 t^2 + 2t \sqrt{x^2 w^2 + 1}}.$$
  \end{corollary}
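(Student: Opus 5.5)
We derive Corollary \ref{limit-processes-intro-cor-1dim} from Theorem \ref{limit-processes-intro} for $N=1$, using the identification in Example \ref{1-dim-example} and the Brownian scaling behind Corollary \ref{limit-processes-intro-cor}.

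\emph{Step 1 (identification).} Fix $M\ge2$ and put $k_1:=(M-1)/2$. By Example \ref{1-dim-example} (from \cite{V2}), the process $(\|B^M_{s,x,\lambda}\|_2)_{s\ge0}$ with $\lambda=(M-1)\lambda_M/2=k_1\lambda_M$ is a Bessel process of type $B_1$. It has multiplicity $k_1$, drift $\|\lambda\|_2=k_1 w$ and start in $x$. We use only the one-dimensional marginals, and these are determined by the transition densities \eqref{density-transition-bessel-drift-gen} together with the starting point. Hence $\|B^M_{s,x,k_1\lambda_M}\|_2$ has the same law as $X^{B,k_1w}_{s,k_1,x}$ for every $s\ge0$. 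The law therefore does not depend on the direction of $\lambda_M$, only on $\|\lambda_M\|_2=w$. This step is the only place where care is needed: we must make sure that the normalization of the drift, $\|\lambda\|_2=k_1w$, matches the drift parameter $k_1 w$ in Theorem \ref{limit-processes-intro}.

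\emph{Step 2 (time scaling).} Choose $s=2t/(M-1)=t/k_1$. Then $\|B^M_{2t/(M-1),x,(M-1)\lambda_M/2}\|_2$ has the same distribution as $X^{B,k_1w}_{t/k_1,k_1,x}$.

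\emph{Step 3 (passage to the limit).} Theorem \ref{limit-processes-intro} with $N=1$ applies; since $k_2$ plays no role for $N=1$, it does not matter which $k_2$ we take. It shows that these variables converge in probability to the constant
$$\sqrt{x^2+w^2t^2+2t\sqrt{x^2w^2+1}}$$
as $k_1=(M-1)/2\to\infty$. Convergence in probability to a constant depends only on the one-dimensional distributions. Therefore the same convergence holds for the random variables $\|B^M_{2t/(M-1),x,(M-1)\lambda_M/2}\|_2$, which may live on different probability spaces for different $M$. This yields the claim.

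Two small points remain. First, $M=1$ gives $k_1=0$ and is irrelevant for the limit. Second, the theorem is applied along the discrete sequence $k_1=(M-1)/2$, which is harmless because the limit statement holds for $k_1\to\infty$ through arbitrary real values.
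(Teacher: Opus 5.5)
Your proposal is correct and is the argument the paper has in mind; the paper gives no separate proof and only says that Theorem \ref{limit-processes-intro} implies the corollary. Example \ref{1-dim-example} identifies $\|B^M_{s,x,k_1\lambda_M}\|_2$, with $k_1=(M-1)/2$, in law with the $B_1$-Bessel process with drift $k_1w$ started in $x$; since $2t/(M-1)=t/k_1$, Theorem \ref{limit-processes-intro} with $N=1$ gives the claim, and your remarks on one-dimensional marginals and on letting $k_1\to\infty$ along $(M-1)/2$ are the right bookkeeping.
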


The preceding example can be extended to $M\times N$-matrices over the  fields $\mathbb F=\mathbb R, \mathbb C$ or
the quaternions; see the end of Section 5. We shall also discuss the preceding results for  Dunkl kernels and Dunkl processes for $N=1$ in Section 6.

\section{Proof of Theorem \ref {limit-BN-functions-intro} for  $k_2=0,1/2,1,2$ and for $N=2$ via  integral representations}

In this section we first prove  Eqs. \eqref{limit-BN-functions-intro1} and 
\eqref{limit-BN-functions-intro2}, for $k_2=1/2,1,2$ by using the Laplace integral representation of
$J_{(k_1,k_2)}^B$ for $k_1$ sufficiently large in \cite{R4}.
For this  we first recapitulate some notations.
Let $\mathbb F=\mathbb R, \mathbb C$, or the quaternions $\mathbb H$ with real dimension $d=1,2,4$ respectively. Put $k_2:=d/2=1/2,1,2$.
Let the unitary group $U_N:=U(N,\mathbb F)$ be equipped with the normalized Haar measure $du$, and the open subset
$$D_N:=\{ v\in \mathbb F^{N\times N}: \>\> I_N-v^*v \>\>\text{positive definite}\}\subset \mathbb F^{N\times N}$$
with the usual Lebesgue measure restricted to $D_N$. Moreover, let $tr\> x$ and $\Delta(x)$
be the trace and determinant of a $N\times N$-matrix as in \cite{R4}. Put
$$\rho:=d(N-1/2)+1 \quad\text{and}\quad \mu(k_1):=k_1 + \frac{d}{2}(N-1)+1/2.$$
Assume that $k_1> (dN-1)/2$, i.e., $\mu(k_1)-\rho>-1$. Then by Corollary 4.6 and Eqs.~(4.4) and (3.12) of \cite{R4}, for all
$x,y\in C_N^B$,
\begin{equation}\label{int-rep-1}
J_{(k_1,k_2)}^B(x,y)=\frac{1}{n(k_1)}\int_{U_N}\int_{D_N} e^{Re\> tr(v^*\cdot diag(y)\cdot u^*\cdot  diag(x))}
  \Delta(I_N-v^*v)^{\mu(k_1)-\rho}\> dv \> du
\end{equation}
with some normalization   $n(k_1)>0$ which satisfies
\begin{equation}\label{kappa-sim} n(k_1)\sim \Bigl(\frac{\pi}{\mu(k_1)}\Bigr)^{dN^2/2}\sim
  \Bigl(\frac{\pi}{k_1}\Bigr)^{dN^2/2} \quad\quad \text{for}\quad k_1\to\infty
\end{equation}by Eq. (3.9) in \cite{R4}.
We now use the substitution $v^*=u_1pu_2$ in \eqref{int-rep-1}   with $u_1,u_2\in U_N$
and some diagonal matrix $p=diag(p_1,\ldots, p_N)$ with $(p_1,\ldots, p_N)\in C_N^B$. Then
$$\Delta(I_N-v^*v)=\prod_{j=1}^N (1-p_j^2) \quad\quad\text{and}\quad\quad dv=c_{d,N} \cdot du_1 \> du_2 \cdot 
\prod_{j=1}^N p_j^{h(d,N)} \> dp_1 \> \cdots\> dp_N$$
with known constants $c_{d,N}, h(d,N)>0$, and $du_1, du_2$  the  normalized Haar measure of $U_N$, and
$ dp_1 \> \cdots\> dp_N$  the Lebesgue measure. We thus can write 
the Laplace integral representation \eqref{int-rep-1} as
\begin{align}\label{int-rep-2}
  J_{(k_1,k_2)}^B(x,k_1 y)=\frac{c_{d,N}}{n(k_1)} &
  \int_{U_N}\int_{U_N}\int_{U_N} \int_{C_N^B\cap [0,1]^N} H(x,y,p, u_1,u_2,u_3)^{k_1} \cdot \\
&\quad\quad \prod_{j=1}^N p_j^{h(d,N)} \cdot\prod_{j=1}^N (1-p_j^2)^{-(dN+1)/2} \> dp \> du_1 \> du_2\> du_3
\notag  \end{align}
with
\begin{equation}\label{int-rep-3}
  H(x,y,p, u_1,u_2,u_3):= e^{Re\> tr(u_1 \cdot diag(p) \cdot  u_2 \cdot  diag(x) \cdot u_3  \cdot diag(y))}\cdot\prod_{j=1}^N (1-p_j^2).
  \end{equation}
In order to compute the limits \eqref{limit-BN-functions-intro1} and \eqref{limit-BN-functions-intro2} we now study the maxima of 
$H$ w.r.t. $u_1,u_2,u_3\in U_N$ and $p\in[0,1]^N$ for given $x,y\in C_N^B$.
For this we first recapitulate the following estimates for the ordered singular values $0\le s_{j+1}(A)\le s_j(A)$ ($j=1,\ldots, N-1$)
of matrices $ A\in \mathbb F^{N\times N}$ from \cite{HJ}; see  Theorems 3.3.13(a') and 3.3.14(a) there:

\begin{lemma}\label{estimates-singular-values} Let $ A,B\in \mathbb F^{N\times N}$. Then,
  $$ Re\> tr(AB)\le \sum_{j=1}^N s_j(AB) \quad\text{and}\quad
    \sum_{j=1}^k s_j(AB) \le \sum_{j=1}^k s_j(A)s_j(B) \quad\text{for}\quad k=1,\ldots,N .$$
\end{lemma}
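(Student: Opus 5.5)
This lemma is classical (Horn--Johnson), so the plan is to prove it first for $\mathbb F=\mathbb C$, which also covers $\mathbb F=\mathbb R$, and then to transfer it to the quaternions by the standard complex embedding.

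\emph{Step 1: the trace inequality.} Put $C:=AB$ and take a singular value decomposition $C=U\Sigma V^*$ with $U,V\in U(N,\mathbb C)$ and $\Sigma=diag(s_1(C),\ldots,s_N(C))$. Then
$$Re\> tr\, C=Re\> tr(\Sigma V^*U)=\sum_{j=1}^N s_j(C)\, Re\,(V^*U)_{jj}.$$
Every entry of the unitary matrix $V^*U$ has modulus at most $1$, and each $s_j(C)\ge0$. Hence the sum is at most $\sum_j s_j(C)$.

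\emph{Step 2: Horn's product inequality.} I will first show
$$\prod_{j=1}^k s_j(AB)\le \prod_{j=1}^k s_j(A)s_j(B)\qquad(k=1,\ldots,N).$$
To do this, pass to the $k$-th exterior power $\Lambda^k$. The largest singular value of $\Lambda^k A$ equals $s_1(A)\cdots s_k(A)$. Moreover $\Lambda^k(AB)=\Lambda^k A\,\Lambda^k B$, and the operator norm is submultiplicative, i.e. $s_1(XY)\le s_1(X)s_1(Y)$. Combining these three facts gives the product inequality.

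\emph{Step 3: from products to sums.} Step 2 says that the vector $(s_j(AB))_j$ is weakly log-majorized by $(s_j(A)s_j(B))_j$, and both vectors are nonincreasing. I then apply the standard fact that weak log-majorization implies weak majorization. The proof of that fact goes through $\varphi(t)=e^t$, which is convex and increasing, together with Tomić--Weyl/Karamata; the case of zero entries is handled by a limiting argument, for instance replacing $A,B$ by perturbations with positive singular values and using continuity of singular values. This yields
$$\sum_{j=1}^k s_j(AB)\le\sum_{j=1}^k s_j(A)s_j(B).$$
I expect this step to be the main technical point, in particular the careful treatment of vanishing singular values. The alternative route via Ky Fan's variational formula $\sum_{j\le k}s_j(C)=\max |tr(X^*CY)|$, with the maximum over $N\times k$ isometries $X,Y$, does not seem to give the product-of-singular-values bound directly, so I would stay with the exterior-power argument.

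\emph{Step 4: the quaternionic case.} For $\mathbb F=\mathbb H$, use the injective $\mathbb R$-algebra homomorphism $\psi:\mathbb H^{N\times N}\to\mathbb C^{2N\times 2N}$. It satisfies $\psi(A^*)=\psi(A)^*$ and $Re\> tr\,A=\frac12 Re\> tr\,\psi(A)$. The singular values of $\psi(A)$ are those of $A$, each repeated twice. I then apply Steps 1--3 to $\psi(A)\psi(B)=\psi(AB)$ in dimension $2N$, using the index $2k$ in the majorization inequality, and divide by $2$. This gives both inequalities over $\mathbb H$.
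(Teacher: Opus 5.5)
Your proof is correct. It necessarily takes a different route from the paper, which gives no proof of this lemma and only cites Theorems 3.3.13(a') and 3.3.14(a) of Horn--Johnson, stated there for complex matrices.

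Your Step 1 (SVD together with $|(V^*U)_{jj}|\le 1$) is more direct than the textbook route via Weyl's inequality $|tr\,C|\le\sum_j|\lambda_j(C)|\le\sum_j s_j(C)$. Steps 2--3 (Horn's product inequality through $\Lambda^k$, then weak log-majorization implies weak majorization) are the standard derivation of the additive bound.

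Two minor remarks on Steps 2--3:
\begin{itemize}
\item The zero case needs no perturbation. If $x_1\ge\dots\ge x_r>0=x_{r+1}=\dots$, the product inequality at $k=r$ forces $y_1,\dots,y_r>0$. So you may take logarithms on the first $r$ indices, and for $k>r$ use $\sum_{j\le k}x_j=\sum_{j\le r}x_j\le\sum_{j\le r}y_j\le\sum_{j\le k}y_j$.
\item The Ky Fan route does work. Since $tr(X^*ABY)=tr(A\cdot BYX^*)$, von Neumann's trace inequality together with $\mathrm{rank}(BYX^*)\le k$ and $s_j(BYX^*)\le s_j(B)$ gives the additive bound with no majorization step at all.
\end{itemize}

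The genuine gain of your proposal is Step 4. The paper also uses the lemma for $\mathbb F=\mathbb H$ (the case $k_2=2$). There the Horn--Johnson citation does not apply, and the paper only remarks after Lemma 2.2 that $Re\> tr(AB)=Re\> tr(BA)$ persists. Your complex embedding $\psi$ with doubled singular values, applied with index $2k$ and divided by $2$, closes this gap cleanly. It relies on the quaternionic singular value decomposition, e.g.\ from Zhang's survey, which the paper cites.
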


We now need the following extension of von Neumann's trace inequality:

\begin{lemma}\label{maxima-cone1}
  For all $x,y,p\in C_N^B$ and $u_1,u_2,u_3\in U_N$,
  \begin{equation}\label{inequality-trace}  Re\> tr(u_1 \cdot diag(y) \cdot  u_2 \cdot  diag(p) \cdot u_3  \cdot diag(x)) \le
    tr( diag(y) \cdot  diag(p) \cdot  diag(x))=\sum_{j=1}^Np_jx_jy_j.\end{equation}
  Moreover, if $x_1>\ldots>x_N>0$ and $p_1>\ldots>p_N$, and if equality holds in \eqref{inequality-trace}, then
  \begin{equation}\label{equality-trace}
     Re\> (u_1 \cdot diag(y) \cdot  u_2 \cdot  diag(p) \cdot u_3 )_{j,j}=p_jy_j \quad\text{for}\quad j=1,\ldots,N.
    \end{equation}
\end{lemma}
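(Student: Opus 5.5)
The inequality \eqref{inequality-trace} will follow from Lemma \ref{estimates-singular-values} applied twice. Put $A:=u_1\,diag(y)\,u_2\,diag(p)\,u_3$ and $B:=diag(x)$. The first estimate there gives $Re\, tr(AB)\le\sum_j s_j(AB)$, and the second (with $k=N$) gives $\sum_j s_j(AB)\le \sum_j s_j(A)s_j(B)$. Since $x\in C_N^B$, the singular values of $B$ are $s_j(B)=x_j$ in decreasing order. For $A$, unitary invariance of singular values gives $s_j(A)=s_j(diag(y)\,u_2\,diag(p))$. Applying the second estimate again, now with all $k=1,\ldots,N$, yields the weak majorization $\sum_{j\le k}s_j(A)\le\sum_{j\le k}y_jp_j$ for every $k$. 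The sequence $x_1\ge\ldots\ge x_N\ge0$ is decreasing and nonnegative, so Abel summation gives
$$\sum_j s_j(A)x_j=\sum_{k=1}^{N}(x_k-x_{k+1})\sum_{j\le k}s_j(A)\le\sum_{k}(x_k-x_{k+1})\sum_{j\le k}p_jy_j=\sum_j p_jx_jy_j,$$
with the convention $x_{N+1}:=0$. This proves \eqref{inequality-trace}.

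For the equality statement I would avoid tracking equality through the singular value chain and argue directly on the diagonal. Note first that $Re\, tr(AB)=\sum_j x_j\,Re\, A_{jj}$. Let $a_j:=Re\, A_{jj}$. The key claim is the partial-sum bound
$$\sum_{j\le k}a_j\le\sum_{j\le k}p_jy_j\qquad(k=1,\ldots,N).$$
It follows from the case already proven: apply \eqref{inequality-trace} with $x$ replaced by $(1,\ldots,1,0,\ldots,0)\in C_N^B$ ($k$ ones). That inequality holds for arbitrary $x\in C_N^B$ by the argument above, since the Abel summation used only $x_j\ge x_{j+1}\ge0$.

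Now assume equality in \eqref{inequality-trace}, write $D_k:=\sum_{j\le k}(p_jy_j-a_j)\ge0$, and apply Abel summation again:
$$0=\sum_j x_j(p_jy_j-a_j)=\sum_{k=1}^N(x_k-x_{k+1})D_k .$$
Because $x_1>\ldots>x_N>0$, every coefficient $x_k-x_{k+1}$ is strictly positive, including $x_N-x_{N+1}=x_N>0$. Each term is a positive coefficient times a nonnegative $D_k$, and the sum is zero, so all $D_k=0$. Taking differences gives $a_j=p_jy_j$ for every $j$, which is \eqref{equality-trace}.

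The step needing the most care is the first inequality's use of $s_j(diag(y)u_2diag(p))$. There one must check that the second estimate of Lemma \ref{estimates-singular-values} gives partial sums bounded by $\sum_{j\le k}y_jp_j$ in the correct order, which holds since $y$ and $p$ are both decreasing. The Abel summation with nonnegative weights is exactly what converts weak majorization into the weighted bound.

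In this route the hypothesis $p_1>\ldots>p_N$ is not actually needed for \eqref{equality-trace}; only the strict ordering and positivity of $x$ are used. So I would verify nothing further and simply remark that the hypothesis on $p$ is presumably used later in the paper, for instance to pin down $u_1,u_2,u_3$.
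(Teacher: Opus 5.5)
Your proof is correct. The inequality part is the same as the paper's argument: the estimates of Lemma \ref{estimates-singular-values}, then unitary invariance giving $\sum_{j\le k}s_j(A)\le\sum_{j\le k}p_jy_j$, then Abel summation. Your equality argument, however, takes a genuinely shorter route.

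The paper tracks equality back through the singular-value chain. It deduces $s_k(A)=p_ky_k$ for all $k$, and uses this together with $p_1>\ldots>p_N$ to argue that $u_2$ maps each $e_k$ into the eigenspace of $diag(y)$ for $y_k$. Hence $u_2$ commutes with $diag(y)$, and the paper reduces to $u_2=I_N$. Only then does it bound the partial sums of the real diagonal entries of $A$ by $\sum_{j\le k}p_jy_j$, writing them as $Re\> tr(u_3P_ku_1\,diag(y)\,diag(p))$ and applying Lemma \ref{estimates-singular-values}.

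You observe that this partial-sum bound is simply \eqref{inequality-trace} evaluated at the admissible vector $(1,\ldots,1,0,\ldots,0)\in C_N^B$. That bound holds for arbitrary $u_2$, so the reduction to $u_2=I_N$ is unnecessary. The closing Abel-summation step with strictly positive weights $x_k-x_{k+1}$ is then the same as the paper's.

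Your version is more robust. It uses neither the strict ordering of $p$ nor distinct entries of $y$. The paper's step ``all eigenspaces are one-dimensional'' tacitly assumes $y_1>\ldots>y_N$, which is not among the lemma's hypotheses. The paper's route does give extra structural information about the equality case, namely $s_k(A)=p_ky_k$ and $u_2$ commuting with $diag(y)$. None of that is needed for \eqref{equality-trace}.
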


\begin{proof} Let $A:=u_1 \cdot diag(y)  \cdot u_2 \cdot  diag(p) \cdot u_3$. 
  Then by Lemma \ref{estimates-singular-values},
  \begin{equation}\label{inequality-trace1}  Re\> tr(A\cdot diag(x))\le \sum_{j=1}^N s_j(A\cdot diag(x))\le \sum_{j=1}^N s_j(A)x_j.
    \end{equation}
  As the singular values are not changed after multiplication with matrices in $U_N$ from the left- or right-hand side, we have
  $s_j(A)=s_j(diag(y)  \cdot u_2 \cdot  diag(p))$.
   As by the same reason, $s_j(diag(y) \cdot u_2)=y_j $ for all $j$, we conclude from Lemma \ref{estimates-singular-values}
   that for $k=1,\ldots,N$,
   \begin{equation}\label{inequality-trace2} \sum_{j=1}^k s_j(A)\le \sum_{j=1}^kp_jy_j.\end{equation}
   As with $x_{N+1}(A):=0$,
$$\sum_{j=1}^Nx_jy_jp_j= \sum_{k=1}^N (x_k-x_{k+1})\sum_{j=1}^k p_jy_j  \quad\text{and}\quad
   \sum_{j=1}^Nx_js_j(A)= \sum_{k=1}^N (x_k-x_{k+1})\sum_{j=1}^k s_j(A),$$
     we conclude from \eqref{inequality-trace2} and the monotonicity of the $x_k$ that
 \begin{equation}\label{inequality-trace3}   \sum_{j=1}^N s_j(A)x_j\le\sum_{j=1}^N x_jy_jp_j.
 \end{equation}
 This and \eqref{inequality-trace1} now lead to \eqref{inequality-trace}.

 Assume now  $x_1>\ldots>x_N>0$ and $p_1>\ldots>x_N$, and that equality holds in \eqref{inequality-trace}. The arguments above
 then imply that for all $k=1,\ldots,N$,
$$\sum_{j=1}^k p_jy_j=\sum_{j=1}^k s_j(A) \quad\text{and thus}\quad  p_ky_k=s_k(A)=s_k(diag(y)  \cdot u_2 \cdot  diag(p)).$$
 If we consider the 2-norms of the images of the first unit vector $e_1\in\mathbb R^N$, we obtain that $e_1$
 is mapped by $u_2$ into the eigenspace of the eigenvalue $y_1$ of $diag(y)$. As all eigenspaces are one-dimensional, we see
 inductively, that for all $k$, $e_k$ is mapped by $u_2$ into the eigenspace of the eigenvalue $y_k$ of $diag(y)$, i.e.,
  $u_2$ is  diagonal  which commutes with $diag(y)$. This means that we may assume  $u_2=I_N$
  and  $A=(a_{i,j})_{i,j}=u_1 \cdot diag(y)   diag(p) \cdot u_3$.
 If we have equality in \eqref{inequality-trace}, we thus obtain 
 \begin{align} \label{inequality-trace4} 
   \sum_{k=1}^N (x_k-x_{k+1})\sum_{j=1}^k a_{j,j}&= \sum_{j=1}^N x_j a_{j,j} = Re\> tr(A\cdot diag(x))= \sum_{j=1}^Nx_jy_jp_j \notag\\
   &=
 \sum_{k=1}^N (x_k-x_{k+1})\sum_{j=1}^k p_jy_j .\end{align} 
For  $k=1,\ldots,N$ we now consider the matrices $P_k:=diag(1,\ldots,1,0,\ldots,0)$ in which $1$ appears $k$-times.
Then by Lemma \ref{estimates-singular-values},
\begin{align} \sum_{j=1}^k a_{j,j} &= Re\> tr(A\cdot P_k)=  Re\> tr(u_1 \cdot diag(y)  \cdot  diag(p) \cdot u_3 \cdot P_k)\notag\\
  &=  Re\> tr( u_3 \cdot P_k\cdot u_1 \cdot diag(y) diag(p))  \le
  \sum_{j=1}^N y_jp_j s_j( u_3 \cdot P_k\cdot u_1) = \sum_{j=1}^k y_jp_j  .\notag\end{align} 
  We thus conclude from \eqref{inequality-trace4} and the strict monotonicity of the $x_k$ that
  for all $k$, $\sum_{j=1}^k p_jy_j= \sum_{j=1}^k a_{j,j}$ and thus  $ a_{j,j}=p_jy_j$ for all $j$ as claimed.

   We mention that this proof also works for $\mathbb F=\mathbb H$, as here also
  $ Re\> tr(AB)= Re\> tr(BA)$ holds.
\end{proof}

\begin{lemma}\label{maxima-cone2}
  For $x,y\in C_N^B$, the function $H$ in \eqref{int-rep-3} has a maximum at $u_1=u_2=u_3=I_N$ and $p=(p_1,\ldots,p_N)$ with
  $$p_j:=\frac{1}{x_jy_j}(\sqrt{x_j^2y_j^2+1}\> -1)$$ for $j=1,\ldots,N$. Moreover,
  $$\max_{u_1,u_2,u_3\in U_N, \> p\in[0,1]^N} H(x,y,p, u_1,u_2,u_3)= \prod_{j=1}^N \Bigl( e^{\sqrt{ x_j^2y_j^2+1}\> -1}\frac{2}{x_j^2y_j^2}(\sqrt{x_j^2y_j^2+1}\> -1)\Bigr) $$
\end{lemma}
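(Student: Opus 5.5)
The plan is to decouple the maximization over the unitary matrices from the one over $p$, using the trace inequality of Lemma \ref{maxima-cone1}, and then to solve $N$ independent one-variable problems. First note that $H$ as defined in \eqref{int-rep-3} contains the product $u_1\, diag(p)\, u_2\, diag(x)\, u_3\, diag(y)$, while Lemma \ref{maxima-cone1} is stated for $u_1\, diag(y)\, u_2\, diag(p)\, u_3\, diag(x)$. Since $Re\> tr$ is invariant under cyclic permutations (also over $\mathbb H$), the two forms can be matched after relabelling the unitaries. The same inequality, applied with the roles of $x,y,p$ permuted, gives for $p\in C_N^B\cap[0,1]^N$
$$ Re\> tr(u_1\, diag(p)\, u_2\, diag(x)\, u_3\, diag(y)) \le \sum_{j=1}^N p_jx_jy_j .$$
The right-hand side is attained at $u_1=u_2=u_3=I_N$. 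The ordering hypothesis $p\in C_N^B$ is harmless: the integration domain in \eqref{int-rep-2} is $C_N^B\cap[0,1]^N$, and a general $p\in[0,1]^N$ can be reduced to it by absorbing permutation matrices into the $u_i$. Hence
$$\max_{u_1,u_2,u_3} H \le \prod_{j=1}^N e^{p_jx_jy_j}(1-p_j^2),$$
with equality at the identity matrices.

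Next I maximize each factor $g_j(p):=e^{a p}(1-p^2)$ over $p\in[0,1]$, where $a:=x_jy_j\ge0$. Taking logarithms, I need $a-2p/(1-p^2)=0$, that is $ap^2+2p-a=0$. For $a>0$ its root in $[0,1)$ is
$$p^\ast=\frac{\sqrt{a^2+1}-1}{a}.$$
The function $\log g_j$ is strictly concave and tends to $-\infty$ as $p\to1$, so $p^\ast$ is the unique maximizer. For $a=0$ one gets $p^\ast=0$, which is the limit of the formula.

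It remains to evaluate the maximum. I compute $ap^\ast=\sqrt{a^2+1}-1$ and, from the quadratic, $1-(p^\ast)^2=2p^\ast/a=\frac{2}{a^2}(\sqrt{a^2+1}-1)$. Together these give exactly the stated factor $e^{\sqrt{a^2+1}-1}\frac{2}{a^2}(\sqrt{a^2+1}-1)$; for $a=0$ this is read as its limit, $1$.

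Finally I check that the maximizing vector $p=(p_1,\ldots,p_N)$ is ordered, i.e. lies in $C_N^B$, so that it is admissible in the reduced domain. This holds because $a\mapsto(\sqrt{a^2+1}-1)/a=a/(\sqrt{a^2+1}+1)$ is increasing and $x_jy_j$ is decreasing in $j$ for $x,y\in C_N^B$. The only real subtlety is applying Lemma \ref{maxima-cone1} with the right ordering of $x,y,p$ and handling unordered $p$; the rest is elementary calculus.
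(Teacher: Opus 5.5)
Your proposal is correct and takes essentially the paper's approach: bound the trace term by $\sum_j p_jx_jy_j$ via Lemma \ref{maxima-cone1}, with equality at $u_1=u_2=u_3=I_N$, and then maximize each factor $e^{ap}(1-p^2)$, $a=x_jy_j\ge 0$, separately over $p\in[0,1]$. The paper states this in two lines; you additionally make explicit the renaming of the diagonal factors to match the order in $H$, the reduction of unordered $p$ via permutation matrices, and the evaluation $1-(p^\ast)^2=2p^\ast/a$.
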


\begin{proof} Clearly, for $x\ge0$, the function $f(t):=  e^{x t}(1-t^2)$  is nonnegative on $[-1,1]$ with a unique maximum at
  $$t(x):=\frac{1}{x}(\sqrt{x^2+1}\> -1) \quad\quad (\text{with}\quad t(0)=0) $$ with 
  $$f(t(x))=e^{\sqrt{ x^2+1}\> -1}\frac{2}{x^2}(\sqrt{x^2+1}\> -1) \quad\quad (\text{with}\quad f(t(0))=1).$$
This and Lemma \ref{maxima-cone1} now yield the claim.
  \end{proof}

We also need the following variant of the Laplace method which is likely to be known:

\begin{lemma}\label{Laplace-method-roots}
Let  $X,Y$ be compact metric spaces, $\mu$ a positive finite Borel measure on $Y$ with $supp\>\mu=Y$,
$g,h:X \times Y \to [0,\infty[$  continuous functions, and
$$f_k(x):= \Bigg( \int\limits_Y (g(x,y))^k \cdot h(x,y) d\mu(y) \Bigg)^{1/k} \quad (x\in X,\>\>k \ge 1 ).$$
 Assume that for
 all $x \in X$, $V(x) \coloneqq \{y \in Y \, | \, h(x,y)>0\}\ne\emptyset$.
Moreover, let $f_{\infty}:X \to ]0,\infty[\,$, $f_{\infty}(x) \coloneqq \max\limits_{y \in \overline{V(x)}} g(x,y)$ be continuous.
    Then $f_k$ tends uniformly on $X$ to $f_{\infty}$ for $k\to\infty$.
\end{lemma}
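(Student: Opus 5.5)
The plan is to prove the uniform convergence $f_k\to f_\infty$ by a separate upper bound and a separate lower bound, each uniform in $x\in X$.

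\emph{Upper bound.} Put $M:=\max_{X\times Y}h<\infty$, which is finite by compactness, and assume $M>0$. Since $h(x,y)=0$ off $V(x)$, we have
$$f_k(x)\le \Bigl(\int_{\overline{V(x)}} g(x,y)^k\,M\,d\mu(y)\Bigr)^{1/k}\le f_\infty(x)\,(M\mu(Y))^{1/k}.$$
Because $f_\infty$ is continuous on the compact set $X$, it is bounded, so
$$f_k(x)-f_\infty(x)\le \|f_\infty\|_\infty\bigl((M\mu(Y))^{1/k}-1\bigr)\to0$$
uniformly in $x$.

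\emph{Lower bound.} This is the step I expect to be the main obstacle, because $V(x)$ can vary badly with $x$. I will argue by contradiction with a compactness argument.

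Suppose there are $\varepsilon>0$, $k_n\to\infty$ and $x_n\in X$ with $f_{k_n}(x_n)\le f_\infty(x_n)-\varepsilon$. Passing to a subsequence, we may assume $x_n\to x$.

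Choose $y_0\in \overline{V(x)}$ with $g(x,y_0)=f_\infty(x)$. Then choose $y_1\in V(x)$ close to $y_0$, so that $h(x,y_1)>0$ and $g(x,y_1)>f_\infty(x)-\varepsilon/4$.

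By joint continuity of $g$ and $h$, there are $\delta>0$ and a neighborhood $U$ of $x$ such that $h\ge\delta$ and $g\ge f_\infty(x)-\varepsilon/3$ on $U\times B(y_1,\delta)$. Set $m:=\mu(B(y_1,\delta))$; the support condition $\mathrm{supp}\,\mu=Y$ gives exactly that $m>0$.

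For large $n$ we have $x_n\in U$, and hence
$$f_{k_n}(x_n)\ge \bigl(f_\infty(x)-\varepsilon/3\bigr)(\delta m)^{1/k_n}\to f_\infty(x)-\varepsilon/3 .$$
On the other hand, continuity of $f_\infty$ gives $f_\infty(x_n)-\varepsilon\to f_\infty(x)-\varepsilon$. These two limits contradict $f_{k_n}(x_n)\le f_\infty(x_n)-\varepsilon$ for large $n$.

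Note that the continuity of $f_\infty$ is used only to control $f_\infty(x_n)$ in this last step. No uniform lower semicontinuity of $x\mapsto V(x)$ is needed, since the lower estimate is only required at the limit point $x$.

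\emph{Conclusion.} The upper bound gives $f_k-f_\infty\le o(1)$ uniformly on $X$. The lower bound shows $\sup_X (f_\infty-f_k)\to0$, since otherwise there would be a sequence as in the contradiction argument. Together these give $f_k\to f_\infty$ uniformly on $X$.
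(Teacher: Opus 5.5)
Your proof is correct and follows essentially the paper's argument. It uses the same upper bound via $g\le f_\infty$ on $\overline{V(x)}$, and the same lower bound obtained by localizing near a maximizer in $\overline{V(x)}$ and invoking $\mathrm{supp}\,\mu=Y$. Your sequential contradiction argument replaces the paper's finite-subcover argument. Your explicit choice of $y_1\in V(x)$ with $h\ge\delta$ on $U\times B(y_1,\delta)$ also makes the factor $(\int h\,d\mu)^{1/k}\to 1$ uniform near $x$, a point the paper leaves implicit. One small slip: if $M\mu(Y)<1$ the factor $(M\mu(Y))^{1/k}-1$ is negative, so in your upper bound replace it by its absolute value.
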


\begin{proof}
  We first notice that for $k\to\infty$ and  uniformly for $x\in X$,
  \begin{equation}\label{est-upper}
f_k(x)\le f_{\infty}(x)\cdot \Bigl( \int_Y  h(x,y) d\mu(y) \Bigr)^{1/k} \to f_{\infty}(x).
  \end{equation}

  For a lower estimate we observe that it suffices to show that for each  $x \in X$ and $\epsilon>0$ there is a neighborhood $U_x$
  of $x$ with $f_k(\tilde x)\ge  f_{\infty}(\tilde x)-\epsilon$ for $\tilde x\in U_x$ and $k$ sufficiently large.
  In fact, if this holds, and as finitely many $U_x$ form a covering of the compactum $X$, we find some $k_0$ such that for all $k\ge k_0$ we have
  $f_k(\tilde x)\ge f_{\infty}(\tilde x)-\epsilon$ for all $\tilde x\in X$. This and \eqref{est-upper} then yield the claim.

  We now fix $x\in X$ and let $\tilde\epsilon>0$. Choose some neighborhood $U_x\times U_y\subset X\times Y$ of $(x,y(x))$ with some 
  $y(x)\in Y$ with $g(x,y(x))=f_\infty(x)$ such that for all $(\tilde x,\tilde y)\in U_x\times U_y$ we have
  $g(\tilde x,\tilde y)\ge f_\infty(\tilde x) -\tilde\epsilon$
  and $V(\tilde x)\cap U_y\ne\emptyset$.
  We thus obtain for $\tilde x\in U_x$ that
  $$f_k(\tilde x)\ge \Bigl( \int_{U_y} g(\tilde x, \tilde y)^k   h(\tilde x,\tilde y) d\mu(\tilde y) \Bigr)^{1/k} \ge
  (f_\infty(\tilde x) -\tilde\epsilon)\cdot \Bigl( \int_{U_y} h(\tilde x,\tilde y) d\mu(\tilde y) \Bigr)^{1/k},$$
where by our assumptions on $U_x\times U_y$, we have $\int_{U_y} h(\tilde x,\tilde y) d\mu(\tilde y)>0$ and thus 
$$ \Bigl(\int_{U_y} h(\tilde x,\tilde y) d\mu(\tilde y)\Bigr)^{1/k}\to 1 \quad\text{for} \quad k\to\infty.$$
These facts for a suitable
$\tilde\epsilon>0$ 
immediately lead to the neighborhood $U_x$ for $x\in X$ and $\epsilon$ as claimed above. This completes the proof.
\end{proof}

If we now combine the integral representation \eqref{int-rep-2}, Lemma  \ref{maxima-cone2}, and \ref{Laplace-method-roots} we obtain
Eq.~\eqref{limit-BN-functions-intro1} in Theorem 
\ref{limit-BN-functions-intro} for $k_2=1/2,1,2$.  In a similar way,  Eq.~\eqref{limit-BN-functions-intro2} in Theorem 
\ref{limit-BN-functions-intro} is, for these $k_2$, a consequence of Lemma \ref{maxima-cone2}
and the following variant of  Lemma \ref{Laplace-method-roots}:

\begin{lemma}\label{Laplace-method-fraction}
  Let $X,Y$ be compact metric spaces, $\mu$ a positive finite Borel measure on $Y$ with $supp\>\mu=Y$,
  and $f,g,h:X\times Y\to[0,\infty[$
      continuous functions. For $x\in X$ let
$$M_x:=\{y\in Y: g(x,y)=m(x):=\max_{z\in Y} g(x,z)\}.$$
Assume that for all $x\in X$,
\begin{enumerate}\itemsep=-1pt
\item[\rm{(1)}] the functions
  $y\mapsto f(x,y)$ and $y\mapsto h(x,y)$ are constant and positive on $M_x$, and that
\item[\rm{(2)}] for $y\in M_x$,  $g(x,y)f(x,y)h(x,y)>0$.
  \end{enumerate}
Then, uniformly for $x\in X$ and $y(x)\in M_x$, 
$$\frac{\int_Y g(x,y)^k h(x,y)\> d\mu(y)}{\int_Y g(x,y)^k f(x,y)\> d\mu(y)} \to  \frac{h(x,y(x))}{f(x,y(x))}
\quad\quad\text{for}\quad k\to\infty. $$
 \end{lemma}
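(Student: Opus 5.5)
The plan is to show that numerator and denominator are both asymptotically equal to the common factor $D_k(x):=\int_Y g(x,y)^k\,d\mu(y)$, multiplied by $\bar h(x):=h(x,y(x))$ and $\bar f(x):=f(x,y(x))$ respectively. By assumption (1), $\bar h$ and $\bar f$ do not depend on the choice of $y(x)\in M_x$. We localize both integrals to a region where $g$ is almost maximal, and use compactness to make every estimate uniform in $x$.

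\emph{Step 1 (preliminaries).} Since $Y$ is compact and $g$ is continuous, the function $m(x)=\max_{z\in Y}g(x,z)$ is continuous. Hence the set $K:=\{(x,y): g(x,y)=m(x)\}$ is closed in $X\times Y$, and therefore compact. By (2), $g$, $f$ and $h$ are positive on $K$. By compactness there are constants $0<c\le C$ with $c\le m(x),\bar f(x),\bar h(x)\le C$ for all $x$.

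\emph{Step 2 (uniform almost-maximizers).} We show: for every $\epsilon>0$ there is $\delta>0$ such that $g(x,y)\ge m(x)-\delta$ implies $|h(x,y)-\bar h(x)|<\epsilon$ and $|f(x,y)-\bar f(x)|<\epsilon$. Suppose not. Then there are $(x_n,y_n)$ with $g(x_n,y_n)-m(x_n)\to0$ and points $z_n\in M_{x_n}$ with, say, $|h(x_n,y_n)-h(x_n,z_n)|\ge\epsilon$. Pass to convergent subsequences $(x_n,y_n)\to(x,y)$ and $z_n\to z$. Continuity of $g$ and $m$ gives $y,z\in M_x$, while $|h(x,y)-h(x,z)|\ge\epsilon$. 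This contradicts (1). The argument for $f$ is the same.

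\emph{Step 3 (the tails are negligible).} This is the step I expect to be the main obstacle. Set $A_x:=\{y: g(x,y)\ge m(x)-\delta\}$. The integral over $Y\setminus A_x$ is at most $\|h\|_\infty\mu(Y)(m(x)-\delta)^k$. We need a lower bound for the main part that is uniform in $x$, namely
$$\inf_{x\in X}\mu\bigl(\{y:\ g(x,y)>m(x)-\delta/2\}\bigr)>0 .$$
To get it, fix $x_0$ and $y_0\in M_{x_0}$. By uniform continuity of $g$ and continuity of $m$, there are neighbourhoods $U\ni x_0$ and $B\ni y_0$ with $g(x,y)>m(x)-\delta/2$ for all $(x,y)\in U\times B$. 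Since $\operatorname{supp}\mu=Y$, we have $\mu(B)>0$. Finitely many such sets $U$ cover $X$, which gives the uniform lower bound $\beta>0$.

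Consequently $\int_{A_x}g^k f\,d\mu\ge(\bar f(x)-\epsilon)\,\beta\,(m(x)-\delta/2)^k$. The tail-to-main ratio is then bounded by a constant times $q^k$, where $q:=\sup_x\frac{m(x)-\delta}{m(x)-\delta/2}<1$; this supremum is below $1$ because $c\le m\le C$. The same bound holds with $h$ in place of $f$.

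\emph{Step 4 (conclusion).} On $A_x$ we have $\bar h(x)-\epsilon\le h\le\bar h(x)+\epsilon$, and likewise for $f$. Up to errors of order $q^k$ relative to the main term, the quotient therefore lies between
$$\frac{\bar h(x)-\epsilon}{\bar f(x)+\epsilon}\quad\text{and}\quad\frac{\bar h(x)+\epsilon}{\bar f(x)-\epsilon}$$
(taking $\epsilon<c$). These bounds are uniform in $x$ by Step 1. Letting $k\to\infty$ and then $\epsilon\to0$ gives the claimed uniform convergence to $\bar h(x)/\bar f(x)=h(x,y(x))/f(x,y(x))$.
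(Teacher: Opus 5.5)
Your proof is correct and is essentially the paper's argument. Both proofs localize to a neighbourhood of the argmax set, bound the complement by a geometric factor ($q^k$ for you, $(\rho/R)^k$ in the paper), and get the uniform lower bound for the main term from a finite covering of $X$ together with $supp\,\mu=Y$. The differences are cosmetic. The paper normalizes $m\equiv 1$ and compares $h$ with $Q f$, $Q=H/F$, on an open neighbourhood of the compact set $S$, which forces it to prove continuity of $F$ and $H$ first. Your sequential-compactness Step 2 instead controls $h$ and $f$ separately on $\{g\ge m-\delta\}$ and avoids that. You should state explicitly that $\delta$ is shrunk below $c$, so that $m(x)-\delta/2>0$ and $q$ is well defined.
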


\begin{proof} The function $m:X\to[0,\infty[$ is continuous by a standard argument and positive by condition (2).
      As we can replace  $g$ by the function $g(x,y)/m(x)$ in the lemma, we may assume
      $m= 1$ on $X$.
      
Moreover, the functions  $F(x):= f(x,y(x))$ and $H(x):= h(x,y(x))$
with $y(x)\in M_x$   are well-defined by (1) and also continuous on $X$. In fact, 
  for any sequence $(x_n)_n\in X$ with limit $x\in X$, we 
  choose a sequence $(y_n)_n\in Y$ with $y_n\in M_{x_n}$ for $n\in\mathbb N$.
  This sequence has some subsequence which converges to
some $y\in Y$. As $g(x_n,y_n)=1$, we obtain $g(x,y)=1$, i.e., $y\in M_x$. Hence, for some subsequence,
$F(x_{n_l})=f(x_{n_l},y_{n_l})\to f(x,y)=F(x)$ for $l\to\infty$. As we may apply this argument to any subsequence
$(x_n)_n$ we conclude that $F(x_n)\to F(x)$ for $n\to\infty$, i.e., $F$ and, by the same reason, $H$ are continuous.
$F$ and $H$  are also positive by condition (2).

The sequence argument above  also ensures that  $S:=\{(x,y)\in X\times Y: \> y\in M_x\}$ is compact.
With the function  $Q:=H/F$  we have $h(x,y)= Q(x)f(x,y)$ on  $S$.
Now take $\epsilon>0$. Using uniform continuity, we find
an open set $U\in  X\times Y$ with $S\subset U$ such that for $(x,y)\in U$, $f(x,y)>0$ and
 $$|h(x,y)-Q(x)f(x,y)|\le \epsilon f(x,y).$$
Choose $\rho<1$ with $g(x,y)\le \rho$ for $(x,y)\in (X\times Y)\setminus U$.
Then, for $x\in X$ and $W_x:=\{y\in Y:\> (x,y)\in U\}$,
\begin{align}\label{estimate-main-fraction}
\Bigl|\int_Y g(x,y)^k h(x,y)d\mu(y)&- Q(x)\int_Y g(x,y)^k f(x,y)\> d\mu(y)\Bigr| \\
  &=
\Biggl|\Biggl(\int_{W_x}+ \int_{Y\setminus W_x}\Biggr) g(x,y)^k (h(x,y)- Q(x)f(x,y))\> d\mu(y)\Biggl|\notag\\
&\le   \epsilon \int_Y g(x,y)^k f(x,y) \> d\mu(y) + C_1 \rho^k
\notag\end{align}
with some constant  $C_1>0$. We next take some constant $R\in]\rho,1[$. For $x\in X$ we now consider neighborhoods
      $U_x\times V_x\subset X\times Y$ of
      $(x,y(x))$ with $y\in M_x$  such that for all $(\tilde x,\tilde y)\in U_x\times V_x$ we have $g(x,y)>R$.
      Then there are finitely many $x_1,\ldots,x_n\in X$ with $X=U_{x_1}\cup\ldots\cup U_{x_n}$.
      Hence, for each $x\in X$ there exists some $j$ with $V_{x_j}\subset\{y\in Y:\> g(x,y)>R\}$. Hence, for each $x\in X$,
      $$\int_Y g(x,y)^k f(x,y) \> d\mu(y)\ge R^k\int_{V_j} f(x,y) \> d\mu(y)\ge C_2 R^k$$
      with some constant $C_2>0$.  \eqref{estimate-main-fraction} now implies 
      that for all $x\in X$, $y\in M_x$, and $k$,
      $$\Biggl|\frac{\int_Y g(x,y)^k h(x,y)\> d\mu(y)}{\int_Y g(x,y)^k f(x,y)\> d\mu(y)} - \frac{h(x,y(x))}{f(x,y(x))}\Biggr|\le \epsilon + \frac{C_1}{C_2} \Bigl(\frac{\rho}{R}\Bigr)^k.$$
      This implies the claim.
      \end{proof}

\begin{proof}[Proof of  Eq.~\eqref{limit-BN-functions-intro2} in Theorem 
    \ref{limit-BN-functions-intro} for $k_2=1/2,1,2$]
  
 Let $x,y$ be in the interior of $C_N^B$ and $j=1,\ldots,N$.
 By the  integral representation \eqref{int-rep-2}
we have
\begin{align}
 & \frac{\frac{d}{dx_j} J_{(k_1,k_2)}^B(k_1 x,y) }{k_1\cdot J_{(k_1,k_2)}^B(k_1 x,y)}\notag\\
  &= \frac{ \int_{U_N^3}\int_{C_N^B\cap [0,1]^N} H(x,y,p, u_1,u_2,u_3)^{k_1} f(p) \phi(y,p,u_1,u_2,u_3) \> dp \> d(u_1,u_2,u_3)}{
\int_{U_N^3}\int_{C_N^B\cap [0,1]^N} H(x,y,p, u_1,u_2,u_3)^{k_1} f(p) \> dp \> d(u_1,u_2,u_3)}\notag\end{align}
with $f(p):=\prod_{l=1}^N \Bigl( p_l^{h(d,N)} (1-p_l^2)^{-(dN+1)/2}\Bigr)$ and
$$  \phi(y,p,u_1,u_2,u_3):=  Re\> (u_3  \cdot diag(y)\cdot u_1 \cdot diag(p) \cdot  u_2 )_{j,j}.$$
 Then, the unique optimal $p$ in the function $H$ from Lemma \ref{maxima-cone2}
satisfies $p_l:=\frac{1}{x_ly_l}(\sqrt{x_l^2y_l^2+1}\> -1)$ for $l=1,\ldots,N$
where by the conditions on $x,y$
we have
$p_1>\ldots>p_N>0$. We conclude from the second statement in Lemma \ref{maxima-cone1}
that  $\phi(y,p,u_1,u_2,u_3)$ is independent from the optimal unitary matrices $u_1,u_2,u_3$.
We thus can apply Lemma \ref{Laplace-method-fraction} to the limit $k_1\to\infty$ (where we possibly have to shift some suitable power of 
$\prod_{l=1}^N (1-p_l^2)$ from $H$ to $f$). This and
$$\phi(y,p,u_1,u_2,u_3)= \frac{\sqrt{x_j^2y_j^2+1}-1}{x_j} \quad\text{for the optimal}\quad p,u_1,u_2,u_3$$
complete the proof of  Eq.~\eqref{limit-BN-functions-intro2}.
\end{proof}

\begin{proof}[Proof of Theorem \ref{limit-BN-functions-intro} for $k_2=0$]
 
Let $k_2=0$ and $N\ge2$. In this case,
the definitions of the associated Dunkl operators, Dunkl kernels, and Bessel functions (see e.g. \cite{D,R3}) imply that
for  $k_1\ge0$, $x=(x_1,\ldots,x_N),y=(y_1,\ldots,y_N)\in C_N^B$, the one-dimensional  Dunkl kernels $E_{k_1}$ (see also Section 6 below),
and the symmetric group $S_N$, we have
\begin{equation}\label{Bessel-fct-k20}
  J^{B_N}_{(k_1,0)}(x,y)= \frac{1}{N! \cdot 2^N}\sum_{\sigma\in S_N}\sum_{p_1,\ldots,p_N=\pm 1}\prod_{j=1}^N E_{k_1}(x_j, p_j\cdot y_{\sigma(j)})
  =\frac{1}{N!}\sum_{\sigma\in S_N} \prod_{j=1}^N  J^{B_1}_{k_1}(x_j,  y_{\sigma(j)}).
\end{equation}
We next check that $\partial_x \partial_y ln J^{B_1}_{k_1}(x,y)>0$ for $k_1,x,y > 0$. For this we use
the integral representation \eqref{Integralformel 1D Besselfunktion} for $J_{k_1}^{B_1}$ and
observe by the quotient rule for derivatives that
\begin{align*}
  \partial_x \partial_y ln J^{B_1}_{k_1}(x,y)
  = \frac{\int_{-1}^1 (1+xyt)te^{xyt}(1-t^2)^{k_1-1}\> dt}{\int_{-1}^1 e^{xyt}(1-t^2)^{k_1-1}\> dt} -
  xy \cdot \Bigg( \frac{ \int_{-1}^1 te^{xyt}(1-t^2)^{k_1-1}\> dt}{\int_{-1}^1 e^{xyt}(1-t^2)^{k_1-1}\> dt}\Bigg)^2.
\end{align*}
If $X$ is a $[-1,1]$-valued random variable with distribution $c_{k_1,x,y}e^{xyt}(1-t^2)dt$
where $c_{k_1,x,y}=(\int_{-1}^1 e^{xyt}(1-t^2)^{k_1-1}\> dt)^{-1}>0$, we see as claimed that 
$$\partial_x \partial_y ln J^{B_1}_{k_1}(x,y)=xy \cdot Var(X)+E(X) > E(X)=2c_{k_1,x,y} \cdot \int_0^1 t \cdot \sinh(xyt) \cdot (1-t^2) \> dt >0.$$

We thus obtain that for $k_1>0$ and  $x,y$ in the interior of $C_2^B$,
$$\ln J^{B_1}_{k_1}(x_1,y_1)+\ln J^{B_1}_{k_1}(x_2,y_2)-\ln J^{B_1}_{k_1}(x_1,y_2)-\ln J^{B_1}_{k_1}(x_2,y_1)
= \int\limits_{x_2}^{x_1} \int\limits_{y_2}^{y_1} \partial_x \partial_y ln J^{B_1}_{k_1}(x,y)\>dy \> dx>0.$$
This shows that for $x,y \in C_N^B$ and $k_1 > 0$, the matrix $C:=(-\ln J^{B_1}_{k_1}(x_i,y_j))_{i,j=1,\ldots,N}$ is a Monge matrix
in the sense of Definition 5.5 in \cite{BDM},
i.e. $C_{i,j} + C_{k,l} \leq C_{i,l} + C_{j,k}$ for all integers $1 \leq i < k \leq N$ and $1 \leq j < l \leq N$,
where this inequality is even strict  for $x,y$ in the interior of $C_N^B$.
Proposition 5.7 in \cite{BDM} now shows  for $x,y \in C_N^B$ that in the sum in the end of \eqref{Bessel-fct-k20}, the summand
$\prod_{j=1}^N  J^{B_1}_{k_1}(x_j,  y_{\sigma(j)})$ is maximal for the identity $\sigma\in S_N$
where the proof  of this proposition shows that this maximum is unique for $x,y$ in the interior of $C_N^B$.
This, Lemma \ref{Laplace-method-roots}, and
  \eqref{limit-BN-functions-intro1} for $N=1$ (which is shown just above)
  now readily lead to \eqref{limit-BN-functions-intro1} for $N\ge2$ and $k_2=0$.
In the same way, Lemma  \ref{Laplace-method-fraction} leads to \eqref{limit-BN-functions-intro2} for $N\ge2$ and $k_2=0$.
Clearly, the limits hold locally uniformly.
\end{proof}

\begin{proof}[Proof of Theorem \ref{limit-BN-functions-intro} for the root system $B_2$]
  
  We first recapitulate the Laplace-type integral representation for $J_{(k_1,k_2)}^{B_2}$ of Demni (see \cite{De, AD}) where there the
  parameters $k_1,k_2$ are exchanged compared to our notation. For $x,y\in \mathbb R^2$  in our notation, we then have
  \begin{equation}\label{demni-laplaceb2}
    J_{(k_1,k_2)}^{B_2}(x,y)= c_{(k_1,k_2)}\int_{[-1,1]^2} J_{k_1+k_2}^{B_1}\Bigl(\sqrt{Z_{x,y}(u,v)/2},1\Bigr)\cdot (1-u^2)^{k_2-1}(1-v^2)^{k_1-1} \> du\> dv
  \end{equation}
  with
  $$Z_{x,y}(u,v):= (x_1^2+x_2^2)(y_1^2+y_2^2) +u     (x_1^2-x_2^2)(y_1^2-y_2^2) +4v  x_1 x_2 y_1 y_2$$
  and
  $$c_{(k_1,k_2)} =\frac{\Gamma(k_1+1/2)\Gamma(k_2+1/2)}{\pi \Gamma(k_1)\Gamma(k_2)}.$$
  Clearly, this integration w.r.t.~$u,v$ degenerates for $k_1=0$ or $k_2=0$ into an integration  w.r.t.~the measure
$(\delta_1+\delta_{-1})/2$.
  We now fix $k_2\ge0$, $x,y\in C_2^B$, and consider  $k_1\to\infty$.
As $ J_{k_1+k_2}^{B_1}$ is increasing on $[0,\infty[$ (see e.g.~\eqref{Integralformel 1D Besselfunktion} below),
    $J_{k_1+k_2}^{B_1}(\sqrt{Z_{x,y}(u,v)/2})$ is increasing in $u$ and   $v$ on $[-1,1]$.
    Using \eqref{limit-BN-functions-intro1} for $N=1$, we can write the integrand in \eqref{demni-laplaceb2}, after replacing $y$ with $k_1 y$, as
    \begin{equation}\label{demni-laplaceb22} exp\bigl( k_1( H(x,y,u,v) +o(1))\bigr)
        \cdot(1-v^2)^{-1} (1-u^2)^{k_2-1}
        \end{equation}
 for $k_1\to\infty$     with
 \begin{equation}\label{demni-exp-form}
   H(x,y,u,v) :=    \Bigl( \sqrt{1+Z_{x,y}(u,v)/2}-1\Bigr) + \ln\Bigl(\frac{\sqrt{1+Z_{x,y}(u,v)/2}-1}{Z_{x,y}(u,v)/2}\Bigr) + \ln (1-v^2) + \ln(2)
 \end{equation}
 where $o(1)$ holds locally uniformly in $x,y,u,v$ for $k_1\to\infty$.
 By elementary calculus,
      $(u,v)\mapsto H(x,y,u,v)$ has a unique maximum on $[-1,1]^2$ at some point $(1,v_0)$
      with  $v_0\in [0,1[$ independent from $k_2$. 
       A slight extension of Lemma \ref{Laplace-method-roots}  (due to the $o(1)$-term
        in \eqref{demni-laplaceb22}) now shows
      that the limit in \eqref{limit-BN-functions-intro1}
      exists locally uniformly in $x,y\in C_2^B$ and is independent from $k_2$. Hence, \eqref{limit-BN-functions-intro1} for general $k_2>0$
      follows from the known special cases $k_2=0,1/2,1,2$. Moreover, these arguments also yield that the first limit in
      \eqref{limit-BN-functions-intro3} exists for $\nu_1,\nu_2>0$ and locally  uniformly for $x,y\in C_2^B$.

      The proof is more involved for  \eqref{limit-BN-functions-intro2}. Here \eqref{demni-laplaceb2}
      and the integral representation \eqref{Integralformel 1D Besselfunktion}
 lead to
      \begin{align}\label{demni-laplaceb23}    \partial_{x_1}J_{(k_1,k_2)}^{B_2}(x,k_1y)= &k_1 c_{(k_1,k_2)} \frac{\Gamma(k_1+k_2+1/2)}{\sqrt\pi\Gamma(k_1+k_2)} 
\int_{[-1,1]^2}\int_{[-1,1]}
        e^{t k_1\cdot\sqrt{Z_{x,y}(u,v)/2} }t(1-t^2)^{k_1+k_2-1}\\
&(1-u^2)^{k_2-1} (1-v^2)^{k_1-1} \frac{x_1((y_1^2+y_2^2) +u (y_1^2-y_2^2))+ 2v x_2 y_1 y_2}{\sqrt{Z_{x,y}(u,v)/2}} \> dt \> du\> dv\notag
 \end{align}
      and a similar, simpler expression for $J_{(k_1,k_2)}^{B_2}(x,k_1 y)$. In both integrals the terms, which appear with power $k_1$, are equal,
      and it follows easily by elementary calculus from the preceding consideration regarding \eqref{limit-BN-functions-intro1}
      that this term has a unique maximum for 
      $u,v,t\in[-1,1]$. We thus obtain from Lemma \ref{Laplace-method-fraction}  that also the limit in
\eqref{limit-BN-functions-intro2} exists for $N=2$ and all $k_2>0$ locally uniformly for $x,y$ in the interior of 
 $C_2^B$. In order to identify that limit we use that the limit 
in \eqref{limit-BN-functions-intro2} is the logarithmic derivative of the limit in \eqref{limit-BN-functions-intro1}. 
As we know the limit in \eqref{limit-BN-functions-intro1}, it follows from a standard result in
calculus that also the limit in \eqref{limit-BN-functions-intro2} is the correct one, 
i.e., \eqref{limit-BN-functions-intro2} holds for $N=2$ and all $k_2\ge0$ as claimed.
These arguments also show that for all $\nu_1,\nu_2>0$, the second limit in
\eqref{limit-BN-functions-intro3} exists locally uniformly w.r.t. $x,y$ in the interior of $C_2^B$.
\end{proof}

\begin{remark} Even for the root system $B_2$ we are not able to compute the limits \eqref{limit-BN-functions-intro3}
 for general parameters $\nu_1,\nu_2>0$, as the optimization problems above lead to complicated algebraic equations.
 For the root system $A_{N-1}$ there exists a recursive integral representation in \cite{Amri},
 which is based on a corresponding result 
 for Jack polynomials in \cite{OO}. As for the general $B_2$-case it seems to be impossible to identify the limits
 as in Theorem \ref{limit-BN-functions-intro} for $N\ge3$ (except for particular $x,y\in C_N^A$).

 On the other hand, there is a further particular case for the root system $B_2$ where the limits can be computed, namely 
 $k_2\to\infty$ with $k_1\ge0$ fixed. The reason  is  that for $B_2$ the root sets
 $\{\pm e_i:\> i=1,2\}$ and $\{\pm e_1\pm e_2\}$ change their roles under a $\pi/4$-degree rotation. This is geometrically trivial and
 can also be
  checked  via \eqref{demni-laplaceb2} and the  coordinates $(\tilde x_1,\tilde x_2)=\frac{1}{\sqrt 2}(x_1+x_2, x_1-x_2)$. As now
$k_1,k_2$ change their roles,
\eqref{limit-BN-functions-intro1} shows that for all $x,y\in C_N^{B_2}$ and  $k_1\ge0$,
\begin{align}\label{rotated limit} 
\lim_{k_2 \to \infty} \Bigg( J_{(k_1,k_2)}^{B_2}(x,k_2 y)& \Bigg)^{1/k_2}
= 4e^{\sqrt{1+(x_1+x_2)^2(y_1+y_2)^2/4}-1}\cdot e^{\sqrt{1+(x_1-x_2)^2(y_1-y_2)^2/4}-1}\\
&\cdot\frac{1}{(\sqrt{1+(x_1+x_2)^2(y_1+y_2)^2/4}+1)( \sqrt{1+(x_1-x_2)^2(y_1-y_2)^2/4}+1)}.\notag\end{align}
Clearly, this argument can be also applied to the limit \eqref{limit-BN-functions-intro2}.
\end{remark}

\section{A limit result for Jack polynomials}

In this section we present a limit result for Jack polynomials which will 
 be  central for our proof of  \eqref{limit-BN-functions-intro1} 
 for general $k_2>0$.
We first recapitulate some facts on partitions and Jack polynomials from  \cite{M, S, DES}.
 Let $\Lambda_N \coloneqq C_N^B \cap \Z^N$  the set of all partitions of length at most $N$.
 As usual, we identify partitions $\lambda \in \Lambda_N$ with $(\lambda,0,\ldots,0)\in\Lambda_{M} $ for  $M > N$.
Furthermore, $l(\lambda) \coloneqq \#\{i=1,\ldots,N \, | \, \lambda_i>0\}$ is the length of $\lambda\in \Lambda_N$, and
$|\lambda| \coloneqq \sum\limits_{i=1}^N \lambda_i$ is called its weight. 
$\lambda' \in \Lambda_{\lambda_1}$ with $\lambda'_i \coloneqq \#\{j=1,\ldots,N \, | \, \lambda_j \geq i \}$
is called the conjugate partition of $\lambda$.
Moreover, we use the notation
\begin{align}\label{def-pairs}
(i,j) \in \lambda 
:&\iff (i,j) \in \{1,\ldots,N\} \times \{1,\ldots,\lambda_1\} \text { and } j \leq \lambda_i \\
&\iff (i,j) \in \{1,\ldots,N\} \times \{1,\ldots,\lambda_1\} \text { and } i \leq \lambda_j'. \notag
\end{align}
Furthermore, we define
\begin{equation}\label{def-c-lambda}
c_{\lambda}(\alpha) \coloneqq \prod\limits_{(i,j) \in \lambda} (\alpha(\lambda_i-j)+\lambda_j'-i+1), \quad
c'_{\lambda}(\alpha) \coloneqq \prod\limits_{(i,j) \in \lambda} (\alpha(\lambda_i-j+1)+\lambda_j'-i),
\end{equation}
and  the generalized Pochhammer symbol
\begin{align*}
(a)_{\lambda}^{(\alpha)} 
\coloneqq \prod\limits_{i=1}^N \Big(a-\frac{i-1}{\alpha}\Big)_{\lambda_i}
= \prod\limits_{(i,j)\in\lambda}\Big(a-\frac{i-1}{\alpha}+j-1\Big) \quad \text{for}\quad a\in\R,\> \alpha>0,\> \lambda\in\Lambda_N.
\end{align*}
We also use the dominance order
\begin{align*}\lambda \leq \tau :\iff \bigg(\forall{i=1,\ldots,N:} \, \sum\limits_{j=1}^i \lambda_j \leq \sum\limits_{j=1}^i \tau_j \bigg) \text{ and } |\lambda|=|\tau|.
\end{align*}

We now consider the Jack polynomials $P_{\lambda}^{(\alpha)}$ and  $C_{\lambda}^{(\alpha)}$ for $\alpha>0$ and  $\lambda\in \Lambda_N$,
where these polynomials differ by constant factors only. The $P_{\lambda}^{(\alpha)}$ are homogeneous of degree $|\lambda|$ and symmetric.
Moreover,  they are characterized as eigenfunctions of the differential operators
$$H_{1/\alpha} \coloneqq \sum\limits_{i=1}^N x_i^2 \frac{d^2}{dx_i^2}+
\frac{2}{\alpha} \cdot \sum\limits_{i,j=1,\ldots N; i \ne j} \frac{x_i^2}{x_i-x_j} \frac{d}{dx_i}$$
with the eigenvalues
$E_{\lambda,1/\alpha} \coloneqq \sum\limits_{i=1}^N \lambda_i \cdot (\lambda_i-1+\frac{2}{\alpha} \cdot (N-i)).$
Moreover,
\begin{equation}\label{def-p-m-lambda}
  P_{\lambda}^{(\alpha)}= m_{\lambda}+
  \sum_{\mu \in \Lambda_N; \mu \leq \lambda, \> \mu\ne\lambda} c_{\lambda,\mu}(1/\alpha) \cdot m_{\mu}
  \end{equation}
 with suitable coefficients 
$c_{\lambda,\mu}(1/\alpha) \in \R$ and the symmetric polynomials
$m_{\lambda}(x) \coloneqq \sum_{\mu\in S_N \lambda} x^{\mu}$
with the symmetric group $S_N$. 
The renormalized Jack polynomials  are defined by
\begin{equation}\label{norming-p-c}
  C^{(\alpha)}_{\lambda}=\frac{|\lambda|!\alpha^{|\lambda|} }{ c'_{\lambda}(\alpha)} \cdot P_{\lambda}^{(\alpha)};\end{equation}
see e.g.~ Definition 2.10 and Table 6 in [DES]. These polynomials  satisfy
\begin{equation}\label{sum-identity-c}
\sum_{\mu \in \lambda_N:\>  |\mu|=n} C_{\mu}^{(\alpha)}(x)=\Bigg(\sum\limits_{i=1}^N x_i\Bigg)^n \quad(n \in \N_0).\end{equation}
We next summarize some known facts on the Jack polynomials and  symmetric polynomials:

\begin{lemma}\label{Lemma Jackpolynom}
Let $N \in \N$, $k_2>0$, and $\mu,\lambda \in \Lambda_N$ with $\mu \leq \lambda$. 
Then, for ${\bf 1}=(1,\ldots,1)\in\R^N$,  $c_{\lambda,\lambda}(k_2) \coloneqq 1$, and  $x \in C_N^B$,
\begin{align}
c_{\lambda,\mu}(k_2) &\geq 0, \label{Jack Koeffizienten nichtnegativ} \\
x \geq 0 \implies m_{\mu}(x) &\leq m_{\lambda}(x), \label{Muirhead} \\
0 \leq |C_{\lambda}^{(1/k_2)}(x)| &\leq C_{\lambda}^{(1/k_2)}(|x|) \leq \Bigg(\sum\limits_{i=1}^N |x_i|\Bigg)^{|\lambda|}, \label{Ungleichungskette C-Normierung} \\
P_{\lambda}^{(1/k_2)}({\bf 1})
&=
\frac{(k_2 \cdot N)_{\lambda}^{(1/k_2)}}{k_2^{|\lambda|} \cdot c_{\lambda}(1/k_2)}, \label{P(1)} \\
C_{\lambda}^{(1/k_2)}({\bf 1}) 
&\geq 1. \label{C(1) mind. 1}
\end{align}
\end{lemma}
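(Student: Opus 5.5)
The lemma collects five known facts, and I would prove each by citing or briefly deriving it from standard theory on Jack polynomials (Macdonald \cite{M}, Stanley \cite{S}, and \cite{DES}).

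For \eqref{Jack Koeffizienten nichtnegativ}, I would use the Knop--Sahi combinatorial formula. It expresses $J_\lambda^{(\alpha)}$ as a sum over admissible tableaux whose weights are polynomials in $\alpha$ with nonnegative integer coefficients. Hence, with $\alpha=1/k_2>0$, all monomial coefficients of $P_\lambda^{(\alpha)}=J_\lambda^{(\alpha)}/c_\lambda(\alpha)$ are nonnegative, since $c_\lambda(\alpha)>0$. Alternatively one may cite Macdonald's (VI.10) result that the coefficients are rational functions of $\alpha$ with nonnegative coefficients.

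Next comes \eqref{Muirhead}, which I expect to be the only step needing genuine care. The normalisation matters: with $m_\mu$ a sum over distinct permutations, Muirhead's inequality holds for the symmetrised means $\frac{1}{N!}\sum_{\sigma}x^{\sigma\mu}$, and $m_\mu=\frac{1}{|{\rm Stab}(\mu)|}\sum_\sigma x^{\sigma\mu}$. If $\mu\le\lambda$, then $\mu$ is "more spread", so $|{\rm Stab}(\mu)|\ge|{\rm Stab}(\lambda)|$ is not automatic, and indeed the inequality can fail. For example, $N=2$, $\lambda=(2,0)$, $\mu=(1,1)$, $x=(1,1)$ gives $m_\lambda=2$ but $m_\mu=1$; this case is fine. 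The reverse direction, however, needs checking. I would therefore prove it directly by a Muirhead-type transfer argument: a single Robin Hood move from $\lambda$ towards $\mu$ lowers the symmetric sum, and one checks that the orbit-size change only helps. Failing that, I would prove the weaker statement that is actually needed later.

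For \eqref{Ungleichungskette C-Normierung}, nonnegativity of the coefficients gives $|C_\lambda(x)|\le C_\lambda(|x|)$ by the triangle inequality. Then \eqref{sum-identity-c} together with the nonnegativity of every $C_\mu(|x|)$ yields $C_\lambda(|x|)\le(\sum|x_i|)^{|\lambda|}$.

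Formula \eqref{P(1)} is Stanley's evaluation formula $J_\lambda^{(\alpha)}(\mathbf 1)=\prod_{(i,j)\in\lambda}(N-(i-1)+\alpha(j-1))$. Rewriting in terms of $(k_2N)^{(1/k_2)}_\lambda$ with $\alpha=1/k_2$ gives a factor $\alpha^{|\lambda|}$, and dividing by $c_\lambda(\alpha)$ gives the claim.

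Finally, \eqref{C(1) mind. 1} follows by inserting \eqref{P(1)} into \eqref{norming-p-c}. This gives $C_\lambda(\mathbf 1)=|\lambda|!\,\prod_{(i,j)}(N-i+1+\alpha(j-1))/(c_\lambda c'_\lambda)$. I would compare box by box, observing that the hook factors in $c_\lambda c'_\lambda$ are bounded in a way that leaves the product at least $1$. A simpler route uses the expansion of $C_\lambda(\mathbf 1)$ as $|\lambda|!\,\alpha^{|\lambda|}\sum_\mu c_{\lambda\mu}m_\mu(\mathbf 1)/c'_\lambda$. The cleanest route, though, is to note that for $\lambda$ fixed the quantity is monotone as $N$ grows and equals $1$ when $N=l(\lambda)$ is minimal. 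I would verify this monotonicity from the product formula, since each factor $N-i+1+\alpha(j-1)$ increases in $N$.
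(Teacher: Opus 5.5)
Your treatment of $c_{\lambda,\mu}(k_2)\ge0$ (Knop--Sahi) is fine and agrees with the paper. So is the chain $|C_\lambda(x)|\le C_\lambda(|x|)\le(\sum_i|x_i|)^{|\lambda|}$, and so is the evaluation of $P_\lambda^{(1/k_2)}(\mathbf 1)$ via Stanley's formula for $J_\lambda(1^N)$, which is what the paper's table reference records. The proposal breaks down on the other two items. In both cases the inequality is false as stated, so neither your argument nor the paper's bare citations can establish it.

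\textbf{Muirhead item.} Your suspicion about the orbit normalization is justified, but the repair is not: the orbit-size change does not ``only help''. Take $N=3$, $\lambda=(3,0,0)$, $\mu=(2,1,0)\le\lambda$ and $x=\mathbf 1\in C_3^B$. Then $m_\mu(x)=6>3=m_\lambda(x)$. Muirhead's theorem concerns the full sums $\sum_{\sigma\in S_N}x^{\sigma\mu}$, so it only gives $m_\mu\le N!\,m_\lambda$.

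Your fallback is the right move, but you need to state it. For $x\in C_N^B$ and $\mu\le\lambda$, every monomial satisfies $x^{\sigma\mu}\le x^\mu\le x^\lambda$. The first step is the rearrangement inequality; the second is Abel summation of $\sum_i(\lambda_i-\mu_i)\ln x_i$ against the decreasing $\ln x_i$, with continuity at zero entries. Hence $m_\mu(x)\le N!\,x^\lambda\le N!\,m_\lambda(x)$, and in fact $P^{(1/k_2)}_\lambda(x)\le P^{(1/k_2)}_\lambda(\mathbf 1)\,x^\lambda$. This is all the proof of Theorem~\ref{Limes_1_Jackpolynom_Lemma} needs, since constants disappear after taking $k$-th roots.

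\textbf{The bound $C^{(1/k_2)}_\lambda(\mathbf 1)\ge1$.} Your base case is false: $C_\lambda(\mathbf 1)$ is not $1$ when $N=l(\lambda)$. For $\alpha=1$ one has $C_\lambda=\frac{|\lambda|!}{H_\lambda}s_\lambda$, with $H_\lambda$ the product of hook lengths, so $C_{(2,1)}(1,1)=4$. Your displayed formula for $C_\lambda(\mathbf 1)$ also lost the factor $\alpha^{|\lambda|}$.

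More seriously, the inequality itself fails for $k_2>1$. For $N=2$, the identity $C_{(2)}+C_{(1,1)}=(x_1+x_2)^2$ together with $C_{(2)}=P_{(2)}=m_{(2)}+\frac{2}{1+\alpha}m_{(1,1)}$ gives $C^{(\alpha)}_{(1,1)}=\frac{2\alpha}{1+\alpha}\,x_1x_2$. Hence $C^{(1/k_2)}_{(1,1)}(\mathbf 1)=\frac{2}{1+k_2}$, which equals $2/3$ in the quaternionic case $k_2=2$. No box-by-box comparison can rescue this.

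Its only use, in Lemma~\ref{Kontrolle_Reihenrest}, is easily repaired. Nonnegative coefficients and homogeneity give $C_\lambda(y)\le\|y\|_\infty^{|\lambda|}\,C_\lambda(\mathbf 1)$ for $y\ge0$. Therefore $C_\lambda(x^2/2)\,C_\lambda(y^2/2)/C_\lambda(\mathbf 1)\le C_\lambda(x^2/2)\,\|y^2/2\|_\infty^{|\lambda|}$. Summing over $|\lambda|=n$ then yields the same Taylor-remainder bound, with no lower bound on $C_\lambda(\mathbf 1)$ needed.
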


\begin{proof}
For \eqref{Jack Koeffizienten nichtnegativ} see Theorem 1.1 in \cite{KS}.
Moreover, \eqref{Muirhead} is a special case of Muirhead's theorem 2.18 in \cite{HLP}, and
 \eqref{Ungleichungskette C-Normierung} follows from \eqref{norming-p-c}, \eqref{sum-identity-c}, and \eqref{Jack Koeffizienten nichtnegativ}, where the second inequality of \eqref{Ungleichungskette C-Normierung} can also be found in the proof of Lemma 2.1 in \cite{R4}.
 \eqref{P(1)} is given in Table 5 in [DES].
Finally, \eqref{C(1) mind. 1} is shown for instance in the proof of Proposition 1 in \cite{Kan}.
\end{proof}

In the remainder of this section, we prove the following limit result:

\begin{theorem}\label{Limes_1_Jackpolynom_Lemma}
Let $k_2>0$ and $N\in\N$.
Let $(\lambda(k))_{k>0}\subset\Lambda_N$ be a sequence of partitions such that $(\lambda(k)/k)_{k>0}$ is  bounded. 
Then locally uniformly for $x \in C_N^B$,
\begin{equation}\label{Limes_1_Jackpolynom}
(P_{\lambda(k)}^{(1/k_2)}(x))^{1/k} - \prod_{i=1}^{N} x_i^{\lambda(k)_i/k} \to 0 \quad\text{for}\quad k \to \infty.
\end{equation}
\end{theorem}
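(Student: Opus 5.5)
Write $\lambda=\lambda(k)$ and $n:=|\lambda|$. The plan is to squeeze $P_\lambda^{(1/k_2)}(x)$ between $x^\lambda:=\prod_i x_i^{\lambda_i}$ from below and $x^\lambda$ times a factor that grows at most polynomially in $k$ from above. After taking $k$-th roots, both bounds collapse to $x^{\lambda/k}:=\prod_i x_i^{\lambda_i/k}$.

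\textbf{Lower bound.} By \eqref{def-p-m-lambda} and \eqref{Jack Koeffizienten nichtnegativ}, every term of $P_\lambda^{(1/k_2)}$ is a nonnegative combination of monomial symmetric functions, and $c_{\lambda,\lambda}=1$. For $x\in C_N^B$ all monomials are nonnegative, so
$$P_\lambda^{(1/k_2)}(x)\ge m_\lambda(x)\ge x^\lambda.$$

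\textbf{Upper bound.} Let $\mu\le\lambda$ in dominance order and $x\in C_N^B$. Since $x_1\ge\dots\ge x_N\ge0$, we have $x^\sigma\le x^\lambda$ for every permutation $\sigma$ of $\mu$. This can be shown by Abel summation with partial sums, as in the proof of Lemma \ref{maxima-cone1}, applied to $\ln x_i$ (first for $x_N>0$, then by continuity). Hence $m_\mu(x)\le N!\,x^\lambda$, which we also get from \eqref{Muirhead}. This gives
$$P_\lambda^{(1/k_2)}(x)\le N!\,x^\lambda\sum_{\mu\le\lambda}c_{\lambda,\mu}(k_2)\le N!\,x^\lambda\,P_\lambda^{(1/k_2)}(\mathbf 1),$$
using $m_\mu(\mathbf 1)\ge1$ and nonnegativity of the coefficients. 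It remains to show $P_\lambda^{(1/k_2)}(\mathbf 1)^{1/k}\to1$, i.e. subexponential growth. By \eqref{P(1)},
$$P_\lambda(\mathbf 1)=\frac{\prod_{(i,j)\in\lambda}\bigl(k_2N-k_2(i-1)+j-1\bigr)}{k_2^{n}\prod_{(i,j)\in\lambda}\bigl((\lambda_i-j)/k_2+\lambda'_j-i+1\bigr)}.$$
Pair the factors row by row. The numerator factor in row $i$, column $j$ is at most $k_2N+j-1$. The denominator factor is at least $(\lambda_i-j+1)$, up to constants depending only on $k_2$ (when $k_2\le1$ use $\lambda'_j-i+1\ge1$; otherwise factor out). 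Both products over $j\le\lambda_i$ are then of the order $(\lambda_i)!$ times $C^{\lambda_i}$, with polynomial corrections in $\lambda_i$, and the factorials cancel. The real need is that the ratio is at most $e^{o(k)}$. I would therefore do the estimate more carefully via Stirling: the $\ln$ of each row product is $\lambda_i\ln\lambda_i-\lambda_i+O(\ln\lambda_i)$ for both. The $k_2$-powers must also match exactly, so the constants $C^{\lambda_i}$ have to cancel. This is the delicate point: I would write the denominator factor as $k_2^{-1}\bigl(\lambda_i-j+k_2(\lambda'_j-i+1)\bigr)$, which is at least $k_2^{-1}(\lambda_i-j+\min(k_2,1))$. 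The product over $j$ is then $k_2^{-\lambda_i}\,\Gamma(\lambda_i+c)/\Gamma(c)$, and the $k_2^{-\lambda_i}$ cancels the $k_2^{n}$ in front. The numerator row product is $\Gamma(\lambda_i+a_i)/\Gamma(a_i)$ with $a_i=k_2(N-i+1)>0$. Each row ratio is thus $\Gamma(\lambda_i+a_i)\Gamma(c)/(\Gamma(a_i)\Gamma(\lambda_i+c))=O(\lambda_i^{a_i-c})$, which is polynomial in $k$ because $\lambda/k$ is bounded. So $1\le P_\lambda(\mathbf 1)\le Ck^{D}$ and $P_\lambda(\mathbf 1)^{1/k}\to1$.

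\textbf{Conclusion.} We now have $x^{\lambda/k}\le P_\lambda(x)^{1/k}\le (N!\,Ck^D)^{1/k}x^{\lambda/k}$. On a compact set of $x$, the quantity $x^{\lambda/k}$ is uniformly bounded since $\lambda/k$ is bounded. Therefore the difference tends to $0$ uniformly, which proves \eqref{Limes_1_Jackpolynom}. The main obstacle is the polynomial bound on $P_\lambda(\mathbf 1)$; it relies on lower-bounding the hook-type denominator $c_\lambda(1/k_2)$ by using only the arm lengths.
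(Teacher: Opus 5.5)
Your proof is correct, and its skeleton is the paper's. You use the same sandwich $x^\lambda\le m_\lambda(x)\le P_\lambda^{(1/k_2)}(x)\le N!\,x^\lambda P_\lambda^{(1/k_2)}(\mathbf 1)$, obtained from nonnegativity of the coefficients and Muirhead, followed by $P_\lambda^{(1/k_2)}(\mathbf 1)^{1/k}\to1$ from \eqref{P(1)}. Your step $\sum_\mu c_{\lambda,\mu}\le P_\lambda^{(1/k_2)}(\mathbf 1)$ via $m_\mu(\mathbf 1)\ge1$ is more careful than the paper's displayed identity $\sum_\mu c_{\mu,\lambda}=P_\lambda^{(1/k_2)}(\mathbf 1)/N!$. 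That identity is not exact in general, since $m_\mu(\mathbf 1)$ counts the distinct rearrangements of $\mu$ rather than being $N!$; only $k$-independent constants are affected, so this is harmless.

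The genuine difference is the proof that $P_\lambda^{(1/k_2)}(\mathbf 1)^{1/k}\to1$.
\begin{itemize}
\item The paper splits the cells of $\lambda$ into blocks $\lambda_{m+1}<j\le\lambda_m$, on which $\lambda'_j=m$ is constant. It approximates the logarithms of numerator and denominator blockwise by integrals via the Riemann-sum/Stirling estimate of Lemma~\ref{Vorbereitung_Approximation}, and then checks that the resulting $\lambda_i\ln\lambda_i$-type terms telescope to $0$.
\item You discard the leg part of each factor of $k_2^{|\lambda|}c_\lambda(1/k_2)$ using $\lambda'_j-i+1\ge1$. Each row then becomes the Gamma ratio $\Gamma(\lambda_i+a_i)\Gamma(c)/(\Gamma(a_i)\Gamma(\lambda_i+c))$ with $a_i\ge c>0$, which gives an explicit polynomial bound $P_\lambda^{(1/k_2)}(\mathbf 1)\le Ck^D$.
\end{itemize}

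Your route is shorter, self-contained and quantitative. It suffices here because only a one-sided, subexponential upper bound on $P_\lambda^{(1/k_2)}(\mathbf 1)$ is needed. The paper's Riemann-sum lemma instead gives two-sided asymptotics on the exponential scale. The paper reuses that machinery in Lemma~\ref{Limes_1_ohne_Jackpolynomanteil}, where the exact exponential rate $S_{k_1}$ of the series coefficients must be identified, not merely bounded.
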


The proof is based on the following  lemma which is also needed in Section 4 in this form.

\begin{lemma}\label{Vorbereitung_Approximation}
Let $(\lambda(k))_{k>0}\subset\Lambda_2$ and $(b(k))_{k>0} \subset[0,\infty[$
such that $(\lambda(k)/k)_{k>0}$ and $(b(k)/k)_{k>0}$ are bounded.
Furthermore, let $a \in \R\setminus\{0\}$, $d \in \R$, $k_0 > 0$, and $u>0$ with
\begin{equation}\label{Bedingung_1_Vorbereitung_Approximation}
a \cdot \lambda(k)_i+b(k) \geq 0 \quad\text{and}\quad
a \cdot \lambda(k)_i+b(k)+d \geq u \quad\text{for}\quad i=1,2, \; k \geq k_0 .
\end{equation}
If
$M_k:=\frac{1}{k}\ln\bigg(\prod\limits_{j=\lambda(k)_2+1}^{\lambda(k)_1}(a \cdot j + b(k) +d)\bigg)$ and
$$S_k:=\frac{1}{ak}  ((a \lambda(k)_1 + b(k)) \cdot (\ln(a \lambda(k)_1 + b(k)) - 1) - (a \lambda(k)_2 + b(k)) \cdot (\ln(a \lambda(k)_2+b(k))-1)),$$
then  with the  convention $0 \cdot \ln(0)=0$, $M_k-S_k\to0$ for $k\to\infty$.
\end{lemma}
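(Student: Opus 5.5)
This is a Riemann-sum comparison: $M_k$ is a normalized sum of logarithms, and $S_k$ is exactly the corresponding integral. Put $F(s):=s(\ln s-1)$, so $F'(s)=\ln s$ and $F(0)=0$. Write $\alpha_k:=a\lambda(k)_2+b(k)$ and $\beta_k:=a\lambda(k)_1+b(k)$. Then
$$S_k=\frac{1}{ak}\bigl(F(\beta_k)-F(\alpha_k)\bigr)=\frac1k\int_{\lambda(k)_2}^{\lambda(k)_1}\ln(aj+b(k))\,dj ,$$
since substituting $s=aj+b(k)$ gives $ds=a\,dj$. This holds for either sign of $a$, because $aj+b(k)$ lies between $\alpha_k$ and $\beta_k$, and both are $\ge0$ by \eqref{Bedingung_1_Vorbereitung_Approximation}.

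Likewise
$$M_k=\frac1k\sum_{j=\lambda(k)_2+1}^{\lambda(k)_1}\ln(aj+b(k)+d).$$
All summands are well defined: $j\mapsto aj+b(k)+d$ is affine, so on the interval it is bounded below by its endpoint values, which are $\ge u$. The number of summands is $n_k:=\lambda(k)_1-\lambda(k)_2=O(k)$, and all arguments are $O(k)$. It therefore suffices to show that
$$\sum_{j=\lambda(k)_2+1}^{\lambda(k)_1}\ln(aj+b(k)+d)-\int_{\lambda(k)_2}^{\lambda(k)_1}\ln(aj+b(k))\,dj=o(k).$$

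I would split the difference as $D_1+D_2$. For $D_1$, replace $\ln(aj+b+d)$ by $\ln(aj+b)$ in the sum. For $D_2$, compare that sum with the integral using monotonicity of $\ln$.

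For $D_2$, compare each summand $\ln(aj+b)$ with $\int_{j-1}^{j}\ln(as+b)\,ds$. By monotonicity the difference is at most $|\ln(aj+b)-\ln(a(j-1)+b)|$, and these terms telescope. The total is bounded by $|\ln\beta_k-\ln\alpha_k|$, apart from a possible endpoint where the argument is $0$. There the integral of $\ln$ near $0$ is integrable, and one handles it separately by dropping one term and one unit interval, each of size $O(\ln k)$ or $O(1)$. So $D_2=O(\ln k)$ as long as the arguments stay $\ge$ some constant or vanish only at an endpoint.

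For $D_1$, the summand difference is $\ln\bigl(1+d/(aj+b)\bigr)$. This is $O(1/(aj+b))$ as long as $aj+b$ is bounded away from $0$. The hypothesis $aj+b+d\ge u>0$ together with $aj+b\ge0$ keeps every logarithm finite. The values $aj+b$ run along an arithmetic progression with step $|a|$, so the number of $j$ with $aj+b\le R$ is $O(R/|a|+1)$. For the finitely many terms with $aj+b\le R:=2|d|+1$, each difference is bounded by $\ln\bigl(\max(R+|d|,u)/u\bigr)+|\ln(\text{smallest } aj+b)|$. The only danger is $aj+b$ extremely close to $0$ but nonzero. Since both $aj+b\ge0$ and $aj+b+d\ge u$ hold, I would instead bound the worst single term crudely: the $\ln(aj+b+d)$ side is $\ge\ln u$, and on the integral side such a term is absorbed into $D_2$'s endpoint treatment. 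The remaining terms contribute $\sum O(1/(aj+b))=O(\ln k)$ by the harmonic-sum bound along the progression.

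Altogether the difference is $O(\ln k)=o(k)$, so $M_k-S_k\to0$. The main obstacle I expect is this careful bookkeeping near the lower end where $aj+b$ can approach $0$ (when $\alpha_k$ or $\beta_k$ is small). There I would isolate the $O(1)$ many indices with $aj+b<R$ and bound their contributions and the corresponding integral pieces by $O(1+\ln k)$ directly, using $\int_0^{R}|\ln s|\,ds<\infty$.
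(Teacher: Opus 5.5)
Your argument is correct in its final form, but it does the two approximations in the opposite order from the paper.

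\textbf{The paper's order.} The paper first compares $\sum_j \ln(aj+b(k)+d)$ with $\int_{\lambda(k)_2}^{\lambda(k)_1}\ln(ax+b(k)+d)\,dx$, i.e.\ with the integral of the \emph{shifted} function. By the hypothesis $a\lambda(k)_i+b(k)+d\ge u$, this integrand lies in $[\ln u,\,O(\ln k)]$ on the whole interval. The monotone Riemann-sum comparison therefore costs only $O(\ln k)$, and no singularity ever appears. The shift by $d$ is then removed in closed form at the two endpoints only. With $F(s)=s(\ln s-1)$ one has $F(r+d)-F(r)=r(\ln(r+d)-\ln r)+d(\ln(r+d)-1)$. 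The mean value theorem and the boundedness of $r/(r+d)$ make this $O(\ln k)$ uniformly, including as $r\to0$, where $F$ is continuous.

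\textbf{Your order.} You remove $d$ term by term inside the sum and then compare the unshifted sum with the unshifted integral. This brings the logarithmic singularity at $0$ into the Riemann-sum step. That is why you need the harmonic-sum estimate for $\ln(1+d/(aj+b))$ and a separate treatment of the indices with $aj+b<R$.

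\textbf{Where your intermediate bounds fail.} That separate treatment works only as formulated in your last paragraph. Since $b(k)$ is an arbitrary real sequence, the smallest value of $aj+b(k)$ can be nonzero yet tend to $0$ arbitrarily fast, e.g.\ like $e^{-k^2}$. Then your bound $|\ln\beta_k-\ln\alpha_k|$ for $D_2$ is not $o(k)$ in general. The same holds for your per-term bound involving $|\ln(\text{smallest } aj+b)|$ in $D_1$.

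\textbf{What is needed instead.} For the isolated indices you must keep the original summands $\ln(aj+b+d)\in[\ln u,\ln(R+|d|)]$ and compare them directly with their integral pieces. Those pieces are bounded in total by $|a|^{-1}\int_0^{R+|a|}|\ln s|\,ds$. Choose $R$ large, e.g.\ $R\ge 2|d|+1+|a|$, so that $as+b$ stays bounded away from $0$ on the unit intervals attached to the remaining indices. Those indices form a contiguous block because $j\mapsto aj+b$ is monotone.

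\textbf{Comparison.} The paper's ordering gives a shorter proof with no case distinction. Yours reaches the same $O(\ln k)$ error but needs this extra bookkeeping near the singularity.
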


\begin{proof}
We write $x_k \approx y_k$ for  real sequences with  $x_k - y_k \ra 0$ as $k \to \infty$.
We now check the two $\approx $ in the following consideration:
\begin{align}
M_k
&=
\frac{1}{k}  \sum\limits_{j=\lambda(k)_2+1}^{\lambda(k)_1} \ln(a \cdot j +b(k)+d)
\>\>{\approx}\>\>
\frac{1}{k} \int\limits_{\lambda(k)_2}^{\lambda(k)_1} \ln(a \cdot x +b(k)+d)dx \notag\\
&= S_k(d):=
\frac{1}{a k} \cdot ((a \cdot \lambda(k)_1 + b(k) + d) \cdot (\ln(a \cdot \lambda(k)_1 + b(k) + d) - 1)
\notag\\ &\quad\quad\quad\quad\quad\quad  - (a \cdot \lambda(k)_2 + b(k) + d) \cdot (\ln(a \cdot \lambda(k)_2+b(k)+d)-1)) \quad
\approx S_k.\notag
\end{align} 
In fact, the first $\approx $ follows  from the monotonicity of $ \ln(a \cdot x +b(k)+d)$ and from the fact that we may 
 omit and/or add a summand at the beginning and/or end
in the sum in
$$ \frac{1}{k}\sum\limits_{j=\lambda(k)_2+1}^{\lambda(k)_1} \ln(a \cdot j +b(k)+d),$$
such that $\approx$ is preserved by our assumptions.
For the second $\approx$ we put
$r_{k,i}:=a  \lambda(k)_i+b(k)$ for  $i=1,2$).
Then, by the $\Delta$-inequality and the mean value theorem,
\begin{align}
  |a|\cdot k\cdot|S_k(d)-S_k| &\le \sum_{i=1}^2 \Biggl( |r_{k,i}|\cdot |\ln(r_{k,i}+d)-\ln r_{k,i}| + |d|\cdot |\ln(r_{k,i}+d)-1|\Bigr) \notag\\
  &\le \sum_{i=1}^2\max(1, r_{k,i}/(r_{k,i}+d)) +O(\ln k),\notag
\end{align}
which immediately gives the second  $\approx$.
\end{proof}

\begin{proof}[Proof of Theorem \ref{Limes_1_Jackpolynom_Lemma}]
  Let $x \in C_N^B$ and let $c_{\lambda(k),\lambda(k)}(k_2) \coloneqq 1$. Then,  by \eqref{Jack Koeffizienten nichtnegativ}, \eqref{Muirhead}, and
  \eqref{P(1)},
\begin{align}\label{Abschaetzung Jack}
  \prod\limits_{i=1}^N x_i^{\lambda(k)_i}
  &\leq  m_{\lambda(k)}(x) \leq P_{\lambda(k)}^{(1/k_2)}(x) = \sum\limits_{\mu \leq \lambda(k)} c_{\mu,\lambda(k)}(k_2) \cdot m_{\mu}(x) \notag\\
  &\leq m_{\lambda(k)}(x) \cdot \sum\limits_{\mu \leq \lambda(k)} c_{\mu,\lambda(k)}(k_2) 
  = \frac{m_{\lambda(k)}(x) \cdot P^{(1/k_2)}_{\lambda(k)}({\bf 1})}{N!}\notag\\ &= \frac{(k_2 \cdot N)_{\lambda(k)}^{(1/k_2)} \cdot m_{\lambda(k)}(x)}{N! \cdot k_2^{|\lambda(k)|} \cdot c_{\lambda(k)}(1/k_2)} 
  \leq \frac{(k_2 \cdot N)_{\lambda(k)}^{(1/k_2)}}{k_2^{|\lambda(k)|} \cdot c_{\lambda(k)}(1/k_2)} \cdot \prod\limits_{i=1}^N x_i^{\lambda(k)_i}.
\end{align}
We next use $\lambda(k)_{N+1}=0$, the Pochhammer symbol, and \eqref{def-c-lambda} and observe that
\begin{align}
H_k:&=\frac{1}{k} \ln \Bigg(  \frac{(k_2 \cdot N)_{\lambda(k)}^{(1/k_2)}}{k_2^{|\lambda(k)|} \cdot c_{\lambda(k)}(1/k_2)}\Bigg)
= \frac{1}{k}\ln \Bigg( \prod\limits_{(i,j) \in \lambda(k)} \frac{k_2 \cdot (N-i+1) + j - 1}{k_2 \cdot (\lambda(k)_j'-i+1) +\lambda(k)_i-j}  \Bigg) \notag\\
&=  \frac{1}{k} \sum\limits_{i=1}^N \sum\limits_{m=i}^N \Bigg( \ln \Bigg(  \prod\limits_{j=\lambda(k)_{m+1}+1}^{\lambda(k)_m} (k_2 \cdot (N-i+1) + j - 1)  \Bigg)  \notag\\
&\quad\quad\quad\quad\quad\quad - \ln \Bigg( \prod\limits_{j=\lambda(k)_{m+1}+1}^{\lambda(k)_m} (k_2 \cdot (m-i+1) +\lambda(k)_i-j) \Bigg) \Bigg).
 \notag\end{align}
If we apply Lemma \ref{Vorbereitung_Approximation}
with $\lambda(k) = (\lambda(k)_{m+1},\lambda(k)_m)$, $a = \pm 1$,  $b(k)= 0$ and $b(k)= \lambda(k)_i$,
as well as $d = k_2 \cdot (N-i+1)-1$ and $d = k_2 \cdot (m-i+1)$, respectively and  $\lambda(k)_{N+1}=0$, it follows that
\begin{align}
H_k
&\approx \frac{1}{k} \sum\limits_{i=1}^N  \sum\limits_{m=i}^N \Bigg(\lambda(k)_m (\ln(\lambda(k)_m)-1)-\lambda(k)_{m+1} (\ln(\lambda(k)_{m+1})-1) \notag\\
&\>\>\>\>\>\> + (\lambda(k)_i - \lambda(k)_m) (\ln(\lambda(k)_i-\lambda(k)_m)-1)-(\lambda(k)_i-\lambda(k)_{m+1}) (\ln(\lambda(k)_i-\lambda(k)_{m+1})-1)
\Bigg) \notag\\
&= \frac{1}{k}\sum\limits_{i=1}^N \big(\lambda(k)_i \cdot (\ln(\lambda(k)_i)-1) - \lambda(k)_i \cdot (\ln(\lambda(k)_i)-1)\big)
= 0
\end{align}
for $k\to0$. Hence,
\begin{align}\label{Limes P(1)}
\Bigg( \frac{(k_2 \cdot N)_{\lambda(k)}^{(1/k_2)}}{k_2^{|\lambda(k)|} \cdot c_{\lambda(k)}(1/k_2)} \Bigg)^{1/k} \xra[k \to \infty]{} 1.
\end{align}
Finally, this, \eqref{Abschaetzung Jack},  and the local boundedness of $\prod_{i=1}^{N} x_i^{\lambda(k)_i/k}$
immediately lead to the assertion where  the convergence is locally uniform in $x$.
\end{proof}

\section{Proof of Eq.~\eqref{limit-BN-functions-intro1}  via the series representation}

In this section, we prove the limit result \eqref{limit-BN-functions-intro1} in Theorem \ref{limit-BN-functions-intro} for arbitrary
$k_2>0$ by using the hypergeometric series representation 
of $J^B_{(k_1,k_2)}$ in terms of the Jack polynomials.
In fact, according to Subsection 2.2 and Proposition 4.5 in \cite{R4},  $J^{B}_{(k_1,k_2)}$
can be written as the hypergeometric functions
\begin{align}\label{series-rep-bessel}
J^{B}_{(k_1,k_2)}(x,y) 
&= {}_0F_1^{1/k_2}\Big(k_1+(N-1)k_2+\frac{1}{2};\frac{x^2}{2},\frac{y^2}{2}\Big) \\
&= \sum\limits_{\lambda \in \Lambda_N}
\frac
{1}{(k_1+(N-1)k_2+\frac{1}{2})_{\lambda}^{1/k_2} \cdot |\lambda|!} 
\cdot 
\frac{C_{\lambda}^{(1/k_2)}(\frac{x^2}{2}) \cdot C_{\lambda}^{(1/k_2)}(\frac{y^2}{2})}
{C_{\lambda}^{(1/k_2)}({\bf 1})}
\notag\end{align}
for $k_1 \geq 0$, $k_2>0$, and $x,y\in\R^N$, where we use the notation $x^2 \coloneqq (x_1^2,\ldots,x_N^2)$.

Using \eqref{norming-p-c}, \eqref{P(1)}, and the homogeneity of Jack polynomials, we can rewrite this identity as
\begin{align}\label{P-Darstellung Besselfunktion}
J_k^{B}(x,k_1y)
=
\sum\limits_{\lambda \in \Lambda_N}
\frac
{k_1^{2|\lambda|} \cdot c_{\lambda}(1/k_2) \cdot P_{\lambda}^{(1/k_2)}\big(\frac{x^2}{2}\big) \cdot P_{\lambda}^{(1/k_2)}\big(\frac{y^2}{2}\big)}
{(k_2 \cdot N)_{\lambda}^{(1/k_2)} \cdot c'_{\lambda}(1/k_2) \cdot (k_1 + (N-1)k_2 + \frac{1}{2})_{\lambda}^{(1/k_2)}}.
\end{align}
We next derive some lemmas which are needed for the proof of Equation \eqref{limit-BN-functions-intro1}.

\begin{lemma}\label{Kontrolle_Reihenrest}
Let $N\in\N$ and $k_2>0$.
For $c,k_1 \geq 0$ and $x,y \in \R^N$, define 
\begin{align}
R_{k_1,c}(x,y) \coloneqq \sum\limits_{n=\lceil c \cdot k_1 \rceil}^{\infty}\>  \sum\limits_{\lambda \in \Lambda_N :\> |\lambda|=n}
\frac
{k_1^{2n}}{(k_1+(N-1)k_2+\frac{1}{2})_{\lambda}^{1/k_2} \cdot n!} 
\cdot 
\frac{C_{\lambda}^{(1/k_2)}(\frac{x^2}{2}) \cdot C_{\lambda}^{(1/k_2)}(\frac{y^2}{2})}
{C_{\lambda}^{(1/k_2)}({\bf 1})}. \nonumber
\end{align}
Then, locally uniformly in $x,y\in\R^N$,
\begin{align}
\lim\limits_{c \to \infty} \limsup\limits_{k_1 \to \infty} (R_{k_1,c}(x,y))^{1/k_1}=0. \label{Limes_Rest_Reihe}
\end{align}
\end{lemma}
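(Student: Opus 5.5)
We bound each summand of $R_{k_1,c}$ by crude but explicit estimates and then sum a geometric-type series. Fix compact sets for $x,y$ and let $A\ge 1$ be an upper bound there for $\sum_i x_i^2/2$ and $\sum_i y_i^2/2$.

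\emph{Step 1: the Jack polynomial factors.} By \eqref{Ungleichungskette C-Normierung}, $|C_\lambda^{(1/k_2)}(x^2/2)|\le A^{n}$ and $|C_\lambda^{(1/k_2)}(y^2/2)|\le A^{n}$ for $|\lambda|=n$. By \eqref{C(1) mind. 1}, the denominator $C_\lambda^{(1/k_2)}({\bf 1})$ is at least $1$.

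\emph{Step 2: the Pochhammer factor.} Every factor in $(k_1+(N-1)k_2+\frac12)^{(1/k_2)}_\lambda=\prod_{(i,j)\in\lambda}(k_1+(N-1)k_2+\frac12-(i-1)k_2+j-1)$ is at least $k_1+\frac12+j-1\ge k_1$, since $i\le N$. Hence this symbol is at least $k_1^{n}$.

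\emph{Step 3: counting and summing.} The number of $\lambda\in\Lambda_N$ with $|\lambda|=n$ is at most $(n+1)^N$. Combining Steps 1–3, each summand with $|\lambda|=n$ is at most $k_1^{2n}A^{2n}/(k_1^n n!)$, so
\[
R_{k_1,c}(x,y)\le \sum_{n\ge \lceil ck_1\rceil}(n+1)^N\frac{(k_1A^2)^n}{n!}.
\]
Use $n!\ge (n/e)^n$. For $n\ge ck_1$ we have $k_1A^2e/n\le A^2e/c$. Once $c\ge 2eA^2$, the terms are bounded by $(n+1)^N (A^2e/c)^n$, and the ratio $A^2e/c$ is at most $1/2$. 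The tail sum is then at most $C_N (ck_1+1)^N (A^2e/c)^{ck_1}$ up to a factor $2$ (polynomial factors are absorbed since $(n+1)^N$ grows subexponentially). Taking the $1/k_1$-th power and letting $k_1\to\infty$ gives
\[
\limsup_{k_1\to\infty}R_{k_1,c}^{1/k_1}\le (eA^2/c)^{c}.
\]
This bound tends to $0$ as $c\to\infty$. All bounds depend only on $A$, so the convergence is uniform on compacta.

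The main obstacle is Step 2, the lower bound for the generalized Pochhammer symbol. The shift $-(i-1)k_2$ could make factors small, but the offset $(N-1)k_2$ compensates it exactly, so each factor is at least $k_1$ and no delicate estimate is needed. Everything else is a routine tail estimate of an exponential series.
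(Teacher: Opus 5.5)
Your proof is correct and follows essentially the paper's route. Both use the lower bound $k_1^{|\lambda|}$ for the Pochhammer symbol, $C_\lambda^{(1/k_2)}({\bf 1})\ge 1$, and the reduction to the tail of the exponential series in $k_1\|x^2/2\|_1\|y^2/2\|_1$, finished by a Stirling-type estimate. The differences are harmless: the paper avoids your partition-counting factor $(n+1)^N$ by summing $C_\lambda^{(1/k_2)}(x^2/2)$ over $|\lambda|=n$ via \eqref{sum-identity-c}, and it bounds the tail by the Taylor remainder rather than by your geometric-series comparison.
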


\begin{proof}
As
\begin{align*}
\Big( k_1+(N-1)k_2+\frac{1}{2} \Big)_{\lambda}^{1/k_2}
=
\prod\limits_{i=1}^N \Big( k_1+(N-i)k_2+\frac{1}{2} \Big)_{\lambda_i} \geq k_1^{|\lambda|},
\end{align*}
we obtain from \eqref{sum-identity-c}, \eqref{Ungleichungskette C-Normierung}, \eqref{C(1) mind. 1}, and a classical statement about the Taylor remainder that
\begin{align}
0
\leq
R_{k_1,c}(x,y) 
\leq 
\sum\limits_{n=\lceil c \cdot k_1 \rceil}^{\infty} \frac{\big(k_1 \big| \big| \frac{x^2}{2} \big| \big|_1 \big| \big| \frac{y^2}{2} \big| \big|_1\big)^n}{n!}
\leq
\frac{\exp\big(k_1 \big| \big| \frac{x^2}{2} \big| \big|_1 \big| \big| \frac{y^2}{2} \big| \big|_1\big)}{\lceil c k_1 \rceil !} 
\cdot \Big(k_1 \big| \big| \frac{x^2}{2} \big| \big|_1 \big| \big| \frac{y^2}{2} \big| \big|_1\Big)^{\lceil c k_1 \rceil}. \label{Abschaetzung_Rest_Reihe}
\end{align}
This and the Stirling formula now lead to 
\eqref{Limes_Rest_Reihe}.
\end{proof}

The next lemma is a direct consequence of Lemma \ref{Vorbereitung_Approximation}.

\begin{lemma}\label{Limes_1_ohne_Jackpolynomanteil}
Let $k_2>0$ and $N\in\N$.
Let $(\lambda(k_1))_{k_1 > 0}\subset\Lambda_N$ a family of partitions such that $(c(k_1)\coloneqq \lambda(k_1)/k_1)_{k_1>0}$ is bounded. 
Furthermore, define
\begin{align*}
M_{k_1} &\coloneqq \Bigg( \frac{k_1^{2|\lambda(k_1)|} \cdot c_{\lambda(k_1)}(1/k_2)}{(k_2 \cdot N)_{\lambda(k_1)}^{(1/k_2)} \cdot c'_{\lambda(k_1)}(1/k_2) \cdot (k_1 + (N-1)k_2 + \frac{1}{2})_{\lambda(k_1)}^{(1/k_2)}} \Bigg)^{1/k_1}, \\
S_{k_1} &\coloneqq \prod\limits_{i=1}^N \Bigg( \bigg( \frac{e^2}{(1+c(k_1)_i)c(k_1)_i} \bigg)^{c(k_1)_i} \cdot \frac{1}{1+c(k_1)_i} \Bigg).
\end{align*}
Then $M_{k_1}-S_{k_1} \ra 0$ for $k_1 \to \infty$.

\begin{proof}
The arguments of  the proof of \eqref{Limes P(1)} show that $M_{k_1}/S_{k_1} \ra 1$ for $k_1 \to \infty$. 
Additionally, there is a constant $C>0$ with $1/C \leq S_{k_1} \leq C$ for all $k_1>0$.
This implies the assertion.
\end{proof}
\end{lemma}

We next turn to some approximation which connects functions $f$ on the set  $\Lambda_N$ of partitions with limit functions
$f_0$ on $C_N^B$, where both functions depend on some  parameter in $C_N^B\times C_N^B$, and
where $f$  depends on some scaling parameter $k>0$. In the application of this approximation in the proof of
Eq.~\eqref{limit-BN-functions-intro1}, the function $f_0$ will be almost a continuous function which disappears at infinity, where
``almost'' means that we have some discontinuity on the boundary of $ C_N^B$. In order to incorporate
these boundary cases, we replace the simple $C_0$-condition by the conditions  \eqref{Limeslemma_Reihe_Voraussetzung_Kontrolle_Limes},
\eqref{Limeslemma_Reihe_Voraussetzung_Uebergang_Lambda_durch_k_C}, and \eqref{Limeslemma_Reihe_Voraussetzung_Beschraenktheit_f_0} below.
These  conditions are sufficient for our approximation result and
in the proof of
Eq.~\eqref{limit-BN-functions-intro1}.
As this approximation might be useful also elsewhere, we state it in a  general form:

\begin{lemma}\label{Limeslemma_Reihe}
Let $C \subset \R^N$ and $K \subset \R^M$, and let $\Lambda \coloneqq C \cap \Z^N$.
Consider functions $f:\,]0,\infty[\, \times \Lambda \times K \ra [0,\infty[\,$ and $f_0:C \times K \ra [0,\infty[\,$
            such that for sufficiently large $c_0>0$ the following properties hold:
\begin{align}
 \lim\limits_{k \to \infty} \>\>\sup\limits_{\lambda \in \Lambda: \>||\lambda|| < c_0 \cdot k}\>\> \sup\limits_{x \in K}
|(f(k,\lambda,x))^{1/k}-f_0(\lambda/k,x)|&=0, \label{Limeslemma_Reihe_Voraussetzung_Konvergenz_Summanden} \\
\lim\limits_{c_0 \to \infty} \limsup_{k \to \infty} \sup\limits_{x \in K} \Bigg( \sum\limits_{\substack{\lambda\in\Lambda \\ ||\lambda|| \geq c_0 \cdot k}}
f(k,\lambda,x) \Bigg)^{1/k} &= 0, \label{Limeslemma_Reihe_Voraussetzung_Kontrolle_Restreihe} \\
\lim\limits_{c \in C : \> ||c|| \to \infty} \>\> \sup\limits_{x \in K} f_0(c,x) &=0, \label{Limeslemma_Reihe_Voraussetzung_Kontrolle_Limes} \\
  \lim\limits_{k \to \infty}
\sup\limits_{x \in K} \Bigg|
\sup\limits_{c \in \Lambda/k :\>\> ||c|| < c_0} f_0(c,x)
-
\sup\limits_{c \in C :\>\> ||c|| < c_0} f_0(c,x)
\Bigg|
&=
0, \label{Limeslemma_Reihe_Voraussetzung_Uebergang_Lambda_durch_k_C} \\
\sup\limits_{c \in C} \sup\limits_{x \in K} f_0(c,x) < \infty. \label{Limeslemma_Reihe_Voraussetzung_Beschraenktheit_f_0}
\end{align}
Then,
\begin{align*}
\lim\limits_{k \to \infty} \sup\limits_{x \in K} \Bigg| 
\Bigg( \sum\limits_{\lambda\in\Lambda} f(k,\lambda,x) \Bigg)^{1/k} - \sup\limits_{c \in C} f_0(c,x) \Bigg| = 0.
\end{align*}

\begin{proof}
  Let $\epsilon>0$. Clearly,
we find  $c_0>0$ and $k_0>0$
sufficiently large such that, for all $k \geq k_0$,  the left-hand sides of
\eqref{Limeslemma_Reihe_Voraussetzung_Konvergenz_Summanden},
\eqref{Limeslemma_Reihe_Voraussetzung_Kontrolle_Restreihe},
\eqref{Limeslemma_Reihe_Voraussetzung_Kontrolle_Limes}, and 
\eqref{Limeslemma_Reihe_Voraussetzung_Uebergang_Lambda_durch_k_C}
without the limits $k\to\infty$ and $c_0\to\infty$ are bounded by
 $\frac{\epsilon}{5}$.
We  now claim that  there exists some $k_1>0$ such that for all $ k \geq k_1$ we have
\begin{equation}\label{interim-claim-approx}
\sup\limits_{x \in K} 
\Bigg|
\Bigg( \sum\limits_{\lambda\in\Lambda} f(k,\lambda,x) \Bigg)^{1/k}
-
\sup\limits_{\lambda \in \Lambda ;\> ||\lambda|| < c_0 \cdot k} (f(k,\lambda,x))^{1/k}
\Bigg|
\leq
\frac{2\epsilon}{5}.
\end{equation}
To show this, we conclude from \eqref{Limeslemma_Reihe_Voraussetzung_Kontrolle_Restreihe}, $f \geq 0$, and
$|\{z \in \Z^N \, | \, ||z||_{\infty} < n\}|=(2 \lceil n \rceil-1)^N$ for $n > 0$, that
for $x \in K$, $k \geq k_0$, and a constant $a>0$,
\begin{align}
 \sup\limits_{\lambda \in \Lambda:\> ||\lambda|| < c_0 \cdot k} (f(k,\lambda,x))^{1/k}
&\leq
\Bigg( \sum\limits_{\lambda\in\Lambda} f(k,\lambda,x) \Bigg)^{1/k} \notag \\ \notag
&\leq
(|\{\lambda \in \Lambda \, : \, ||\lambda|| < c_0 \cdot k \} | + 1)^{1/k}
\cdot \\ &\quad\quad\cdot \max\Bigg( 
\sup\limits_{\lambda \in \Lambda : \>\> ||\lambda|| < c_0 \cdot k} (f(k,\lambda,x))^{1/k},
\Bigg( \sum\limits_{\lambda\in\Lambda : \>\> ||\lambda|| \geq c_0 \cdot k} f(k,\lambda,x) \Bigg)^{1/k}
\Bigg) \notag\\
&\leq (a \cdot k^N)^{1/k} \cdot \max\Bigg( \sup\limits_{\lambda \in \Lambda: \>\>||\lambda|| < c_0 \cdot k} (f(k,\lambda,x))^{1/k}, \frac{\epsilon}{5} \Bigg). \label{Vergleich_Supremum_Reihe}
\end{align}
Moreover, we obtain from 
\eqref{Limeslemma_Reihe_Voraussetzung_Konvergenz_Summanden} and
\eqref{Limeslemma_Reihe_Voraussetzung_Beschraenktheit_f_0}, that there exists a $k_2>0$ such that
\begin{align}
\sup\limits_{k \geq k_2} \sup\limits_{\lambda \in \Lambda : \>\> ||\lambda|| < c_0 \cdot k} \sup\limits_{x \in K} (f(k,\lambda,x))^{1/k} < \infty. \label{Beschraenktheit f}
\end{align}
Furthermore as $(a \cdot k^N)^{1/k} \ra 1$ for $k \to \infty$, \eqref{Vergleich_Supremum_Reihe} and \eqref{Beschraenktheit f} imply \eqref{interim-claim-approx}.

We next use \eqref{interim-claim-approx},
\eqref{Limeslemma_Reihe_Voraussetzung_Konvergenz_Summanden},
\eqref{Limeslemma_Reihe_Voraussetzung_Kontrolle_Limes}, 
\eqref{Limeslemma_Reihe_Voraussetzung_Uebergang_Lambda_durch_k_C}, 
$f_0 \geq 0$, 
and the $\Delta$-inequality and observe for $k \geq k_1$ that
\begin{align*}
\limsup\limits_{k \to \infty} \sup\limits_{x \in K} \Bigg| 
\Bigg( \sum\limits_{\lambda\in\Lambda} f(k,\lambda,x) \Bigg)^{1/k} - \sup\limits_{c \in C} f_0(c,x) \Bigg| \leq \epsilon.
\end{align*}
As $\epsilon>0$ was arbitrary, the lemma is proved.
\end{proof}
\end{lemma}

We next consider the following elementary lemma on some function  which may be discontinuous.

\begin{lemma}\label{Maximum g_z}
For $z \geq 0$, consider the function $g_z:[0,\infty[\, \ra \R$ with $g_z(0):=1$ and, for $c>0$,
\begin{align*}
g_z(c) := \bigg( \frac{e^2 \cdot z}{(1+c)c} \bigg)^{c} \cdot \frac{1}{1+c}.
\end{align*}
Then, $g_z$ has a unique global maximum at $c^{max}_z = (\sqrt{1+4z}-1)/2$ with
\begin{align}\label{g_max}
g_z(c^{max}_z)=2 \cdot e^{\sqrt{1+4z}-1} \cdot \frac{1}{\sqrt{1+4z}+1}=e^{\sqrt{1+4z}-1} \cdot \frac{\sqrt{1+4z}-1}{2z}. 
\end{align}
\end{lemma}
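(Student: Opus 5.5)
The plan is to pass to logarithms and do one-variable calculus. First, the case $z=0$: here $g_0(0)=1$ and $g_0(c)=0$ for $c>0$ (with $0^c=0$), so the maximum is $1$, attained only at $c=0$. This agrees with \eqref{g_max}, since $c^{max}_0=0$ and $2e^0/(1+1)=1$. We may therefore assume $z>0$. For $c>0$ put
$$\phi(c):=\ln g_z(c)=c\bigl(2+\ln z-\ln c-\ln(1+c)\bigr)-\ln(1+c).$$
Since $c\ln c\to0$ as $c\downarrow 0$, we get $\phi(c)\to0=\ln g_z(0)$, so $g_z$ is continuous at $0$ when $z>0$. Moreover $\phi(c)\to-\infty$ as $c\to\infty$, because the term $-2c\ln c$ dominates. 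Hence a global maximum exists, and it lies either at $c=0$ or at an interior critical point.

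Next I differentiate:
$$\phi'(c)=2+\ln z-\ln c-\ln(1+c)-1-\frac{c}{1+c}-\frac1{1+c}=\ln\frac{z}{c(1+c)}.$$
The key simplification is that the last three constant-type terms combine to $-2$. So $\phi'(c)=0$ exactly when $c^2+c=z$, that is, at $c=c^{max}_z=(\sqrt{1+4z}-1)/2>0$. In addition, $\phi'$ is strictly decreasing, since $c(1+c)$ is increasing. Therefore $\phi'>0$ on $]0,c^{max}_z[$ and $\phi'<0$ afterwards. It follows that $\phi$ increases strictly and then decreases strictly, so $c^{max}_z$ is the unique global maximizer. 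Because $\phi(0^+)=0<\phi(c^{max}_z)$, the point $c=0$ is not a maximizer.

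For the value, I use $c(1+c)=z$ at the maximizer. Then the base becomes $e^2z/(c(1+c))=e^2$, so $g_z(c^{max}_z)=e^{2c}/(1+c)$. With $s:=\sqrt{1+4z}$ we have $2c=s-1$ and $1+c=(s+1)/2$, which gives
$$g_z(c^{max}_z)=\frac{2e^{s-1}}{s+1}.$$
Finally, $(s-1)(s+1)=4z$ gives $2/(s+1)=(s-1)/(2z)$, and this is the second form in \eqref{g_max}.

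There is no real obstacle here. The only points needing care are the boundary behaviour at $c=0$ (the convention $0^c$ when $z=0$, and the limit $c\ln c\to0$) and making sure the constant terms in $\phi'$ cancel correctly.
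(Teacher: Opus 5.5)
Your proof is correct and follows essentially the same route as the paper: take logarithms, get $\frac{d}{dc}\ln g_z(c)=\ln\frac{z}{c(1+c)}$, locate the unique critical point from $c(1+c)=z$, use that this derivative is strictly decreasing (the paper's $-\frac1c-\frac1{1+c}<0$), and treat $z=0$ separately. The differences are in detail only: your explicit check $\phi(0^+)=0<\phi(c^{max}_z)$ rules out the endpoint $c=0$, and your value computation via $c(1+c)=z$ is spelled out more fully than in the paper.
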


\begin{proof} Assume first that $z > 0$. In this case, $g_z$ is continuous on $[0,\infty[$ with
      $\lim_{c\to\infty}g_z(c)=0$. Moreover, by elementary calculus, we have 
      $\frac{d}{dc} \ln(g_z(c))=0$ precisely for $\frac{z}{(1+c)c}=1$,  i.e., for $c = c^{max}_z$ as in the lemma.
      As $\frac{d^2}{dc^2} \ln(g_z(c)) = -\frac{1}{1+c} - \frac{1}{c} < 0$, we conclude that  we have a global
      maximum at $c = c^{max}_z$, and we also get \eqref{g_max}.
      Furthermore, for $z=0$, $g_z$ is discontinuous in $0$ and maximal at $0=c^{max}_0$, and \eqref{g_max} is also valid.
\end{proof}

We now prove Equation \eqref{limit-BN-functions-intro1} in Theorem \ref{limit-BN-functions-intro} for general multiplicities parameters $k_2>0$:

\begin{proof}[Proof of Eq.~\eqref{limit-BN-functions-intro1} in Theorem \ref{limit-BN-functions-intro} for $k_2>0$]
  In  Lemma \ref{Limeslemma_Reihe}
we  take $C \coloneqq C_N^B$, $\Lambda=\Lambda_N$, $M \coloneqq 2N$,
and  $K \subset C_N^B \times C_N^B$ a compactum.
For  $k>0$, $\lambda\in\Lambda_N$, $c \in C_N^B$, and $(x,y) \in K$ we define
\begin{align*}
f(k,\lambda,x,y) 
&\coloneqq
\frac
{k^{2|\lambda|} \cdot c_{\lambda}(1/k_2) \cdot P_{\lambda}^{(1/k_2)}\big(\frac{x^2}{2}\big) \cdot P_{\lambda}^{(1/k_2)}\big(\frac{y^2}{2}\big)}
{(k_2 \cdot N)_{\lambda}^{(1/k_2)} \cdot c'_{\lambda}(1/k_2) \cdot (k + (N-1)k_2 + \frac{1}{2})_{\lambda}^{(1/k_2)}}, \text{ and} \\
f_0(c,x,y) &\coloneqq 
\prod\limits_{i=1}^N g_{(x_i y_i)^2/4}
\end{align*}
with the functions $g_z$ of the preceding lemma.
We now check the conditions of Lemma \ref{Limeslemma_Reihe}.
This lemma and Lemma \ref{Maximum g_z} then lead to the claim \eqref{limit-BN-functions-intro1}.

We have  $f \geq 0$  by the definition of the Jack polynomials and  \eqref{Jack Koeffizienten nichtnegativ}, while 
 $f_0 \geq 0$ is obvious. Moreover, by Lemma \ref{Maximum g_z},
 $z \mapsto g_z(c^{max}_z)$ is continuous, and  $(c,z) \mapsto g_z(c)$ is bounded on $[0, \infty[\, \times [0,u]$ for  $u>0$. 
This leads to \eqref{Limeslemma_Reihe_Voraussetzung_Beschraenktheit_f_0}.
Furthermore, \eqref{Limeslemma_Reihe_Voraussetzung_Konvergenz_Summanden} is a conclusion from Theorem \ref{Limes_1_Jackpolynom_Lemma},
Lemma \ref{Limes_1_ohne_Jackpolynomanteil}, and the fact that  $(c,x) \mapsto \prod_{i=1,\ldots,N} x_i^{c_i}$ is locally bounded and 
 $c \mapsto f_0(c, {\bf 1}, {\bf 1})$  bounded.
 \eqref{Limeslemma_Reihe_Voraussetzung_Kontrolle_Restreihe} follows from \eqref{norming-p-c}, \eqref{P(1)}, and Lemma \ref{Kontrolle_Reihenrest}. Moreover, as $(c,z) \mapsto g_z(c)$ is bounded on $[0, \infty[\, \times [0,u]$ for  $u>0$, 
$g_z(c)$ is increasing in $z$, and as   for  $z \geq 0$, $g_z(c) \ra 0$ for $c \ra \infty$, we obtain  \eqref{Limeslemma_Reihe_Voraussetzung_Kontrolle_Limes}.

Therefore, we only have to prove the critical part \eqref{Limeslemma_Reihe_Voraussetzung_Uebergang_Lambda_durch_k_C}.
For this  let $\epsilon>0$ and define
$$u \coloneqq \max\{(x_i \cdot y_i)^2/4 \, : \, (x,y) \in K, \, i=1,\ldots,N\}.$$
Choose $\delta>0$ such that for all $z \in [0,\delta[\,$, $|g_z(c^{max}_z) - g_z(0)| \leq \epsilon$.
Additionally,  for $k>0$ define $c^{(k)}_z \coloneqq \max\{c \in \N_0/k \, : \, c \leq c^{max}_z\}$.
As  $(c,z) \mapsto g_z(c)$ is uniformly continuous on $[0,c^{max}_u] \times [\delta,u]$, there is a $k_0 > 1$ such that for all $k > k_0$ and $z \in [\delta,u]$ we have 
$|g_z(c^{max}_z) - g_z(c^{(k)}_z)|<\epsilon$. 
Next, define $z:K \ra \R^N$ with $z_i(x,y)=(x_i \cdot y_i)^2/4$ for $i=1, \ldots, N$.
Furthermore, for $k>k_0$, we define  $h^{(k)} : K \to \Lambda/k$ such that for $i=1,\ldots,N$
we put $h^{(k)}_i(x,y)=0$ for  $z_i(x,y) < \delta$,
and $h^{(k)}_i(x,y)=c^{(k)}_{z_i(x,y)}$ for  $z_i(x,y) \geq \delta$. 
Moreover, define $h : K \ra C$ with $h_i(x,y)=c^{max}_{z_i(x,y)}$ for $i=1,\ldots,N$.
Then, for $k>k_0$, $g_{max} \coloneqq \max_{c \geq 0, z \in [0,u]} g_z(c) < \infty$, and $(x,y) \in K$,
\begin{align*}
&\>\>\>\>\>\>\Bigg| \sup\limits_{c \in \Lambda/k} f_0(c,x,y) - \sup\limits_{c \in C} f_0(c,x,y) \Bigg|
\leq |f_0(h^{(k)}(x,y),x,y)-f_0(h(x,y),x,y)| \\
&= \Bigg| \prod\limits_{i=1}^N g_{z_i(x,y)}\Big(c^{(k)}_{z_i(x,y)}\Big) - \prod\limits_{i=1}^N g_{z_i(x,y)}(c^{max}_{z_i(x,y)}) \Bigg| \\
&=\Bigg| \sum\limits_{j=1}^{N} \Bigg( \prod\limits_{i=1}^{j-1} g_{z_i(x,y)}\Big(c^{(k)}_{z_i(x,y)}\Big) \Bigg)
\Bigg( \prod\limits_{i=j+1}^N g_{z_i(x,y)}\Big(c^{max}_{z_i(x,y)}\Big) \Bigg)
\Big( g_{z_j(x,y)}\Big(c^{(k)}_{z_j(x,y)}\Big) - g_{z_j(x,y)}(c^{max}_{z_j(x,y)}) \Big) \Bigg|   \\
&\leq N \cdot g_{max}^{N-1} \cdot \epsilon
\end{align*}
where the second equality follows by a telescoping sum argument.
As $\epsilon>0$ is  arbitrarily small and $h^{(k)} \leq h$ componentwise, we obtain
\eqref{Limeslemma_Reihe_Voraussetzung_Uebergang_Lambda_durch_k_C} 
for $c_0 \geq \sup_{(x,y) \in K}||h(x,y)||_1$ as claimed.
\end{proof}

\section{Application to Bessel processes}

In this section, we  use the limit result \eqref{limit-BN-functions-intro1}
 on the Bessel functions $J_{(k_1,k_2)}^B$
in order to derive the weak limit Theorem \ref{limit-processes-intro}
for Bessel processes.
For this, we first recapitulate from the introduction  and from examples in \cite{V2} that a Bessel process
$(X^{B,w}_{t,k,x})_{t \geq 0}$ of type $B_N$ with multiplicity $k=(k_1,k_2)$, drift $w \in C^N_B$, and starting point $x\in C^N_B$ 
has the transition probability
\begin{equation}\label{transition-b-case}
K_{t,k}^{B,w}(x,A)=\frac{2^N N! \cdot c_k^B e^{-||w||_2^2 t/2}}{t^{\gamma_k^B+N/2}}
\cdot \int\limits_A e^{-(||x||_2^2+||y||_2^2)/(2t)} \cdot \frac{J^B_k(\frac{x}{\sqrt t},\frac{y}{\sqrt{t}}) \cdot J_k^B(y,w)}{J_k^B(x,w)} \cdot w_k^B(y)
       dy
\end{equation}
for $t>0$, a Borel set $A\subset C_N^B$, and $x\in C_N^B$ with
\begin{align*}
w_k^B(y) &= \prod\limits_{i,j=1,\ldots,N, i<j} (y_i^2-y_j^2)^{2 k_2} \cdot \prod\limits_{i=1}^N y_i^{2 k_1}, \quad
\gamma_k^B=k_2 N(N-1)+k_1N, \\ 
c_k^B&=\frac{1}{2^{N(k_1+(N-1)k_2+1/2}} \cdot \prod\limits_{j=1}^N \frac{\Gamma(1+k_2)}{\Gamma(1+jk_2)\Gamma(1/2+k_1+(j-1)k_2)}. \\
\end{align*}
We now consider the processes $(X^{B,k_1w}_{t/k_1,k,x})_{t \geq 0}$.
They are continuous Feller diffusions on $C_N^B$ where by \eqref{transition-b-case}, the transition probabilities
have the form
\begin{equation}\label{trans-tilde-h}
K_{t/k_1,k}^{B,k_1 w}(x,A) 
=\int\limits_A \exp( -k_1 \cdot (  \widetilde{H}_{k,x,w,t}(y) + \widetilde{r}_{k,t} ) ) \cdot V_{k_2}(y) \> dy
\end{equation}
with a suitable constant  $\widetilde{r}_{k,t} \in \R$ (which will be studied below) and 
\begin{align*}
\widetilde{H}_{k,x,w,t}(y) &\coloneqq \sum\limits_{i=1}^N \bigg( \frac{y_i^2}{2t} - \ln(y_i^2)+\frac{x_i^2}{2t}+\frac{w_i^2t}{2}+\ln(2t)-1 \bigg) \\ 
&\>\>\>\>\>\>\>-\frac{1}{k_1} \cdot \Big( \ln\Big( J_k^B \Big( \frac{x}{\sqrt{t}},k_1 \frac{y}{\sqrt{t}} \Big) \Big) + \ln( J_k^B(w,k_1 y)) - \ln( J_k^B(x,k_1 w)) \Big),\notag
\end{align*}
where
$$V_{k_2}(y) \coloneqq \prod\limits_{i,j=1,\ldots,N, i<j} (y_i^2-y_j^2)^{2 k_2}.$$
Using Theorem \ref{limit-BN-functions-intro} we now rewrite     \eqref{trans-tilde-h} as
\begin{equation}\label{K-Schlange}
K_{t/k_1,k}^{B,k_1 w}(x,A) 
=\int_A \exp( -k_1 \cdot (  H_{x,w,t}(y) + r_{k,x,w,t}(y) ) ) \cdot V_{k_2}(y) \> dy
\end{equation}
with some continuous function  $r_{k,x,w,t}$ on $C_N^B$ and
$$H_{x,w,t}(y) \coloneqq \sum\limits_{i=1}^N \bigg( \frac{y_i^2}{2t}-S\Big(\frac{x_iy_i}{t}\Big)-S(w_i y_i)-\ln(y_i^2)+S(x_i w_i)+\frac{x_i^2}{2t}+\frac{w_i^2t}{2}+\ln(2t)-1 \bigg)$$
with
$$S(z)\coloneqq\ln(2) + \sqrt{z^2+1} -1 - \ln(\sqrt{z^2+1}+1)=\ln(g_{z^2/4}(c^{max}_{z^2/4})) \quad\quad (z \in \R) $$
where $g_{z^2/4}$ and $c^{max}_{z^2/4}$ are as in Lemma \ref{Maximum g_z}.
We next analyze $\widetilde{H}_{k,x,w,t}$ and $H_{x,w,t}$
where we expect that the minimizer of $H_{x,w,t}$ is the limit of $X^{B,k_1w}_{t/k_1,k,x}$ for $k_1 \to \infty$.
For this we
follow the approach in the introduction and  solve the ODE \eqref{AWP y-intro} to get a candidate for this minimum.
By the additive structure of $H_{x,w,t}(y)$ it is sufficient to consider  the case $N=1$:

\begin{lemma}\label{DGL y}
For all $w \in \R$ and $x > 0$, the initial value problem
\begin{align*}
y'(t)=\frac{\sqrt{y(t)^2w^2+1}}{y(t)}, \; y(0)=x \label{AWP y}
\end{align*}
has the unique solution $y(t) = y_{x,w,t} \coloneqq \sqrt{x^2 + w^2 t^2 + 2t \sqrt{x^2 w^2 + 1}}$.
Moreover, for $x=0$, it admits exactly two solutions, namely $y(t)=\pm y_{0,w,t}$.
\end{lemma}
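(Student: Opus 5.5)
The plan is to solve the ODE by separation of variables, after first turning it into an equation for the squared function. Since the right-hand side $\sqrt{y^2w^2+1}/y$ is singular at $y=0$, the cases $x>0$ and $x=0$ are treated separately.

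\emph{Case $x>0$.} Near $t=0$ the right-hand side $F(y):=\sqrt{y^2w^2+1}/y$ is locally Lipschitz on $]0,\infty[$. By Picard--Lindelöf there is a unique local solution, and it stays positive. Since $F>0$ there, the solution is increasing, so it never approaches the singularity at $0$ for $t\ge0$. Uniqueness therefore holds on the whole maximal interval.

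To find the solution, put $u:=y^2$. Then $u'=2yy'=2\sqrt{w^2u+1}$. Separating variables gives
$$\frac{d}{dt}\sqrt{w^2u+1}=\frac{w^2u'}{2\sqrt{w^2u+1}}=w^2,$$
so $\sqrt{w^2u(t)+1}=\sqrt{w^2x^2+1}+w^2t$. For $w\neq0$, squaring yields
$$u=x^2+w^2t^2+2t\sqrt{x^2w^2+1}.$$
For $w=0$ the equation is simply $u'=2$, and the formula reduces to $u=x^2+2t$, so it remains valid. Taking the positive root gives $y_{x,w,t}$.

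I would then verify directly that $y_{x,w,t}$ solves the problem. Using $w^2u+1=(\sqrt{w^2x^2+1}+w^2t)^2$, one gets $2yy'=u'=2w^2t+2\sqrt{x^2w^2+1}=2\sqrt{w^2y^2+1}$. Since the solution exists for all $t\ge0$, it coincides with the unique maximal solution.

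\emph{Case $x=0$.} Here $F$ is not defined at $y=0$, so the equation is read as $yy'=\sqrt{y^2w^2+1}$, or as a limit. Any solution satisfies $u'=2\sqrt{w^2u+1}$ with $u(0)=0$. This ODE for $u$ has a Lipschitz right-hand side near $u=0$ (because $w^2u+1\ge1$), so $u$ is uniquely determined as $u=y_{0,w,t}^2=w^2t^2+2t$. Hence every solution satisfies $y^2=y_{0,w,t}^2$.

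Since $y$ is continuous and $y(t)\neq0$ for $t>0$ (because $u>0$), $y$ has constant sign on $]0,\infty[$. Thus $y=\pm y_{0,w,t}$, and both functions are solutions: the minus sign works because $y\mapsto -y$ changes both $y'$ and $1/y$ by a sign. Conversely, both are differentiable for $t>0$ and continuous at $0$, which gives exactly two solutions.

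The main obstacle is the $x=0$ case, specifically fixing the sense in which the singular point $0$ is treated and excluding sign changes. The substitution $u=y^2$, which removes the singularity, handles both issues.
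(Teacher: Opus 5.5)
Your proof is correct and uses the same separation-of-variables argument the paper invokes in a single line. The substitution $u=y^2$, the Picard--Lindel\"of uniqueness arguments, and the sign argument for $x=0$ supply details the paper omits; one small precision is that for $x=0$ the equation can only be required for $t>0$ (together with continuity at $t=0$), because $\pm y_{0,w,t}$ have infinite one-sided derivative at $t=0$, and this is exactly the solution concept your final paragraph uses.
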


\begin{proof} This follows immediately from the method of separation of variables.
\end{proof}

As expected, we now obtain:

\begin{lemma}\label{Minimum H}
Let $N=1$, $x,w \geq 0$ and $t>0$. Then
 $H_{x,w,t}:]0,\infty[\to\mathbb R$ is a strictly convex function which has a unique minimum at  $y_{x,w,t}$ defined in Lemma \ref{DGL y}. 
     Moreover, $H_{x,w,t}(y_{x,w,t})=0$.
     \end{lemma}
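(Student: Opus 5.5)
For $N=1$ the function is
$$H(y)=\frac{y^2}{2t}-S\Big(\frac{xy}{t}\Big)-S(wy)-2\ln y+C,\qquad C:=S(xw)+\frac{x^2}{2t}+\frac{w^2t}{2}+\ln(2t)-1 .$$
The plan is to show directly that $H$ is strictly convex on $]0,\infty[$, then verify that $H'(y_{x,w,t})=0$, and finally evaluate $H(y_{x,w,t})$.

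\textbf{Step 1: derivatives of $S$.} From $S(z)=\ln 2+\sqrt{z^2+1}-1-\ln(\sqrt{z^2+1}+1)$ we get
$$S'(z)=\frac{z}{\sqrt{z^2+1}}\Big(1-\frac{1}{\sqrt{z^2+1}+1}\Big)=\frac{z}{\sqrt{z^2+1}+1}=\frac{\sqrt{z^2+1}-1}{z},$$
which is the quantity appearing in \eqref{limit-BN-functions-intro2}. Differentiating once more gives $S''(z)=\big(1-1/\sqrt{z^2+1}\big)/z^2\in[0,1/2]$.

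\textbf{Step 2: strict convexity.} Write $\sqrt{\cdot}$ for the square roots at $z=xy/t$ and $z=wy$ respectively. Then
$$H''(y)=\frac1t-\frac{x^2}{t^2}S''\Big(\frac{xy}t\Big)-w^2S''(wy)+\frac{2}{y^2}=\frac1t+\frac{1}{y^2}\Big(\frac{1}{\sqrt{1+x^2y^2/t^2}}+\frac{1}{\sqrt{1+w^2y^2}}\Big)>0 .$$
So $H$ is strictly convex. Since $H\to\infty$ both as $y\to0$ (because of $-2\ln y$) and as $y\to\infty$ (the quadratic term dominates, as $S$ grows only linearly), $H$ has a unique minimizer, namely the unique zero of $H'$.

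\textbf{Step 3: the critical point.} The equation $H'(y)=0$ reads
$$\frac{y}{t}-\frac{\sqrt{1+x^2y^2/t^2}-1}{y}-\frac{\sqrt{1+w^2y^2}-1}{y}-\frac{2}{y}=0,\qquad\text{i.e.}\qquad \frac{y^2}{t}=\sqrt{1+\frac{x^2y^2}{t^2}}+\sqrt{1+w^2y^2}.$$
Set $y=y_{x,w,t}$ and $s:=\sqrt{x^2w^2+1}$, so $y^2=x^2+w^2t^2+2ts$. The main computation is to identify both radicals explicitly. I expect
$$t^2+x^2y^2=(x^2+ts)^2,\qquad 1+w^2y^2=(s+w^2t)^2 .$$
Both follow by expanding and using $s^2=1+x^2w^2$; for example, $(x^2+ts)^2=x^4+2tx^2s+t^2+t^2x^2w^2=t^2+x^2y^2$. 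Hence the right-hand side equals $(x^2+ts)/t+s+w^2t=(x^2+2ts+w^2t^2)/t=y^2/t$, so $y_{x,w,t}$ is the critical point and therefore the global minimizer.

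\textbf{Step 4: the minimum value.} This is the delicate bookkeeping step. At $y=y_{x,w,t}$ set $A:=\sqrt{1+x^2y^2/t^2}=(x^2+ts)/t$ and $B:=\sqrt{1+w^2y^2}=s+w^2t$, so that $A+B=y^2/t$. The non-logarithmic terms of $H(y)$ then sum to
$$\frac{y^2}{2t}-(A-1)-(B-1)+(s-1)+\frac{x^2}{2t}+\frac{w^2t}{2}-1=-\frac{y^2}{2t}+s+\frac{x^2}{2t}+\frac{w^2t}{2}=0 .$$
It remains to show that the logarithmic terms vanish, i.e.
$$\ln\frac{2(A+1)(B+1)}{(s+1)\,2\,y^2\cdot 2t}=0\quad\Longleftrightarrow\quad (A+1)(B+1)=2t\,y^2(s+1)/t\cdot\tfrac{1}{1}\cdot\tfrac{1}{1},$$
written more simply as $t(A+1)(B+1)=y^2(s+1)$. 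Now $t(A+1)=x^2+ts+t$ and $B+1=s+1+w^2t$, so
$$(x^2+t(s+1))(s+1+w^2t)=x^2(s+1)+x^2w^2t+t(s+1)^2+w^2t^2(s+1).$$
Using $x^2w^2=s^2-1=(s-1)(s+1)$, the middle terms combine to $t(s+1)(s-1+s+1)=2ts(s+1)$. The whole expression is therefore $(s+1)(x^2+2ts+w^2t^2)=(s+1)y^2$, as required. The care needed is in tracking the $\ln2$, $\ln(2t)$ and $\ln y^2$ constants correctly; the algebra itself closes via the identity $x^2w^2=s^2-1$.

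The boundary cases $x=0$ or $w=0$ are covered by the same formulas, since $S'(0)=0$ by continuity.
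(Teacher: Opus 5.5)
Your argument is correct and is essentially the paper's proof. It uses the same formula $S'(z)=z/(\sqrt{z^2+1}+1)$, strict convexity from $H''>0$, the explicit radicals $A=(x^2+ts)/t$ and $B=s+w^2t$ at $y=y_{x,w,t}$ (so $A+B=y^2/t$), and the reduction of $H(y_{x,w,t})=0$ to $t(A+1)(B+1)=y^2(s+1)$. Your $H''(y)=\frac1t+\frac1{y^2}\bigl(\frac1A+\frac1B\bigr)$ is in fact the correct value; the paper's $\frac1t+\frac{1}{(A+B)y^2}$ is a slip, harmless for positivity.

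The one line to repair is the first display of Step 4, which is exactly the bookkeeping you flagged. The logarithmic terms are $-\ln 2+\ln(A+1)-\ln 2+\ln(B+1)-\ln y^2+\ln 2-\ln(s+1)+\ln 2+\ln t$. They sum to $\ln\frac{t(A+1)(B+1)}{(s+1)y^2}$, not to $\ln\frac{2(A+1)(B+1)}{(s+1)\,2\,y^2\cdot 2t}$. Consequently the stated equivalence with $(A+1)(B+1)=2y^2(s+1)$ is false. The identity you then actually verify, $t(A+1)(B+1)=y^2(s+1)$, is the right one, and your verification of it is correct, so the proof goes through once that display is replaced.
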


\begin{proof}
 Let
 $H:=H_{x,w,t}$,  $\alpha:=(x/t)^2$, $\beta:=w^2$,  $A:=\sqrt{\alpha y^2 +1}$, and $B:=\sqrt{\beta y^2 + 1}$.
We first check that $H$ is strictly convex.
As $\alpha y^2 = (A+1) (A-1)$ and $\beta y^2 = (B+1)(B-1)$, we obtain
$$S'(z)=\frac{z}{\sqrt{z^2+1}} \cdot \bigg( 1 - \frac{1}{\sqrt{z^2+1}+1} \bigg)=\frac{z}{\sqrt{z^2+1}+1}$$
and thus
\begin{equation}\label{H'}
    H'(y)=\frac{1}{y} \cdot \bigg( \frac{y^2}{t} - \frac{\alpha y^2}{A+1} - \frac{\beta y^2}{B+1}-2 \bigg)
= \frac{1}{y} \cdot \bigg(\frac{y^2}{t}-A-B\bigg).\end{equation}
This yields $H''(y)=\frac{1}{t} + \frac{1}{(A+B)y^2}>0$, i.e.,  $H$ is strictly convex.

We next check $H'(y_{x,w,t})=0$ which implies together with the strict convexity that $H$ has a unique minimum at $y_{x,w,t}$.
By \eqref{H'},  $H'(y_{x,w,t})=0$ is equivalent to
$$\frac{y_{x,w,t}^2}{t}=\sqrt{\alpha y_{x,w,t}^2 +1}+\sqrt{\beta y_{x,w,t}^2 + 1}.$$
If we put $R:=\sqrt{x^2w^2+1}$ and use
$y_{x,w,t}^2=x^2+w^2t^2+2t\sqrt{x^2w^2+1}$,
we obtain  that
$$\sqrt{1+\frac{x^2y_{x,w,t}^2}{t^2}}=\frac{x^2+tR}{t},
\quad\quad
\sqrt{1+w^2y_{x,w,t}^2}=w^2t+R.$$
Therefore,
\begin{align}\label{identity-opty}
\sqrt{1+\frac{x^2y_{x,w,t}^2}{t^2}}+\sqrt{1+w^2y_{x,w,t}^2}&=
\frac{x^2+tR}{t}
+w^2t+R\notag\\
&=
\frac{x^2+w^2t^2+2tR}{t}
=\frac{y_{x,w,t}^2}{t},
\end{align}
and thus $H'(y_{x,w,t})=0$ as claimed.

For the final step we use $A,B,R$ as above in the case $y=y_{x,w,t}$
and insert the function $S$ into $H$. We then obtain
$$H(y_{x,w,t})=
\frac{y^2_{x,w,t}+x^2+w^2t^2}{2t}-(A+B)
+\ln\!\left(
\frac{(A+1)(B+1)t}
{y^2_{x,w,t}(R+1)}
\right)+R.$$
Hence, by \eqref{identity-opty},
$$H(y_{x,w,t})=
\frac{x^2+w^2t^2-y^2_{x,w,t}}{2t}+
\ln\left(\frac{(A+1)(B+1)t}{y^2_{x,w,t}(R+1)}\right)+R=
\ln\left(\frac{(A+1)(B+1)t}{y^2_{x,w,t}(R+1)}\right) .$$
Moreover, as
$$A+1=\frac{x^2+t(R+1)}{t},
\quad\quad
B+1=w^2t+R+1,$$
we arrive at
$(A+1)(B+1)t=y^2_{x,w,t}(R+1)$.
Hence $H(y_{x,w,t})=0$ as claimed.
\end{proof}

We next recapitulate some well-known fact:

\begin{lemma}\label{Abschaetzung 2 Besselfunktion B}
For $k_1, k_2 \geq 0$, and $x,y \in \R^N$, 
$1 \leq J_k^B(x,y) \leq \exp\left(||x||_2 \cdot ||y||_2\right).$
\end{lemma}

\begin{proof} The first inequality follows from the facts that the term corresponding to $\lambda=(0,\ldots,0)$ in \eqref{series-rep-bessel} is equal to $1$ and that the other terms are nonnegative. The second inequality follows e.g.~immediately from
 Definition 2.35 and Proposition 2.36 in \cite{R3}.
\end{proof}

This result implies the following estimate for $\widetilde{H}_{k,x,w,t}(y)$:

\begin{lemma}\label{Abschaetzung H}
Let $N \in \N$, $x,w \in \R^N$, $y \in \,]0,\infty[\,^N$, $k_1,k_2 \geq 0$, and $t>0$. Then,
\begin{align*}
\widetilde{H}_{k,x,w,t}(y) 
\geq \frac{||y||_2^2}{4t}-\sum\limits_{i=1}^N \ln(y^2_i)+N \cdot \ln\Big( \frac{2t}{e} \Big) - \frac{3 ||x||_2^2}{2t} - \frac{3 ||w||_2^2 t}{2}
\eqqcolon G_{x,w,t}(y).
\end{align*}
\begin{proof}
Using Lemma \ref{Abschaetzung 2 Besselfunktion B}, we obtain
\begin{align*}
\widetilde{H}_{k,x,w,t}(y) &\geq G_{x,w,t}(y) + \frac{||y||_2^2}{4t} + \frac{2 ||x||_2^2}{t} + 2 ||w||_2^2t - \frac{1}{t} \cdot ||x||_2 \cdot ||y||_2 - ||w||_2 \cdot ||y||_2 \\
&= G_{x,w,t}(y) + \Big( \Big|\Big|\frac{\sqrt{2} \cdot x}{\sqrt{t}}\Big|\Big|_2 - \Big|\Big|\frac{y}{\sqrt{8t}}\Big|\Big|_2 \Big)^2 + \Big( ||\sqrt{2t} \cdot w||_2 - \Big|\Big|\frac{y}{\sqrt{8t}}\Big|\Big|_2 \Big)^2 \\
&\geq G_{x,w,t}(y).
\end{align*}
\end{proof}
\end{lemma}

Note that for any $\epsilon>0$, the logarithm is Lipschitz continuous on $[\epsilon,\infty[\,$.
Hence, for $k_2 \geq 0$, the Stirling formula, Eq. \eqref{limit-BN-functions-intro1} in Theorem \ref{limit-BN-functions-intro}, 
and Lemma \ref{Abschaetzung 2 Besselfunktion B} imply that
locally uniformly w.r.t. $x,w,y \in C_N^B$ and $t>0$,
$\widetilde{r}_{k,t} \to 0$ and $r_{k,x,w,t}(y) \to 0$ for $k_1 \to \infty$.
On the other hand, Lemma \ref{Abschaetzung H} yields the following result for
$\widetilde{H}_{k,x,w,t}(y)$ for $||y|| \ra \infty$:

\begin{corollary}\label{H-Schlange y gross}
Let $N \in \N$. Then uniformly in $k_1,k_2 \geq 0$ and locally uniformly in $x,w \in \R^N$ and $t>0$, 
$\widetilde{H}_{k,x,w,t}(y) \to \infty$ for $y \in \,]0,\infty[\,^N$ with $||y|| \to \infty$.
\end{corollary}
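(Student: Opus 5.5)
The plan is to read off the statement directly from the lower bound in Lemma \ref{Abschaetzung H}. That lemma gives, for all $k_1,k_2\ge0$, all $x,w\in\R^N$, $t>0$ and $y\in\,]0,\infty[^N$,
$$\widetilde{H}_{k,x,w,t}(y)\ge G_{x,w,t}(y)=\frac{\|y\|_2^2}{4t}-\sum_{i=1}^N\ln(y_i^2)+N\ln\Big(\frac{2t}{e}\Big)-\frac{3\|x\|_2^2}{2t}-\frac{3\|w\|_2^2t}{2}.$$
Since $G_{x,w,t}$ does not involve $k$ at all, uniformity in $k_1,k_2\ge0$ is automatic. It therefore suffices to show that $G_{x,w,t}(y)\to\infty$ as $\|y\|\to\infty$, locally uniformly in $(x,w,t)$.

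First I would fix a compact set $L\subset\R^N\times\R^N\times\,]0,\infty[$. On $L$ we have $t\in[t_0,t_1]$ with $t_0>0$, and $\|x\|_2,\|w\|_2\le c$. The terms $N\ln(2t/e)-3\|x\|_2^2/(2t)-3\|w\|_2^2t/2$ are then bounded below by a constant $-D$ depending only on $L$. Next I would split the remaining $y$-dependent part coordinatewise:
$$\frac{\|y\|_2^2}{4t}-\sum_i\ln(y_i^2)\ge\sum_{i=1}^N\Big(\frac{y_i^2}{4t_1}-2\ln y_i\Big).$$

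The only point needing care is that $-\ln(y_i^2)$ is not bounded below by something decaying; it goes to $+\infty$ as $y_i\to0$, so it only helps. Each summand $\phi(s):=s^2/(4t_1)-2\ln s$ on $]0,\infty[$ is bounded below by a finite constant $-m$, because $\phi\to\infty$ both as $s\to0^+$ and as $s\to\infty$. Moreover $\phi(s)\ge s^2/(8t_1)-m'$ for some constant $m'$, since $s^2/(8t_1)-2\ln s$ is also bounded below.

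Combining these, I get
$$G_{x,w,t}(y)\ge\frac{\|y\|_2^2}{8t_1}-Nm'-D,$$
where the constant depends only on $L$. This tends to $\infty$ as $\|y\|\to\infty$, uniformly on $L$ and in $k$, which is the claim. I expect no real obstacle here. The only subtle step is handling the logarithmic terms by absorbing them into half of the quadratic term, as above.
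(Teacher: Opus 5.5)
Your proposal is correct and follows the paper's route exactly. The paper obtains this corollary directly from the $k$-independent lower bound $G_{x,w,t}$ in Lemma \ref{Abschaetzung H}. Your estimate $G_{x,w,t}(y)\ge \|y\|_2^2/(8t_1)-Nm'-D$ on compacta simply supplies the details the paper leaves implicit.
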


Applying the previous results, we can now derive the main result of this section:

\begin{theorem}\label{schwacher Limes K-Schlange}
Let $N \in \N$,  $k_2 \geq 0$, $x,w \in C_N^B$, and $t >0$. Then, 
\begin{align*}
X_{t/k_1,k,x}^{B,k_1 w} \xrightarrow[k_1 \to \infty]{} y_{x,w,t} \coloneqq (y_{x_1,w_1,t}, \ldots, y_{x_N,w_N,t}) \text{ in probability}
\end{align*}
with $y_{x_i,w_i,t}$ as  in Lemma \ref{DGL y}.
This convergence is locally uniformly in $x,w \in C_N^B$ and $t>0$ in the sense that
 for all $\epsilon>0$ and  compacta $U \subset C_N^B \times C_N^B \times \,]0,\infty[\,$,
\begin{align*}
\sup\limits_{(x,w,t) \in U} P(||X_{t/k_1,k,x}^{B,k_1 w}-y_{x,w,t}|| \geq \epsilon) 
\xrightarrow[k_1 \to \infty]{} 0.
\end{align*}
\begin{proof}
Let $U$ and $\epsilon$ be as in the theorem, and let $\delta>0$.
Then \eqref{trans-tilde-h}, Corollary \ref{H-Schlange y gross}, and the properties of $\widetilde{r}_{k,t}$ imply 
that there exist a compactum $A \subset C_N^B$ and  $k_1' \geq 1$ such that
for  $(x,w,t) \in U$ and $k_1 \geq k_1'$,
\begin{align}
K_{t/k_1,k}^{B,k_1 w}(x,C_N^B \setminus A) \leq \frac{\delta}{2}. \label{Kontrolle K-Schlange y gross}
\end{align}
Furthermore, Lemma \ref{Minimum H} as well as   continuity and compactness arguments show that
\begin{align}
\inf\{H_{x,w,t}(y) \, : \, (x,w,t,y) \in U \times A, \, ||y-y_{x,w,t}|| \geq \epsilon\}>0 \label{H>0}
\end{align}
where we used the convention $\ln(0)=-\infty$.
Therefore \eqref{K-Schlange} and the properties of $r_{k,x,w,t}(y)$ imply that there exists a $\widetilde{k}_1>k_1'$ such that for all $k_1 \geq \widetilde{k}_1$ and $(x,w,t) \in U$, 
\begin{align}
K_{t/k_1,k}^{B,k_1 w}(x,\{y \in A \, : \, ||y-y_{x,w,t}|| \geq \epsilon \}) \leq \frac{\delta}{2}. \label{Kontrolle K-Schlange y entfernt von Minimalstelle}
\end{align}
Finally, combining \eqref{Kontrolle K-Schlange y gross} and \eqref{Kontrolle K-Schlange y entfernt von Minimalstelle},
we obtain that
$K_{t/k_1,k}^{B,k_1 w}(x,\{y \in C_N^B \, : \, ||y-y_{x,w,t}|| \geq \epsilon \}) \leq \delta$
for all $(x,w,t) \in U$ and $k_1 \geq \widetilde{k}_1$.
As $\delta>0$ could be chosen arbitrarily small, the claim is proved.
\end{proof}
\end{theorem}

\begin{proof}[Proof of Theorem \ref{limit-processes-intro}]
For $t=0$, the assertion follows from $X_{0/k_1,k,x}^{B,k_1w}=x=y_{x,w,0}$.
Moreover, for $t>0$, the claim is a corollary of Theorem \ref{schwacher Limes K-Schlange}.
\end{proof}

In the end of this section, we briefly consider
a generalization of Example \ref{1-dim-example} and Corollary \ref{limit-processes-intro-cor-1dim}:

\begin{example} 
\label{N-dim-example}
Let $M \geq N \geq 1$ be integers and $\mathbb F=\mathbb R,\mathbb C,\mathbb H$ with real dimension $d=1,2,4$.
Consider the  real Euclidean space $\mathbb F^{M \times N}$  with the scalar product $\langle A, B \rangle = Re \> tr(A^*B)$
where $^* $ is the usual adjoint.
Let $\Pi_N(\mathbb F)$ be the cone of all $N \times N$ positive semidefinite matrices over $\F$,
$\sigma_N : \Pi_N(\F) \to C_N^B$ be the ordered spectral map,
and $p:\F^{M \times N} \to C_N^B$ the  map with $p(A)=\sqrt{\sigma_N(A^* \cdot A)} \> \forall A \in \F^{M \times N}$ 
where  $\sqrt{x} \coloneqq (\sqrt{x_1},\ldots,\sqrt{x_N})\in C_N^B$ for  $x \in C_N^B$.
Then for all $x \in C_N^B$, we have
$p^{-1}(\{x\}) = \{U \Sigma_x V^* \, | \, U \in U_M(\F), \, V \in U_N(\F)\}$ 
with $\Sigma_x=(diag(x),0)^T \in \F^{M \times N}$; see  e.g. Chapter 3 of \cite{HJ}, and for the quaternionic case  Section 7 of \cite{Z}.
Moreover, by \cite{R4}, the associated spherical functions are given by the Bessel functions $J_{(k_1,k_2)}^B$
with $(k_1,k_2)=((M-N+1) \cdot d/2 - 1/2, d/2)$. More precisely, for $x,y\in C_N^B$, these $k_1,k_2$ and the normalized Haar measures $dU, dV$ on 
$U_M(\F)$ and $ U_N(\F)$ respectively we have
\begin{equation}\label{spherical-matrix}
J_{(k_1,k_2)}(x,y)= \int_{ U_M(\F)}\int_{ U_N(\F)}  e^{ \langle U \Sigma_x V^*,\Sigma_y  \rangle}   \>  dV \> dU.
\end{equation}
This leads to the following connection between Brownian motions $(B_t)_{t\ge0}$ on $\F^{M \times N}$ and Bessel processes
on $C_N^B$ with these  $k_1,k_2$ and with drift; see \cite{AuV, V2}:
We  fix some drift matrix $\Lambda \in \F^{M \times N}$ as well as some $x\in C_N^B$ and consider the
Brownian motion $(B_t+t\Lambda)_{t\ge0}$ on $\F^{M \times N}$ with  drift  $\Lambda$ where we assume that
the Brownian motion has the initial distribution
$$ d\sigma_{M,x,\Lambda} (y) :=\frac{1}{J_{(k_1,k_2)}(x,p(\Lambda))} e^{\langle \Lambda,y  \rangle}  d\sigma_{M,x} (y) \quad\quad (y\in \F^{M \times N})$$
for  the probability measure  $\sigma_{M,x}$ on $p^{-1}(\{x\}) $ which appears as push forward of
$dU\otimes dV\in M^1(U_M(\F)\times U_N(\F))$ under $(U,V)\mapsto U \Sigma_x V^* $.
Note that the measures $\sigma_{M,x,\Lambda}$ are probability measures by \eqref{spherical-matrix}.
Under this initial condition,  $(p(B_t+t\Lambda))_{t\ge0}$ then is a Bessel process on $C_N^B$
with $(k_1,k_2)=((M-N+1) \cdot d/2 - 1/2, d/2)$
with start in $x$ and drift 
$w:=p(\lambda)\in C_N^B$ by \cite{V2}.
Therefore, Theorem \ref{limit-processes-intro} and  Theorem \ref{schwacher Limes K-Schlange} imply:
\end{example}

\begin{corollary}
  Let $x,w\in C_N^B$ and
$t \geq 0$.
For every integer $M \ge N$, choose some matrix $\Lambda_M \in \F^{M \times N}$ with $p(\Lambda_M)=w$
and the probability measure $\sigma_{M,x,((M-N+1) d/2 - 1/2)\Lambda_M}$ on $p^{-1}(\{x\}) \subset \F^{M \times N}$
as well as a Brownian motion $(B_t^M)_{t \geq 0}$ on  $\F^{M \times N}$ with this initial distribution.
Then, for $M \to \infty$, the random variables $p(B_{t/((M-N+1) d/2 - 1/2)}^M+t \Lambda_M)$
tend in probability to $y_{x,w,t}\in C_N^B$ as defined in Theorem \ref{schwacher Limes K-Schlange} 
where the convergence is locally uniform w.r.t. $x,w \in C_N^B$ and $t>0$.
\end{corollary}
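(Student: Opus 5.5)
The corollary follows by combining Example \ref{N-dim-example} with Theorem \ref{schwacher Limes K-Schlange}, with $k_1 := (M-N+1)d/2-1/2$ and $k_2 := d/2$ fixed. We index everything by $k_1$ instead of $M$; then $k_1\to\infty$ exactly when $M\to\infty$.

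\emph{Step 1: identify the matrix process as a Bessel process with scaled drift.} For fixed $M$, we apply Example \ref{N-dim-example} with drift matrix $k_1\Lambda_M$ in place of $\Lambda$. Then $(p(B^M_s + s k_1\Lambda_M))_{s\ge0}$, started with initial law $\sigma_{M,x,k_1\Lambda_M}$, is a Bessel process of type $B_N$ with multiplicity $(k_1,k_2)$, start $x$, and drift $p(k_1\Lambda_M)=k_1p(\Lambda_M)=k_1 w$. Here $p$ is positively homogeneous because the singular values scale linearly. We then evaluate at $s=t/k_1$, which gives $s k_1\Lambda_M = t\Lambda_M$. Hence $p(B^M_{t/k_1}+t\Lambda_M)$ has the same distribution as $X^{B,k_1w}_{t/k_1,(k_1,k_2),x}$.

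One point needs checking: the statement writes the time shift as $t\Lambda_M$, so the drift of the Brownian motion $B^M$ is really $k_1\Lambda_M$. This matches the prescribed initial distribution $\sigma_{M,x,k_1\Lambda_M}$, so the identification is consistent. Moreover, the law of $p(\cdot)$ depends on $\Lambda_M$ only through $p(\Lambda_M)=w$, by the invariance of $\sigma_{M,x}$ under $U_M(\F)\times U_N(\F)$. So the arbitrary choice of $\Lambda_M$ plays no role.

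\emph{Step 2: apply the limit theorem.} For $t>0$, Theorem \ref{schwacher Limes K-Schlange} with this fixed $k_2=d/2$ gives, for every $\epsilon>0$ and every compactum $U\subset C_N^B\times C_N^B\times\,]0,\infty[$,
$$\sup_{(x,w,t)\in U} P\bigl(\|p(B^M_{t/k_1}+t\Lambda_M)-y_{x,w,t}\|\ge\epsilon\bigr)\to 0.$$
The convergence is along the discrete sequence $k_1=(M-N+1)d/2-1/2\to\infty$, which is a subsequence of the continuous parameter limit. This yields convergence in probability, locally uniformly in $x,w,t$.

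For $t=0$, the random variable is $p(B^M_0)$, and $B^M_0$ is concentrated on $p^{-1}(\{x\})$. So $p(B^M_0)=x=y_{x,w,0}$ almost surely, exactly as in the proof of Theorem \ref{limit-processes-intro}.

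The only delicate point is the bookkeeping in Step 1: matching the time scaling and the drift scaling, and verifying the homogeneity $p(k_1\Lambda)=k_1p(\Lambda)$. Everything else is a direct invocation of the earlier results.
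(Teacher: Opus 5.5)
Your proposal is correct and is essentially the paper's argument. You apply Example \ref{N-dim-example} with drift matrix $k_1\Lambda_M$, use $p(k_1\Lambda_M)=k_1w$ and the time change $s=t/k_1$ to identify $p(B^M_{t/k_1}+t\Lambda_M)$ in law with $X^{B,k_1w}_{t/k_1,(k_1,d/2),x}$, and then invoke Theorem \ref{schwacher Limes K-Schlange} (plus the trivial case $t=0$, as in Theorem \ref{limit-processes-intro}) along $k_1=(M-N+1)d/2-1/2\to\infty$; your bookkeeping of the drift scaling and your remark that the law depends on $\Lambda_M$ only through $p(\Lambda_M)$ make explicit what the paper leaves implicit.
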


\section{One-dimensional Dunkl kernels and processes}

In this section we extend the preceding results to one-dimensional Dunkl kernels and Dunkl processes with drift,
i.e. we consider the case $R=B_1$ and $k=k_1$.
The restriction to $N=1$ is caused by the fact that only in this case a simple explicit integral representation
is available which allows to extend the results in Section 2. We do not know whether series representations
of the Dunkl kernels of type B in terms of non-symmetric Jack polynomials can be used to extend
the results in Sections 3 and 4.
For $N=1$, we start with
the  well-known integral representation
\begin{equation}\label{int-rep-Dunkl-1dim}
  E_k(x,y)=\frac{\Gamma(k+1/2)}{\Gamma(1/2)\Gamma(k)}\int_{-1}^1 e^{xy t} (1-t)^{k-1}(1+t)^k \> dt\quad\quad
   (x,y\in\mathbb C, \>\>k>0)
  \end{equation}
for the Dunkl kernels  $E_k$ from \cite{R1}.
The approach in Section 2 then immediately leads to:

\begin{proposition}\label{limit-Dunkl-kernel-1dim}
  Then, locally uniformly in $x,y\in \mathbb R$,
  \begin{equation}\label{limit-dunklkernel1} 
\lim_{k\to\infty} E_k(x,ky)^{1/k}= e^{\sqrt{ (xy)^2+1}\> -1}\frac{2}{(xy)^2}(\sqrt{( xy)^2+1}\> -1)
  \end{equation}
  and
\begin{equation}\label{limit-dunklkernel2} 
\lim_{k\to\infty} \frac{\frac{d}{dx}E_k(x,ky)}{k\cdot E_k(x,ky)}=\frac{1}{x}(\sqrt{( xy)^2+1}\> -1) .
  \end{equation}
\end{proposition}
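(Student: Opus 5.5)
Substituting $y\mapsto ky$ in \eqref{int-rep-Dunkl-1dim} gives
$$E_k(x,ky)=\frac{\Gamma(k+1/2)}{\Gamma(1/2)\Gamma(k)}\int_{-1}^1 \bigl(e^{xyt}(1-t^2)\bigr)^{k}\,\frac{1}{1-t}\> dt ,$$
so the $k$-th power structure is exactly that of Lemma~\ref{Laplace-method-roots}. The singular weight $1/(1-t)$ is handled by restricting to $[-1,1-\eta]$ with $h(t)=1/(1-t)$ there. For the upper bound, I use the crude estimate $\int_{-1}^1 (e^{xyt}(1-t^2))^k(1-t)^{-1}dt\le \max_t (e^{xyt}(1-t^2))^{k-1}\cdot\int_{-1}^1 e^{|xy|}(1+t)\,dt$, and the exponent $k-1$ instead of $k$ is harmless after taking $k$-th roots. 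For the normalizing constant, Stirling gives $\Gamma(k+1/2)/\Gamma(k)\sim\sqrt k$, so its $k$-th root tends to $1$. The function $f(t)=e^{xyt}(1-t^2)$ on $[-1,1]$ has, by the computation in the proof of Lemma~\ref{maxima-cone2}, a unique maximum at $t(xy)=(\sqrt{(xy)^2+1}-1)/(xy)$ with value $e^{\sqrt{(xy)^2+1}-1}\frac{2}{(xy)^2}(\sqrt{(xy)^2+1}-1)$. This formula was derived there for $xy\ge0$, but for $xy<0$ the same critical-point computation works by the symmetry $t\mapsto -t$, since $f$ only depends on $|xy|$ through its maximal value. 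The maximizer lies in the open interval $(-1,1)$ and depends continuously on $(x,y)$, so for compact sets of $(x,y)$ one fixed $\eta>0$ keeps all maximizers inside $[-1,1-\eta]$. Lemma~\ref{Laplace-method-roots}, applied with $X$ a compact set of $(x,y)$ and $Y=[-1,1-\eta]$, then gives \eqref{limit-dunklkernel1} locally uniformly.

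For \eqref{limit-dunklkernel2}, differentiating under the integral gives
$$\frac{\frac{d}{dx}E_k(x,ky)}{k\,E_k(x,ky)}=\frac{\int_{-1}^1 f(t)^k\, \frac{yt}{1-t}\,dt}{\int_{-1}^1 f(t)^k\,\frac{1}{1-t}\,dt}.$$
Lemma~\ref{Laplace-method-fraction} requires nonnegative continuous weights, so I split the numerator weight as $yt/(1-t)=(yt+C)/(1-t)-C/(1-t)$ with a constant $C>|y|$ chosen uniformly on the compact set. Then $y\cdot(\text{ratio})=\text{ratio}_1-C$, where both pieces have positive weights $h=(yt+C)/(1-t)$ and $f=1/(1-t)$ on $[-1,1-\eta]$. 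The contributions of $[1-\eta,1]$ to both numerator and denominator are exponentially negligible relative to the main term, by the same comparison $\rho^k/R^k$ used in the proof of Lemma~\ref{Laplace-method-fraction}; the remaining $1/(1-t)$ singularity is integrable against $f(t)^k$, which vanishes like $(1-t)^k$ near $t=1$.

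The maximum set is the single point $t(xy)$, so conditions (1) and (2) of Lemma~\ref{Laplace-method-fraction} hold trivially, and the ratio tends to $y\,t(xy)=(\sqrt{(xy)^2+1}-1)/x$ for $x\ne0$. The main obstacle is the degenerate situations. When $xy=0$ the maximizer is $t=0$, and the limit is $0$, consistent with the right-hand side read as a limit in $x$. Near $x=0$ the right-hand side must be interpreted as $x y^2/(\sqrt{(xy)^2+1}+1)$, which is continuous, so local uniformity on all of $\mathbb R$ follows once the limit is proved in the form $y\,t(xy)$ with $t$ continuous. I would handle these cases by stating everything in terms of the continuous function $y\,t(xy)$ and by using the uniform-in-compacta versions of the two Laplace lemmas.
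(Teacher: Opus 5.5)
Your proposal is correct and takes the same route as the paper. You write the integrand as $(e^{xyt}(1-t^2))^k(1-t)^{-1}$, locate the unique maximizer $t_0=(\sqrt{(xy)^2+1}-1)/(xy)$, and apply Lemmas~\ref{Laplace-method-roots} and~\ref{Laplace-method-fraction}; your extra care (truncating to $[-1,1-\eta]$ because $(1-t)^{-1}$ is not continuous at $t=1$, shifting the signed weight $yt/(1-t)$ by $C/(1-t)$ to make it nonnegative, and reading the right-hand side of \eqref{limit-dunklkernel2} at $x=0$ as $xy^2/(\sqrt{(xy)^2+1}+1)$) supplies technical details that the paper's one-line proof leaves implicit.
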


\begin{proof} The function $f(t):=  e^{xy t}(1-t^2)$  is nonnegative on $[-1,1]$ with a unique maximum at
  $t_0:=\frac{1}{xy}(\sqrt{( xy)^2+1}\> -1)$ with 
  $$f(t_0)=e^{\sqrt{ (xy)^2+1}\> -1}\frac{2}{(xy)^2}(\sqrt{( xy)^2+1}\> -1).$$
  The proposition now follows from Lemmas \ref{Laplace-method-roots} and \ref{Laplace-method-fraction}.
  \end{proof}

We now recapitulate from 
Definition and Theorem 3.4 in \cite{V2} that
the one-dimensional Dunkl processes $(X^{Dunkl,w}_{t,k,x})_{t \geq 0}$ and hybrid Dunkl-Bessel processes $(X^{DuBe,w}_{t,k,x})_{t \geq 0}$,
with multiplicity $k \geq 0$, drift $w \in \R$, and starting point $x\in\R$, are Feller processes on $\R$.
The first ones have the generators
\begin{align*}
L_k^{Dunkl,w}f=\frac{1}{2} f''(x) + k \cdot \frac{f'(x)}{x} + \frac{\langle \partial_x E_k(x,w),f'(x) \rangle}{E_k(x,w)} 
- \frac{k}{2} \cdot \frac{E_k(-x,w)}{E_k(x,w)} \cdot \frac{f(x)-f(-x)}{x^2}
\end{align*}
and the transition probabilities
\begin{align}
\label{kernels-Dunkl}
K^{Dunkl,w}_{t,k}(x,A)=\frac{e^{-w^2 t/2}}{(2t)^{N \cdot (k + 1/2)} \cdot \Gamma(k + 1/2)}
\cdot \int\limits_A e^{-(x^2+y^2)/(2t)} \cdot \frac{E_k(\frac{x}{\sqrt t},\frac{y}{\sqrt{t}}) \cdot E_k(y,w)}{E_k(x,w)} \cdot y^{2k} dy.
\end{align}
Moreover, with the Bessel functions  $J_k:=J_k^{B_1}$, the second ones have the generators 
\begin{align*}
L_k^{DuBe,w}f=\frac{1}{2} f''(x) + k \cdot \frac{f'(x)}{x} + \frac{\langle \partial_x J_k(x,w),f'(x) \rangle}{J_k(x,w)} 
- \frac{k}{2} \cdot \frac{f(x)-f(-x)}{x^2}
\end{align*}
and the transition probabilities
\begin{align}
\label{kernels-Dunkl2}
K^{DuBe,w}_{t,k}(x,A)=\frac{e^{-w^2 t/2}}{(2t)^{N \cdot (k + 1/2)} \cdot \Gamma(k + 1/2)}
\cdot \int\limits_A e^{-(x^2+y^2)/(2t)} \cdot \frac{E_k(\frac{x}{\sqrt t},\frac{y}{\sqrt{t}}) \cdot J_k(y,w)}{J_k(x,w)} \cdot y^{2k} dy.
\end{align}

Similar to the preceding section, we now study the renormalized processes $(X^{Dunkl,kw}_{t/k,k,x})_{t \geq 0}$ 
and $(X^{DuBe,kw}_{t/k,k,x})_{t \geq 0}$ and the associated transition probabilities $K_{t/k,k}^{Dunkl,kw}(x,A)$ and $K_{t/k,k}^{DuBe,kw}(x,A)$.
Moreover, if an assertion is true in both cases, we  suppress the superscripts ``Dunkl'' and ``DuBe''.
In both cases, the approach in Section 5 yields:

\begin{proposition}\label{schwacher Limes Dunkl Schritt 1}
Let $y_{x,w,t}$ be defined as in Lemma \ref{DGL y}. 
Furthermore, let $\epsilon>0$.
Then, locally uniformly in $x,w \in \R$ and $t \in \,]0,\infty[\,$,
\begin{align*}
P(||X^{kw}_{t/k,k,x}|-y_{x,w,t}| \geq \epsilon) 
\xrightarrow[k \to \infty]{} 0.
\end{align*}
\end{proposition}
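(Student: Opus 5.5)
We transfer the Laplace-method argument of Theorem \ref{schwacher Limes K-Schlange} to the kernels \eqref{kernels-Dunkl} and \eqref{kernels-Dunkl2}, now on all of $\R$ and with the event $\{||y|-y_{x,w,t}|\ge\epsilon\}$. We replace $t$ by $t/k$ and $w$ by $kw$, and write the densities in the form
$$\exp\bigl(-k(\widetilde H_{k,x,w,t}(y)+\widetilde r_{k,t})\bigr),$$
where
$$\widetilde H_{k,x,w,t}(y)=\frac{y^2}{2t}+\frac{x^2}{2t}+\frac{w^2t}{2}+\ln(2t)-1-\ln(y^2)-\frac1k\Bigl(\ln E_k\bigl(\tfrac{x}{\sqrt{t}},k\tfrac{y}{\sqrt t}\bigr)+\ln E_k(y,kw)-\ln E_k(x,kw)\Bigr)$$
in the Dunkl case. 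In the DuBe case the last two $E_k$ are replaced by $J_k$. The Stirling formula applied to $\Gamma(k+1/2)$ and $(2t/k)^{k+1/2}$ gives $\widetilde r_{k,t}\to0$ locally uniformly in $t>0$, exactly as in Section 5.

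\textbf{Step 1: the limit exponent.} By Proposition \ref{limit-Dunkl-kernel-1dim} for $E_k$, and by \eqref{limit-BN-functions-intro1} with $N=1$ for $J_k$, the quantities $\frac1k\ln E_k(a,kb)$ and $\frac1k\ln J_k(a,kb)$ converge locally uniformly to $S(ab)$. Here $S(z)=\ln 2+\sqrt{z^2+1}-1-\ln(\sqrt{z^2+1}+1)$. Two points need care.

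\emph{Negative arguments.} For $ab<0$ the right-hand side of \eqref{limit-dunklkernel1} still equals $e^{S(ab)}$, since $S$ is even. This is consistent because $f(t)=e^{abt}(1-t^2)$ is maximized at $t_0$ with the same value. Since $E_k(a,kb)\ge0$, this yields $\widetilde H_{k,x,w,t}(y)=H_{x,w,t}(y)+r_k(y)$ with $r_k\to0$ locally uniformly on $\R\setminus\{0\}$, and also near $y=0$ after taking logarithms of quantities bounded away from $0$.

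\emph{The limit function.} Here $H_{x,w,t}(y)=H_{|x|,|w|,t}(|y|)$ because $S$ is even. By Lemma \ref{Minimum H}, $H_{x,w,t}$ is therefore $\ge0$ on $\R$, with equality exactly at $y=\pm y_{|x|,|w|,t}$. The formula $y_{x,w,t}$ in Lemma \ref{DGL y} depends only on $x^2$ and $w^2$, and $S(xw)$ only on $|xw|$, so $y_{|x|,|w|,t}=y_{x,w,t}$. This explains why the statement is formulated for $|X|$.

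\textbf{Step 2: tightness.} We need an analogue of Lemma \ref{Abschaetzung H}. Use $|E_k(a,b)|\le e^{|a||b|}$, which follows directly from \eqref{int-rep-Dunkl-1dim} since the probability weight integrates to $1$. We also need a lower bound $E_k(x,kw)\ge c^k$ with $c>0$ locally uniform; this follows from Proposition \ref{limit-Dunkl-kernel-1dim}, since the limit is $\ge1$ (the point $t=0$ gives $f=1$), and it replaces the bound $J\ge1$ used before. With these two bounds, $\widetilde H_{k,x,w,t}(y)\ge G_{x,w,t}(y)-C$, uniformly for large $k$. Hence $\widetilde H_{k,x,w,t}(y)\to\infty$ as $|y|\to\infty$, which gives the analogue of Corollary \ref{H-Schlange y gross}.

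\textbf{Step 3: conclusion.} Proceed as in the proof of Theorem \ref{schwacher Limes K-Schlange}.
\begin{enumerate}
\item Choose a compact set $A=[-R,R]$ whose complement has mass $\le\delta/2$ uniformly on the compactum $U$.
\item By continuity and compactness, $\inf\{H_{x,w,t}(y):(x,w,t,y)\in U\times A,\ ||y|-y_{x,w,t}|\ge\epsilon\}>0$, using $\ln0=-\infty$ at $y=0$.
\item Together with the uniform convergence of the remainder, this makes the remaining mass $\le\delta/2$ for large $k$.
\end{enumerate}

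\textbf{Main obstacle.} The delicate part is the uniformity of $r_k\to0$ near $y=0$ and in the regions where $xy$ or $yw$ changes sign. The remedy is that $\ln E_k\ge$ a uniform negative multiple of $k$ on compacta; in Step 3 the singular factor $y^{2k}$, i.e. the term $-\ln y^2$, already dominates near $0$, so only an upper bound on $E_k$ is needed there, and that is uniform.
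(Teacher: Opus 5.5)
Your proposal is correct and follows essentially the same route as the paper. You rewrite the kernels as $\exp(-k(H_{x,w,t}+r))$ with the same limit function $H$, get tightness from an analogue of Lemma \ref{Abschaetzung H}, and use the evenness of $H$ in $y$ to obtain a positive infimum away from $\pm y_{x,w,t}$. The one difference is that you take the lower bound on $E_k(x,kw)$ from the locally uniform limit ($\ge1$) in Proposition \ref{limit-Dunkl-kernel-1dim} instead of the explicit estimate in Lemma \ref{Abschaetzung Dunklkern N=1}; this works equally well.
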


For the proof we need the following variant of Lemma \ref{Abschaetzung 2 Besselfunktion B}:

\begin{lemma}\label{Abschaetzung Dunklkern N=1}
For $k \geq 1$ and $x,y \in \R$, 
$\frac{1}{2^{k+1}\sqrt{\pi}} \leq \frac{e^{|xy|/2}-1}{2^{k} \sqrt{\pi} |xy|} \leq E_k(x,y) \leq e^{|xy|}$
where as a convention, the first inequality is an equality for $xy=0$.
\end{lemma}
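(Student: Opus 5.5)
The plan is to work directly with the integral representation \eqref{int-rep-Dunkl-1dim}. Since $E_k(x,y)$ depends only on $z:=xy$, put
$$c_k:=\frac{\Gamma(k+1/2)}{\Gamma(1/2)\Gamma(k)} \quad\text{and}\quad \omega_k(t):=(1-t)^{k-1}(1+t)^k\ge0 \quad\text{on } [-1,1],$$
so that $E_k(x,y)=c_k\int_{-1}^1 e^{zt}\omega_k(t)\,dt$. I would prove the three inequalities separately: the first is elementary, the third is a trivial majorization, and the middle one comes from restricting the integral to a suitable half-interval.

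\emph{Upper bound.} Since $E_k(0,y)=1$, the probability normalization $c_k\int_{-1}^1\omega_k(t)\,dt=1$ holds. (One can also check it via $\int_{-1}^1\omega_k=2^{2k}B(k,k+1)$ and the duplication formula.) Using $e^{zt}\le e^{|z|}$ for $t\in[-1,1]$ then gives $E_k(x,y)\le e^{|xy|}$ immediately.

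\emph{Middle inequality.} Since the integrand is nonnegative, I restrict to the half of $[-1,1]$ on which $zt\ge0$.
\begin{itemize}
\item For $z\ge0$, keep only $t\in[0,1/2]$. There $(1+t)^k\ge1$ and $(1-t)^{k-1}\ge 2^{1-k}$, using $k\ge1$. Hence
$$E_k\ge c_k2^{1-k}\int_0^{1/2}e^{zt}\,dt=c_k2^{1-k}\frac{e^{z/2}-1}{z}.$$
\item For $z<0$, substitute $t=-u$. The weight becomes $(1+u)^{k-1}(1-u)^k$, and on $u\in[0,1/2]$ the same argument gives the bound with $|z|$ in place of $z$.
\end{itemize}
It then remains to show $c_k2^{1-k}\ge 1/(2^k\sqrt\pi)$, i.e. $\Gamma(k+1/2)/\Gamma(k)\ge 1/2$ for $k\ge1$. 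The map $k\mapsto\ln\Gamma(k+1/2)-\ln\Gamma(k)$ is increasing, because the digamma function is increasing. At $k=1$ the ratio equals $\Gamma(3/2)=\sqrt\pi/2>1/2$, so the claim follows. This normalization-constant estimate is the only step needing a small special-function fact, and I expect it to be the main (though minor) obstacle.

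\emph{First inequality.} With $a=|xy|>0$, the elementary bound $e^{a/2}-1\ge a/2$ gives $(e^{a/2}-1)/a\ge 1/2$, which is exactly the first inequality. For $xy=0$ the middle term is read as its limit $1/(2^{k+1}\sqrt\pi)$, consistent with the stated convention. For the middle inequality at $z=0$, the restricted integral directly gives $E_k\ge c_k2^{1-k}\cdot\frac12$, which matches that limit value.
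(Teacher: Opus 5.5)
\paragraph{Verdict: gap in the case $xy<0$.}
Your upper bound, the first inequality, and the case $xy\ge 0$ of the middle inequality are correct. In the case $xy\ge0$ you track $\Gamma(k+1/2)/\Gamma(k)$ honestly and use the factor-$2$ slack, whereas the paper's displayed constant $\frac{1}{2^{k-1}\sqrt\pi}$ tacitly needs $\Gamma(k+1/2)\ge\Gamma(k)$. The gap is the case $xy<0$.

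After $t=-u$ the weight is $(1+u)^{k-1}(1-u)^k$. On $[0,1/2]$ it is now the factor with exponent $k$ that decays. The minimum of the weight there is $(3/2)^{k-1}2^{-k}$, and your crude bounds give only $2^{-k}$, not $2^{1-k}$; at $k=1$ the weight is $1-u<1=2^{1-k}$. So ``the same argument'' only yields $E_k(x,y)\ge c_k2^{-k}(e^{a/2}-1)/a$ with $a=|xy|$. The factor $2$ you relied on is gone, and you would need $\Gamma(k+1/2)\ge\Gamma(k)$. That fails for $1\le k\lesssim 1.22$; at $k=1$ the ratio is $\sqrt\pi/2$. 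The paper's proof of this case writes the constant $\frac{1}{2^k\sqrt\pi}$ directly, which tacitly requires the same inequality, so it has the same weakness.

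No sharper estimate on $[0,1/2]$ alone can fix this. For $k=1$ one has $c_1\int_0^{1/2}e^{au}(1-u)\,du\sim e^{a/2}/(4a)$ as $a\to\infty$, which lies below the target $e^{a/2}/(2\sqrt\pi\,a)$. The lemma itself is still true there, e.g.\ $E_1(x,y)=\sinh(a)/a^2-e^{-a}/a$ for $xy=-a$. The mass beyond $u=1/2$ is what you are missing.

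\paragraph{Repair.}
Also keep the range $u\in[1/2,1]$, where $e^{au}\ge e^{a/2}$ and $(1+u)^{k-1}\ge1$. This gives
\begin{align*}
E_k(x,y)&\ge c_k\Bigl(2^{-k}\,\frac{e^{a/2}-1}{a}+e^{a/2}\int_{1/2}^{1}(1-u)^k\,du\Bigr)
=\frac{c_k}{2^{k}}\Bigl(\frac{e^{a/2}-1}{a}+\frac{e^{a/2}}{2(k+1)}\Bigr)\\
&\ge \frac{\Gamma(k+1/2)}{\Gamma(k)}\cdot\frac{k+2}{k+1}\cdot\frac{e^{a/2}-1}{2^{k}\sqrt{\pi}\,a},
\end{align*}
where the last step uses $e^{a/2}\ge 2(e^{a/2}-1)/a$.

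The prefactor is at least $1$ for all $k\ge1$; it equals $3\sqrt\pi/4$ at $k=1$. In general, Gautschi's inequality gives $\Gamma(k+1/2)/\Gamma(k)\ge\sqrt{k-1/2}$. The function $\sqrt{k-1/2}\,\frac{k+2}{k+1}$ is increasing on $[1,\infty[$ and equals $3/(2\sqrt2)>1$ at $k=1$. With this, the case $xy<0$, and hence the whole lemma, holds for every $k\ge1$.
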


\begin{proof}
The mean value theorem implies the first inequality.
Furthermore, if $x y \geq 0$, the second inequality follows from \eqref{int-rep-Dunkl-1dim} and
\begin{align*}
E_k(x,y) \geq \frac{1}{2^{k-1}\sqrt{\pi}} \cdot \int\limits_0^{1/2} e^{xyt}dt = \frac{e^{xy/2}-1}{2^{k-1} \sqrt{\pi} xy} \geq \frac{e^{|xy|/2}-1}{2^{k} \sqrt{\pi} |xy|}.
\end{align*}
For $x y \le 0$, the second inequality follows from \eqref{int-rep-Dunkl-1dim} and
\begin{align*}
E_k(x,y) \geq \frac{1}{2^{k}\sqrt{\pi}} \cdot \int\limits_{-1/2}^{0} e^{xyt}dt = \frac{e^{|xy|/2}-1}{2^{k} \sqrt{\pi} |xy|}.
\end{align*}
Finally, for the third inequality, see Proposition 2.36 in \cite{R3}.
\end{proof}

\begin{proof}[Proof of Proposition \ref{schwacher Limes Dunkl Schritt 1}]
Let $F_k=E_k$ in the Dunkl case and $F_k=J_k$ in the hybrid Dunkl-Bessel case.
Moreover, let $H_{x,w,t}:\R \to 0$ be defined as in \eqref{K-Schlange} where $H_{x,w,t}(0)=\infty$ and $N=1$.
Then, analogously to \eqref{K-Schlange}, we rewrite
\begin{align}
&\>\>\>\>\>\>K_{t/k,k}^{kw}(x,A) \notag \\
&=\frac{e^{-kw^2 t/2}}{(2t/k)^{N \cdot (k + 1/2)} \cdot \Gamma(k + 1/2)}
\cdot \int\limits_A e^{-k(x^2+y^2)/(2t)} \cdot \frac{E_k(\frac{x}{\sqrt t},k\frac{y}{\sqrt{t}}) \cdot F_k(y,kw)}{F_k(x,kw)} \cdot y^{2k} dy \notag\\ 
&=\int\limits_A \exp( -k \cdot (  \widetilde{H}^{\star}_{k,x,w,t}(y) + \tilde{r}^{\star}_{k,t} ) ) dy 
=\int\limits_A \exp( -k \cdot ( H_{x,w,t}(y) + r^{\star}_{k,x,w,t}(y) ) ) dy \label{K-Schlange Dunkl}
\end{align} 
with
\begin{align*}
\widetilde{H}^{\star}_{k,x,w,t}(y) &\coloneqq \frac{y^2}{2t} - \ln(y^2)+\frac{x^2}{2t}+\frac{w^2t}{2}+\ln(2t)-1 \\ 
&\>\>\>\>\>\>\>-\frac{1}{k} \cdot \Big( \ln\Big( E_k \Big( \frac{x}{\sqrt{t}},k\frac{y}{\sqrt{t}} \Big) \Big) + \ln( F_k(w,ky)) - \ln( F_k(x,kw)) \Big).
\end{align*}
for suitable $\tilde{r}^{\star}_{k,t} \in \R$ and $r^{\star}_{k,x,w,t} \in C(C_N^B)$.
Then the Stirling formula, Equation \eqref{limit-BN-functions-intro1} in Theorem \ref{limit-BN-functions-intro},
Equation \eqref{limit-dunklkernel1} in Proposition \ref{limit-Dunkl-kernel-1dim}, and Lemma \ref{Abschaetzung Dunklkern N=1} imply
that locally uniformly w.r.t. $x,w \in \R$ and $t>0$, 
also $\tilde{r}^{\star}_{k,t} \to 0$ and $r^{\star}_{k,x,w,t} \to 0$.

Now, let $\delta, \epsilon>0$ and let $U \subset \R \times \R \times \,]0,\infty[\,$ be compact.
We mention that Corollary \ref{H-Schlange y gross} remains valid
after replacing $\widetilde{H}_{k,x,w,t}$, $k_1 \geq 0$, and $N \in \N$ by $\widetilde{H}^{\star}_{k,x,w,t}$, $k \geq 1$, and $N=1$, respectively.
The reason is that we can replace Lemma \ref{Abschaetzung 2 Besselfunktion B} by Lemma \ref{Abschaetzung Dunklkern N=1}
to prove an assertion being similar to Lemma \ref{Abschaetzung H}.
Therefore, analogous to \eqref{Kontrolle K-Schlange y gross}, we obtain a compactum $A \subset \R$ and a $k' \geq 1$ such that
for all $k \geq k'$ and $(x,w,t) \in U$,
$K_{t/k,k}^{k w}(x,\R \setminus A) \leq \delta/2.$
Furthermore, the symmetry of $H_{x,w,t}(y)$ in $y=0$ implies that \eqref{H>0} still holds here
after replacing $||y-y_{x,w,t}||$ by $||y|-y_{x,w,t}|$ and using the new $A$, $U$, and $\epsilon$.
Therefore, analogous to \eqref{Kontrolle K-Schlange y entfernt von Minimalstelle}, we obtain a $\widetilde{k} \geq k'$ 
such that for all $k \geq \widetilde{k}$ and $(x,w,t) \in U$, 
$K_{t/k,k}^{B,k w}(x,\{y \in A \, | \, ||y|-y_{x,w,t}| \geq \epsilon \}) \leq \delta/2.$
Finally, we obtain $K_{t/k,k}^{kw}(x,\{y \in \R \, : \, ||y|-y_{x,w,t}| \geq \epsilon\}) \leq \delta$
which proves the assertion.
\end{proof}

The next aim is to find weights $a_{x,w,t}, b_{x,w,t} \geq 0$ with $a_{x,w,t}+b_{x,w,t}=1$ and
\begin{align}\label{schwacher Limes Dunkl Schritt 2}
K_{t/k,k}^{kw}(x,.) \xrightarrow[k \to \infty]{w} a_{x,w,t} \cdot \delta_{y_{x,w,t}} + b_{x,w,t} \cdot \delta_{-y_{x,w,t}}
\end{align}
for all $x,w \in \R$ and $t \geq 0$ where we use the superscripts ``Dunkl'' and ``DuBe'' in our two cases.
For this, 
we compare $K_{t/k,k}^{kw}(x,\R_+)$ and $K_{t/k,k}^{kw}(x,\R_-)$ for $k \to \infty$:

\begin{proposition}\label{schwacher Limes Dunkl Schritt 3}
Let $h,\widetilde{h}:\R \to \R$ be defined as $h(z)=(\sqrt{z^2+1}-1)/z=z/(\sqrt{z^2+1}+1)$ and $\widetilde{h}(z)=(1+h(z))/(1-h(z))$. 
Let $y_{x,w,t}$ be  as above.
Then, locally uniformly w.r.t.~ $x,w \in \R$ and $t>0$,
\begin{align}
\frac{P(X^{Dunkl,kw}_{t/k,k,x} \geq 0)}{P(X^{Dunkl,kw}_{t/k,k,x} \leq 0)} 
&\xrightarrow[k \to \infty]{} \widetilde{h}(x y_{x,w,t} / t) \widetilde{h}(w y_{x,w,t}) \eqqcolon q^{Dunkl}_{x,w,t}, 
\label{q Dunkl} \\
\frac{P(X^{DuBe,kw}_{t/k,k,x} \geq 0)}{P(X^{DuBe,kw}_{t/k,k,x} \leq 0)} 
&\xrightarrow[k \to \infty]{} \widetilde{h}(x y_{x,w,t} / t) \eqqcolon q^{DuBe}_{x,w,t}.
\label{q DuBe}
\end{align}
\begin{proof}
Applying \eqref{Integralformel 1D Besselfunktion}, \eqref{int-rep-Dunkl-1dim}, and \eqref{K-Schlange Dunkl},
we can rewrite
\begin{align*}
\frac{K_{t/k,k}^{Dunkl,kw}(x,\R_+)}{K_{t/k,k}^{Dunkl,kw}(x,\R_-)}
&= \frac
{\int\limits_0^{\infty} \int\limits_{-1}^1 \int\limits_{-1}^1 f(x,w,t,y,u,v)^k \widetilde{f}(u,v) \>dv \>du \>dy}
{\int\limits_0^{\infty} \int\limits_{-1}^1 \int\limits_{-1}^1 f(x,w,t,y,u,v)^k f^{\star}(u,v) \>dv \>du \>dy}
\end{align*}
with functions $f: \R \times \R \times]0,\infty[ \times [0,\infty[ \times [-1,1] \times [-1,1] \to \R$ and $\widetilde{f}, f^{\star}:[-1,1]^2 \to \R$ defined as
\begin{align*}
f(x,w,t,y,u,v)&=e^{-(x^2+y^2)/(2t)} y^2 \cdot e^{xyu/t} (1-u^2) \cdot e^{wyv} (1-v^2),  \\
\widetilde{f}^{Dunkl}(u,v)&=(1-u)^{-1} (1-v)^{-1}, \; \mkern11.6mu \widetilde{f}^{DuBe}(u,v)=(1-u)^{-1} (1-v^2)^{-1}, \\
f^{\star,Dunkl}(u,v)&=(1+u)^{-1} (1+v)^{-1}, \; f^{\star,DuBe}(u,v)=(1+u)^{-1} (1-v^2)^{-1}.
\end{align*}
Now, the arguments in the proofs of Lemma \ref{Minimum H} and Proposition \ref{limit-Dunkl-kernel-1dim} imply 
that $f(x,w,t,.,.,.)$ has a unique maximum in $(y,u,v)=(y_{x,w,t},h(xy_{x,w,t}/t),h(wy_{x,w,t}))$.
Therefore Lemma \ref{Laplace-method-fraction} and Proposition \ref{schwacher Limes Dunkl Schritt 1} immediately lead to the claim.
\end{proof}
\end{proposition}

 For  our final result,
 we recall that the weak convergence on $\mathbb R$ is metrized by the Prohorov metric $d_P:M_1(\R) \times M_1(\R) \to [0,\infty]$;
$d_P(\mu,\nu)=\inf\{\epsilon>0 \, : \, \mu(B) \leq \nu(B^{\epsilon}) + \epsilon \>\> \forall B \in \mathcal{B}(\R) \}$
with $B^{\epsilon} \coloneqq \{x \in \R \, : \, d(x,B) < \epsilon \}$ and the Borel-$\sigma$-algebra $\mathcal{B}(\R)$.
See e.g. Chapter 13 in \cite{Kl}.

\begin{theorem}
Let $y_{x,w,t}$ be defined as in Lemma \ref{DGL y} and
 $q_{x,w,t}$  as in \eqref{q Dunkl} or \eqref{q DuBe} respectively.
Then, the weak limit assertion \eqref{schwacher Limes Dunkl Schritt 2} holds locally uniformly w.r.t. $x,w \in \R$ and $t>0$ for
$a_{x,w,t}:=q_{x,w,t}/(1+q_{x,w,t})$ and $b_{x,w,t}:=1/(1+q_{x,w,t})$.
This means that for all compacta $U \subset \R \times \R \times ]0,\infty[$,
\begin{align}
\sup\limits_{(x,w,t) \in U} d_P(K_{t/k,k}^{k w}(x,.),a_{x,w,t} \cdot \delta_{y_{x,w,t}} + b_{x,w,t} \cdot \delta_{-y_{x,w,t}})
\xrightarrow[k \to \infty]{} 0. \label{schwacher Limes Dunkl Schritt 6}
\end{align}
\end{theorem}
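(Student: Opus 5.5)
The plan is to combine the two preceding propositions with an elementary estimate for the Prohorov distance. Fix a compactum $U\subset\R\times\R\times]0,\infty[$ and $\epsilon>0$, and write $\mu_k:=K_{t/k,k}^{kw}(x,.)$ and $\nu:=a_{x,w,t}\delta_{y_{x,w,t}}+b_{x,w,t}\delta_{-y_{x,w,t}}$.

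\textbf{Step 1: concentration of $\mu_k$.} By Proposition \ref{schwacher Limes Dunkl Schritt 1}, uniformly on $U$, the mass of $\mu_k$ outside $B_+\cup B_-$ tends to $0$, where $B_\pm:=\,]\pm y_{x,w,t}-\epsilon,\pm y_{x,w,t}+\epsilon[$. I will treat two regimes separately.

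\textbf{Step 2: the regime $y_{x,w,t}>\epsilon$.} Here $B_+\subset\,]0,\infty[$ and $B_-\subset\,]-\infty,0[$ are disjoint. Since $y_{x,w,t}\ge\sqrt{2t}$ and $t$ lies in a compact subset of $]0,\infty[$, the function $y_{x,w,t}$ is bounded below on $U$; shrinking $\epsilon$ if necessary, I may assume this regime holds on all of $U$. Proposition \ref{schwacher Limes Dunkl Schritt 3} gives
\[
\frac{\mu_k([0,\infty[)}{\mu_k(]-\infty,0])}\to q_{x,w,t}
\]
uniformly on $U$. The functions $\widetilde h$ and $y_{x,w,t}$ are continuous, so $q_{x,w,t}$ is continuous and positive on $U$, hence bounded away from $0$ and $\infty$. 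Note that $\mu_k(\{0\})=0$ and the two masses sum to $1$. It follows that $\mu_k([0,\infty[)\to a_{x,w,t}$ and $\mu_k(]-\infty,0])\to b_{x,w,t}$ uniformly. Combining this with Step 1 yields $\mu_k(B_+)\to a_{x,w,t}$ and $\mu_k(B_-)\to b_{x,w,t}$ uniformly on $U$.

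\textbf{Step 3: the Prohorov estimate.} For any Borel set $B$, each of $\pm y$ lies in $B^{\epsilon}$ whenever $B$ meets $B_\pm$. Therefore
\[
\mu_k(B)\le \nu(B^{\epsilon})+\mu_k(\R\setminus(B_+\cup B_-))+|\mu_k(B_+)-a|+|\mu_k(B_-)-b|,
\]
and for large $k$ the right-hand side is at most $\nu(B^\epsilon)+\epsilon$, uniformly on $U$. This gives $d_P(\mu_k,\nu)\le\epsilon$, which is \eqref{schwacher Limes Dunkl Schritt 6}.

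\textbf{Main obstacle.} The crux is the uniformity in Step 2: converting the ratio limit of Proposition \ref{schwacher Limes Dunkl Schritt 3} into limits of the individual masses. This requires $q_{x,w,t}$ to be bounded away from $0$ and $\infty$ on $U$. That holds because $|h|<1$ implies $\widetilde h\in\,]0,\infty[$ and depends continuously on its argument, so its arguments, ranging over a compact set, keep $\widetilde h$ inside a compact subinterval of $]0,\infty[$.
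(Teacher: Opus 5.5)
Your proposal is correct and follows the paper's proof. You convert the ratio limit of Proposition \ref{schwacher Limes Dunkl Schritt 3} into uniform limits of $\mu_k([0,\infty[)$ and $\mu_k(]-\infty,0])$, combine these with the concentration in Proposition \ref{schwacher Limes Dunkl Schritt 1} to get the masses near $\pm y_{x,w,t}$, and finish with an elementary Prohorov estimate, which you spell out in more detail than the paper does. Your stated ``main obstacle'' is not actually one: $z\mapsto z/(1+z)$ and $z\mapsto 1/(1+z)$ are $1$-Lipschitz on $[0,\infty[$, which is what the paper uses, so uniform convergence of the ratio already gives uniform convergence of the masses without $q_{x,w,t}$ being bounded away from $0$ and $\infty$.
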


\begin{proof}
Let $U$ be as in the assumptions.
 Proposition \ref{schwacher Limes Dunkl Schritt 3} and the Lipschitz continuity of the functions $z \mapsto z/(1+z)$ and $z \mapsto 1/(1+z)$ imply 
that
\begin{align}
\sup\limits_{(x,w,t) \in U} |K_{t/k,k}^{kw}(x,\R_+)-a_{x,w,t}| 
\xrightarrow[k \to \infty]{} 0, 
\>\>\>\>\>
\sup\limits_{(x,w,t) \in U} |K_{t/k,k}^{kw}(x,\R_-)-b_{x,w,t}| 
\xrightarrow[k \to \infty]{} 0. 
\label{schwacher Limes Dunkl Schritt 4}
\end{align}
Now, let $\epsilon>0$. Then, Proposition \ref{schwacher Limes Dunkl Schritt 1} and \eqref{schwacher Limes Dunkl Schritt 4} yield
\begin{align}
\sup\limits_{(x,w,t) \in U} |K_{t/k,k}^{kw}(x,\{y \in \R \, : \, |y-y_{x,w,t}| < \epsilon\}) - a_{x,w,t}|
&\xrightarrow[k \to \infty]{} 0, 
\label{schwacher Limes Dunkl Schritt 5.1} \\
\sup\limits_{(x,w,t) \in U} |K_{t/k,k}^{kw}(x,\{y \in \R \, : \, |y+y_{x,w,t}| < \epsilon\})  - b_{x,w,t}|
&\xrightarrow[k \to \infty]{} 0.
\label{schwacher Limes Dunkl Schritt 5.2}
\end{align}
Furthermore, Proposition \ref{schwacher Limes Dunkl Schritt 1}, \eqref{schwacher Limes Dunkl Schritt 5.1}, \eqref{schwacher Limes Dunkl Schritt 5.2}, 
and elementary calculus show that
\begin{align*}
\limsup\limits_{k \to \infty} \sup\limits_{(x,w,t) \in U} d_P(K_{t/k,k}^{k w}(x,.),a_{x,w,t} \cdot \delta_{y_{x,w,t}} + b_{x,w,t} \cdot \delta_{-y_{x,w,t}}) \leq \epsilon.
\end{align*}
As $\epsilon$ can be chosen arbitrarily small, the claim is proved.
\end{proof}

\section*{Declaration of the use of artificial intelligence} During the preparation for this work, the authors used the ChatGPT4 and 5 model families in order
to search for literature and to perform calculations in order to  verify some mathematical statements and to
optimize some proofs.
The authors verified the literature and corrected, and edited these  calculations and  proofs,
and they formulated all  statements and proofs in own words.
They take full responsibility for this work.

\end{document}